\definecolor{gr}{rgb}   {0.,   0.69,   0.23 }
\definecolor{bl}{rgb}   {0.,   0.5,   1. }
\definecolor{mg}{rgb}   {0.85,  0.,    0.85}
\definecolor{yl}{rgb}   {0.8,  0.7,   0.}
\definecolor{or}{rgb}  {0.7,0.2,0.2}
\tikzset{
	ddot/.style={circle,fill=white,draw=black,inner sep=0pt,minimum size=0.8mm},
	>=stealth,
	}
\tikzset{
	ddot2/.style={circle,fill=black,draw=black,inner sep=0pt,minimum size=0.8mm},
	>=stealth,
	}
\newtheorem{theorem}{Theorem} [section]
\newtheorem{lemma}[theorem]{Lemma}
\newtheorem{proposition}[theorem]{Proposition}
\newtheorem{remark}[theorem]{Remark}
\newtheorem{corollary}[theorem]{Corollary}
\newcommand{\I}{\mathcal{I}}
\newcommand{\noi}{\noindent}
\newcommand{\Z}{\mathbb{Z}}
\newcommand{\R}{\mathbb{R}}
\newcommand{\T}{\mathbb{T}}
\let\Re=\undefined\DeclareMathOperator*{\Re}{Re}
\let\Im=\undefined\DeclareMathOperator*{\Im}{Im}
\let\P= \undefined
\newcommand{\P}{\mathbf{P}}
\newcommand{\F}{\mathcal{F}}
\newcommand{\al}{\alpha}
\newcommand{\dl}{\delta}
\newcommand{\nb}{\nabla}
\newcommand{\Dl}{\Delta}
\newcommand{\eps}{\varepsilon}
\newcommand{\g}{\gamma}
\newcommand{\G}{\Gamma}
\newcommand{\ld}{\lambda}
\newcommand{\s}{\sigma}
\newcommand{\ft}{\widehat}
\newcommand{\wt}{\widetilde}
\newcommand{\cj}{\overline}
\newcommand{\dt}{\partial_t}
\newcommand{\ta}{\theta}
\renewcommand{\l}{\ell}
\renewcommand{\o}{\omega}
\renewcommand{\O}{\Omega}
\newcommand{\les}{\lesssim}
\newcommand{\jb}[1]
{\langle #1 \rangle}
\newcommand{\jbb}[1]
{[\hspace{-0.6mm}[ #1 ]\hspace{-0.6mm}]}
\renewcommand{\b}{\beta}
\newcommand{\ind}{\mathbf 1}
\newcommand{\N}{\mathbb{N}}
\renewcommand{\H}{\mathcal{H}}
\newtheorem*{ackno}{Acknowledgements}
\numberwithin{equation}{section}
\numberwithin{theorem}{section}
\newcommand{\pe}{\mathbin{\scaleobj{0.7}{\tikz \draw (0,0) node[shape=circle,draw,inner sep=0pt,minimum size=8.5pt] {\scriptsize  $=$};}}}
\newcommand{\pl}{\mathbin{\scaleobj{0.7}{\tikz \draw (0,0) node[shape=circle,draw,inner sep=0pt,minimum size=8.5pt] {\scriptsize $<$};}}}
\newcommand{\pg}{\mathbin{\scaleobj{0.7}{\tikz \draw (0,0) node[shape=circle,draw,inner sep=0pt,minimum size=8.5pt] {\scriptsize $>$};}}}
\begin{document}
\baselineskip = 14pt

\title[GWP of 2-$d$ stochastic viscous NLW]{Global well-posedness of the two-dimensional stochastic viscous nonlinear wave equations}

\author[R.~Liu]{Ruoyuan Liu}

\address{
Ruoyuan Liu,  School of Mathematics\\
The University of Edinburgh\\
and The Maxwell Institute for the Mathematical Sciences\\
James Clerk Maxwell Building\\
The King's Buildings\\
Peter Guthrie Tait Road\\
Edinburgh\\ 
EH9 3FD\\
United Kingdom}

\email{ruoyuan.liu@ed.ac.uk}

\begin{abstract}
We study well-posedness of viscous nonlinear wave equations (vNLW) on the two-dimensional torus with  
a 
stochastic forcing. In particular, we prove pathwise global well-posedness of the stochastic defocusing vNLW with an additive stochastic forcing $D^\al \xi$, where $\al < \frac 12$ and $\xi$ denotes the space-time white noise.

\end{abstract}

\date{\today}

\keywords{viscous nonlinear wave equation; 
stochastic viscous nonlinear wave equation; global well-posedness}

\subjclass[2020]{35L05, 35L71, 35R60, 60H15}

\maketitle

\tableofcontents

\newpage
\section{Introduction}
\label{SEC:1}

\subsection{Viscous nonlinear wave equations}

In this paper, we consider the following nonlinear wave equation (NLW) on 
the two-dimensional torus
$\T^2 = (\R/\Z)^2$, augmented by viscous effects:
\begin{equation}
\begin{cases}
\dt^2 u + (1-\Dl) u + D \dt u + |u|^{p-1}u = D^\al \xi \\
(u, \dt u)|_{t=0} = (u_0, u_1),
\end{cases}
\label{SvNLW}
\end{equation}

\noi
where $p > 1$, $D = |\nb| = \sqrt{-\Dl}$, $\al < \frac 12$, and $\xi$ denotes the (Gaussian) space-time white noise on $\R_+ \times \T^2$. Our main goal in this paper is to prove pathwise global well-posedness of \eqref{SvNLW} in $C(\R_+; H^s (\T^2))$ for some $\al \le \al_p$ and $s \geq s_p$, where $H^s(\T^2)$ is the $L^2$-based Sobolev space on $\T^2$ with regularity $s$ (see Section \ref{SEC:2} for more details).

In \cite{KC1}, Kuan-\v{C}ani\'{c} 
proposed the following viscous NLW on $\R^2$:
\begin{equation}
\dt^2 u - \Dl u + 2\mu D \dt u = F(u),
\label{vNLW_gen}
\end{equation}

\noi
where $\mu > 0$ and $F(u)$ is a general external forcing. This equation typically shows up in fluid-structure interaction problems, such as the interaction between a stretched membrane and a viscous fluid. 
The viscosity term $2\mu D \dt u$ in \eqref{vNLW_gen} comes from the Dirichlet-Neumann operator typically arising in fluid-structure interaction
problems in three dimensions.
See \cite{KC1} and \cite{CKO} for  the derivation of \eqref{vNLW_gen}.
It is easy to see that, when $\mu \ge 1$,  the equation \eqref{vNLW_gen} is purely parabolic (see \cite{CKO, LO}).
On the other hand, when $0 < \mu < 1$,  
the viscous NLW \eqref{vNLW_gen} exhibits an interesting mixture of
dispersive effects and parabolic smoothing.
Since the precise value of $0 < \mu < 1$ does not play an important role, 
we simply set $\mu = \frac 12$. In addition, we consider a defocusing power-type nonlinearity of the form
\[ F(u) = -|u|^{p-1}u, \]
for positive real numbers $p > 1$. This power-type nonlinearity has been studied extensively for nonlinear dispersive equations (see, for example, \cite{TAO}). With $\mu = \frac 12$ and $F(u) = -|u|^{p-1}u$, the general form of vNLW \eqref{vNLW_gen} becomes the following version of vNLW:
\begin{equation}
\dt^2 u - \Dl u + D \dt u + |u|^{p-1}u = 0
\label{vNLW1}
\end{equation}

We now consider the analytical aspects of vNLW \eqref{vNLW1}. Note that as in the case of the usual NLW:
\begin{equation*}
\dt^2 u - \Dl u + |u|^{p-1}u = 0,
\end{equation*}
the viscous NLW \eqref{vNLW1} on $\R^2$ enjoys the following scaling symmetry: $u(t,x) \mapsto u_\ld (t, x) := \ld^{\frac{2}{p-1}} u(\ld t, \ld x)$. Namely, if $u$ is a solution to \eqref{vNLW1}, then $u_\ld$ is also a solution to \eqref{vNLW1} for any $\ld > 0$ with rescaled initial data. This scaling symmetry induces the scaling critical Sobolev regularity $s_{\text{scaling}}$ on $\R^2$ given by 
\[ s_{\text{scaling}} = 1 - \frac{2}{p-1} \]
such that under this scaling symmetry, the homogeneous Sobolev norm on $\R^2$ remains invariant. While there is no scaling symmetry on $\T^2$, the scaling critical regularity $s_{\text{scaling}}$ still plays an important role in studying nonlinear partial differential equations in the periodic setting, especially for dispersive equations. Namely, in both periodic and non-periodic settings, a dispersive equation is usually well-posed in $H^s$ for $s > s_{\text{scaling}}$ and is usually ill-posed in $H^s$ for $s < s_{\text{scaling}}$. On the one hand, there is a good local well-posedness theory for dispersive equations above the scaling regularities (see \cite{BOP2, Oh, Poc17} for the references therein). Moreover, we show in this paper that vNLW \eqref{vNLW1} is locally well-posed in $H^s(\T^2)$ for all $s \geq s_{\text{crit}}$ (with a strict inequality when $p = 3$), where $s_{\text{crit}}$ is defined by
\begin{equation}
s_{\text{crit}} := \max(s_{\text{scaling}}, 0) = \max\bigg( 1-\frac{2}{p-1}, 0 \bigg),
\label{scrit1}
\end{equation}

\noi
for a given $p > 1$. See Appendix \ref{SEC:LWP3}. Here, the second regularity restriction 0 is required to make sense of powers of $u$. On the other hand, many dispersive equations are known to be ill-posed below the scaling critical regularity. Among these ill-posedness results, many of them are in the form of {\it norm inflation} (see \cite{CCT, BT08, CK, Kish19, Oh, OW, CP, Oka, Tz, OOTz, FO}), which is a stronger notion of ill-posedness. In \cite{KC1}, Kuan-\v{C}ani\'{c} proved norm inflation for vNLW \eqref{vNLW1} in $\H^s(\R^d)$ for $0 < s < s_{\text{scaling}}$ and any odd integers $p \geq 3$. Moreover, they pointed out that the viscous term has the potential to slow down the growth of the $H^s$ norm, i.e. to slow down the speed of the norm inflation. For details, see \cite{KC1}. Also, it is of interest to see if norm inflation for vNLW holds in negative Sobolev spaces. See \cite{dO}.

Let us now turn our attention to the viscous NLW with a stochastic forcing.
  In \cite{KC2}, Kuan-\v{C}ani\'{c} studied the stochastic viscous wave equation with a multiplicative noise on $\R^d$, $d = 1, 2$:
\[\dt^2 u - \Dl u + D \dt u = f(u) \xi, \]
where $f$ is Lipschitz and $\xi$ is the (Gaussian) space-time white noise on $\R_+ \times \R^2$.
In \cite{LO}, T. Oh and the author studied (the renormalized version of) 
SvNLW \eqref{SvNLW} with $\al = \frac 12$.
When $\al = \frac 12$, the solution is not a function but is only a distribution
and thus a renormalization on the nonlinearity is required to give a proper meaning to the dynamics
(which in particular forces us to consider $|u|^{p-1}u$ only for $p \in 2 \N + 1$ or $u^k$
for an integer  $k \geq 2$).
See \cite{LO} for details.
In the cubic case, we proved pathwise global well-posedness.
For an odd integer $p \geq 5$, 
we also used an invariant measure argument to prove almost sure global well-posedness
with suitable random initial data.\footnote{Strictly speaking, 
almost sure global well-posedness holds for the noise
$\sqrt 2 D^\frac 12\xi$, which makes the Gibbs measure for the standard NLW
invariant under the SvNLW dynamics. For pathwise global well-posedness, 
a precise coefficient in front of the noise $D^\frac{1}{2} \xi$ does not play any role.}
In this paper, our goal is to investigate further well-posedness of SvNLW \eqref{SvNLW} with an additive forcing $D^\al \xi$
and, in particular, 
prove
 pathwise global well-posedness for any $p > 1$, where the range of $\al < \frac 12$ depends on the degree $p >1$ of the nonlinearity.

%
%
%

\subsection{SvNLW with an additive stochastic forcing}
\label{SUBSEC:Sv}

Let us first consider SvNLW \eqref{SvNLW}.
 We say that $u$ is a solution to SvNLW \eqref{SvNLW} if $u$ satisfies the following Duhamel formulation of \eqref{SvNLW}:
\begin{align}
u(t) = V(t) (u_0, u_1) - \int_0^t S(t-t') \big( |u|^{p-1} u \big) (t') dt' + \Psi.
\label{Duh}
\end{align}

\noi
Here, $V(t)$ is the linear propagator defined by
\begin{equation}
V(t) (u_0, u_1) = e^{-\frac{D}{2} t} \bigg( \cos(t\jbb{D}) + \frac{D}{2\jbb{D}} \sin(t \jbb{D}) \bigg) u_0 + e^{-\frac{D}{2}t} \frac{\sin(t\jbb{D})}{\jbb{D}} u_1
\label{defV}
\end{equation}

\noi
and 
$S(t)$ is defined by
\begin{equation}
S(t) = e^{-\frac{D}{2}t} \frac{\sin(t \jbb{D})}{\jbb{D}},
\label{defS}
\end{equation}

\noi
where
\[ \jbb{D} = \sqrt{1-\tfrac{3}{4}\Dl}, \]

\noi
and $\Psi$ denotes the stochastic convolution defined by 
\begin{equation}
\Psi := \Psi_\al = \int_0^t S(t-t') D^\al \xi (dt').
\label{Psi}
\end{equation}

\noi
A standard argument shows that 
$\Psi$ belongs to $C(\R_+; W^{\frac 12 - \al - \eps, \infty} (\T^2))$ almost surely, where $\eps > 0$ is sufficiently small; see Lemma \ref{LEM:regPsi} below. 
In particular, when  $\al < \frac 12$, $\Psi$ is a well-defined function on $\R_+ \times \T^2$.

We first state a local well-posedness result for SvNLW \eqref{SvNLW}.

\begin{theorem}\label{THM:LWP1}
Let $p > 1$ and $\al < \frac 12$. Define $q$, $r$, and $\s$ as follows.
\begin{itemize}
\item[(i)] When $1 < p < 2$, set $q = 2+\dl$, $r = \frac{4+2\dl}{1+\dl}$, and $\s = 0$, for some sufficiently small $\dl > 0$.
\item[(ii)] When $p \geq 2$, set $q = p + \dl$, $r = 2p$, and $\s = 1 - \frac{1}{p+\dl} - \frac 1p$ for some arbitrary $\dl > 0$.
\end{itemize}

\noi
Let $s \geq \s$. Then, SvNLW \eqref{SvNLW} is pathwise locally well-posed in $\H^s(\T^2)$. More precisely, given any $(u_0, u_1) \in \H^s(\T^2)$, there exists $T = T_\o(u_0, u_1)$ (which is positive almost surely) and a unique solution $u$ to \eqref{SvNLW} with $(u, \dt u)|_{t=0} = (u_0, u_1)$ in the class
\[ \Psi + C([0,T]; H^\s(\T^2)) \cap L^q([0,T]; L^r(\T^2)). \]
\end{theorem}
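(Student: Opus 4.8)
The plan is to solve for $u$ in the stated class by Banach's fixed point theorem after the standard first-order expansion. Write $u = \Psi + v$ with $\Psi = \Psi_\al$ the stochastic convolution in \eqref{Psi}; since $\Psi|_{t=0} = \dt\Psi|_{t=0} = 0$, the Duhamel formulation of \eqref{SvNLW} is equivalent to the fixed point problem
\[
v = \Gamma v := V(t)(u_0,u_1) - \int_0^t S(t-t')\big(|v+\Psi|^{p-1}(v+\Psi)\big)(t')\,dt',
\]
so it suffices to produce, for some $T = T_\o > 0$, a unique fixed point of $\Gamma$ in a ball of
\[
X_T := C([0,T]; H^\s(\T^2)) \cap L^q([0,T]; L^r(\T^2)).
\]
By Lemma \ref{LEM:regPsi} the random constant $A_\o := \|\Psi\|_{C([0,1]; W^{\frac12-\al-\eps,\infty}(\T^2))}$ is almost surely finite for $\eps > 0$ small; this is the point at which $\al < \frac12$ is used, so that $\Psi$ is a bounded function in space-time on $[0,1]$. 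All estimates below are pathwise, with $T$ allowed to depend on $A_\o$ and on $\|(u_0,u_1)\|_{\H^s}$.

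Next I would record the \textbf{linear estimates} for the damped propagators \eqref{defV} and \eqref{defS}. The essential structural input is the parabolic smoothing factor $e^{-\frac{D}{2}t}$, which, in addition to the oscillation of $e^{\pm i t\jbb{D}}$, produces $L^a_x \to L^b_x$ improvements carrying a time-integrable singularity; combining these yields Strichartz-type bounds for $V(t)$ and $S(t)$ at the (non-sharp-admissible) pairs $(q,r)$ of (i)--(ii), for which one checks that $\s = 1 - \frac1q - \frac2r$ is exactly the associated Strichartz regularity. Concretely I expect: \emph{(a)} the homogeneous bound $\|V(t)(u_0,u_1)\|_{X_T} \les \|(u_0,u_1)\|_{\H^\s} \les \|(u_0,u_1)\|_{\H^s}$ for $s \ge \s$, whose $C([0,T];H^\s)$ component is immediate from the boundedness of $e^{-\frac{D}{2}t}$, $\cos(t\jbb{D})$, $\frac{D}{2\jbb{D}}\sin(t\jbb{D})$ and the one-derivative gain of $\frac{1}{\jbb{D}}$, and whose $L^q_TL^r_x$ component is the damped-wave Strichartz estimate; and \emph{(b)} the inhomogeneous bound $\big\|\int_0^t S(t-t')F(t')\,dt'\big\|_{X_T} \les T^\theta\|F\|_{L^{q_1}([0,T];L^2(\T^2))}$ for some $\theta > 0$ and a dual exponent $1 < q_1 < q$, whose $C([0,T];H^\s)$ component follows from $\|S(s)\|_{L^2 \to H^\s} \les 1$ (valid since $\s \le 1$) together with Hölder in time, and whose $L^q_TL^r_x$ component is again the smoothing/Strichartz input. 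These I would either quote from the linear-estimates section or derive from the Poisson-semigroup smoothing $\|e^{-sD}g\|_{L^b} \les s^{-2(\frac1a - \frac1b)}\|g\|_{L^a}$ combined with the dispersive decay of $e^{\pm i s\jbb{D}}$.

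For the \textbf{nonlinear estimate}, on a ball $B_R \subset X_T$ of radius $R$ I would use the elementary pointwise inequality $\big||a|^{p-1}a - |b|^{p-1}b\big| \les (|a|^{p-1} + |b|^{p-1})|a-b|$, valid for every $p > 1$ (this sidesteps the lack of smoothness of $z \mapsto |z|^{p-1}z$ when $p \notin 2\N+1$), followed by Hölder: the exponents are arranged so that the $p$ factors coming from $L^r_x$ land in some $L^{r_1}_x$ with $L^{r_1}(\T^2) \embeds L^2(\T^2)$, and the $p$ factors from $L^q_t$ land in $L^{q_1}_t$, while the contributions of $\Psi$ are absorbed through $A_\o \ges \|\Psi\|_{L^\infty_{t,x}([0,T]\times\T^2)}$. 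This gives
\[
\big\| |v+\Psi|^{p-1}(v+\Psi) - |w+\Psi|^{p-1}(w+\Psi)\big\|_{L^{q_1}_TL^2_x} \les (R + A_\o)^{p-1}\|v-w\|_{X_T}
\]
for $v,w \in B_R$, and, taking $w = 0$, a matching bound for $\Gamma v$ itself. Feeding these into the linear estimates, choosing $R \sim \|(u_0,u_1)\|_{\H^s} + 1$ and then $T = T_\o > 0$ small enough that $C\,T^\theta (R + A_\o)^{p-1} < \frac12$, makes $\Gamma$ a contraction self-map on $B_R$; Banach's fixed point theorem yields the unique $v \in X_T$, hence the solution $u = \Psi + v$ in the stated class, with continuity in $H^\s$ contained in the $C([0,T];H^\s)$ component and uniqueness upgraded from the ball to the full class by the usual continuity argument.

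The step I expect to be the main obstacle is \emph{(b)}, the inhomogeneous estimate at the non-admissible pair $(q,r)$ \emph{with} a strictly positive power $T^\theta$. In two dimensions there is no admissible wave-Strichartz pair near the relevant $(q,r)$, so the damping $e^{-\frac{D}{2}t}$ must be exploited genuinely and not merely as a harmless bounded factor; the delicate point is to check that, after pairing the resulting time-singular kernel against the $L^{q_1}_t L^2_x$ data (via a Hardy--Littlewood--Sobolev / Young-in-time argument), the time exponents close with $\theta > 0$ for \emph{all} $p > 1$ — which is precisely why $\dl$ must be taken small when $1 < p < 2$ and what pins down the value of $\s$ in (ii). The remaining ingredients — the homogeneous estimate, the pointwise nonlinear inequality, the Hölder bookkeeping, and the contraction — are routine once these linear estimates are in place.
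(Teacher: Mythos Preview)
Your proposal is correct and follows essentially the same route as the paper: first-order expansion $u = \Psi + v$, homogeneous Strichartz for $V(t)$ at the stated $(q,r,\s)$, an inhomogeneous bound for the Duhamel term, and a contraction via the pointwise inequality $\big||a|^{p-1}a - |b|^{p-1}b\big| \les (|a|^{p-1}+|b|^{p-1})|a-b|$ (the paper phrases this last step through the fundamental theorem of calculus $F(a)-F(b)=\int_0^1 F'(b+\tau(a-b))(a-b)\,d\tau$, which is equivalent). Two small recalibrations are worth making. First, the step you flag as the main obstacle, the inhomogeneous estimate (b), is in fact an immediate consequence of (a): by Minkowski in time one writes $\int_0^t S(t-t')F(t')\,dt'$ as a $t'$-integral of linear solutions with data $(0,F(t'))$ and applies the homogeneous bound pointwise in $t'$, landing on $\|F\|_{L^1_T L^2_x}$; the $T^\theta$ then comes afterwards from H\"older since $p\cdot\frac1q < 1$ (respectively $p\cdot\frac1{q} < 1$ in case (i) for $\dl$ small). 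Second, the dispersive decay of $e^{\pm it\jbb{D}}$ is not used at all: in the $TT^*$ argument for (a) the oscillatory factors are handled merely as $L^2$-bounded multipliers, and all of the $L^{r'}\to L^r$ gain comes from the Poisson smoothing $\|e^{-sD}g\|_{L^b} \les s^{-d(\frac1a-\frac1b)}\|g\|_{L^a}$ together with the elementary bound $t+t' \ge |t-t'|$ and Hardy--Littlewood--Sobolev in time.
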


We present the proof of Theorem \ref{THM:LWP1} in Section \ref{SEC:LWP1}. The proof of Theorem \ref{THM:LWP1} is based on the following first order expansion (\cite{McKean, BO96, DPD}):
\begin{equation}
u = v + \Psi,
\label{exp1}
\end{equation}

\noi
where the residual term $v$ satisfies the following equation:
\begin{equation}
\begin{cases}
\dt^2 v + (1-\Dl) v + D\dt v + |v + \Psi|^{p-1}(v + \Psi) = 0 \\
(v, \dt v) |_{t=0} = (u_0, u_1).
\end{cases}
\label{SvNLW_res}
\end{equation}

\noi
See Proposition \ref{PROP:LWP1} for the pathwise local well-posedness result at the level of the residual term $v$ using the homogeneous Strichartz estimates for the viscous wave equation (Lemma \ref{LEM:hom_str}).

The main idea of the proof of pathwise local well-posedness of SvNLW \eqref{SvNLW} comes from \cite{CKO}. Note that the nonlinearity $|u|^{p-1}u$ in SvNLW \eqref{SvNLW} is not necessarily algebraic for general $p>1$, which creates a difficulty for obtaining the difference estimate when applying the contraction argument. To deal with this issue, we apply the idea from Oh-Okamoto-Pocovnicu \cite{OOP} using the fundamental theorem of calculus.

\begin{remark} \rm
(i) Using the same argument, the proof of Theorem \ref{THM:LWP1} works for both the defocusing case (with the nonlinearity $|u|^{p-1}u$) and the focusing case (with the nonlinearity $-|u|^{p-1}u$, i.e. with the negative sign).

The proof of Theorem \ref{THM:LWP1} also works for SvNLW with nonlinearity $u^k$, where $k \geq 2$ is an integer. In fact, a simple argument based on Sobolev's inequality can be applied to prove local well-posedness of SvNLW with nonlinearity $u^k$ in the class $\Psi + C([0,T]; \H^s(\T^2))$ for $s \geq 1$. See, for example, Proposition 3.1 in \cite{LO}.

\smallskip \noi
(ii) As it is written in Theorem \ref{THM:LWP1}, we point out that the regularity of initial data can be lowered to the subcritical case, i.e. $s \geq s_{\text{crit}}$ (with a strict inequality when $p = 3$), where $s_{\text{crit}}$ is the critical regularity as defined in \eqref{scrit1} (note that $s_{\text{crit}} \leq \s$ with $\s$ defined in Theorem \ref{THM:LWP1}). See Theorem \ref{THM:LWP3} and Remark \ref{REM:LWP3} for details.

\smallskip \noi
(iii) One can also directly prove local well-posedness of \eqref{SvNLW} for $u \in L^q ([0, T]; L^r (\T^2))$ for some appropriate $q, r \geq 2$. Specifically, in the Duhamel formulation \eqref{Duh}, the linear term $V(t) (u_0, u_1)$ can be estimated by the Strichartz estimate (Lemma \ref{LEM:hom_str}), the nonlinear perturbation term $\int_0^t S(t - t') (|u|^{p-1} u) (t') dt'$ can be estimated by the Schauder estimate (Lemma \ref{LEM:Sch}) along with Young's convolution inequality, and the stochastic convolution $\Psi$ can also be bounded in $L^q ([0, T]; L^r (\T^2))$ (Lemma \ref{LEM:regPsi}). This approach yields a stronger uniqueness result since the solution does not depend on any specific structure such as \eqref{exp1}. 

Nevertheless, this paper is meant to be a continuation of the work in \cite{LO}, and so we choose to study \eqref{SvNLW} from a dispersive point of view. Due to the assumption that the initial data lies in the Sobolev space $\H^s (\T^2)$ for some $s \geq 0$, it is more natural to construct the solution in $C([0, T]; H^s (\T^2))$ for $T > 0$. The Strichartz spaces $L^q ([0, T]; L^r (\T^2))$ can be viewed as ``helper'' spaces that allow us to show local well-posedness for rough initial data (i.e.~with $s \geq 0$ as small as possible).
\end{remark}

\medskip

We now turn our attention to pathwise global well-posedness of SvNLW \eqref{SvNLW}, and we restrict our attention to the defocusing case. Our pathwise global well-posedness result reads as follows.

\begin{theorem}\label{THM:GWP1}
Let $p > 1$ and $\al < \min(\frac 12, \frac{2}{p-1} - \frac 12)$. Let $\s = \max (0, 1 - \frac{1}{p+\dl} - \frac 1p)$ for some arbitrary $\dl > 0$ and let $s \geq \s$. Then, SvNLW \eqref{SvNLW} is pathwise globally well-posed in $\H^s(\T^2)$. More precisely, given any $(u_0, u_1) \in \H^s(\T^s)$, there exists a unique global-in-time solution $u$ to \eqref{SvNLW} with $(u, \dt u)|_{t=0} = (u_0, u_1)$ in the class
\[ \Psi + C(\R_+; H^\s(\T^2)). \]
\end{theorem}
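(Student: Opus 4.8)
The plan is to combine the pathwise local well-posedness result (Theorem~\ref{THM:LWP1}) with an a priori energy bound for the residual term $v$ solving~\eqref{SvNLW_res}. By the first order expansion~\eqref{exp1}, writing $u = v + \Psi$, it suffices to show that $v$ exists globally in $C(\R_+; H^\s(\T^2))$; since $\Psi \in C(\R_+; W^{\frac12-\al-\eps,\infty}(\T^2))$ almost surely, combining the two gives the claimed global solution $u \in \Psi + C(\R_+; H^\s(\T^2))$. The standard blow-up alternative from the local theory reduces global existence to an a priori bound on $\|(v(t),\dt v(t))\|_{\H^1}$ (or on a suitable energy functional) on any finite time interval $[0,T]$, which then also upgrades to control of higher Sobolev norms by persistence of regularity and a bootstrap.

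\medskip

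\noi
First I would introduce the (defocusing) energy
\[
E(v)(t) = \frac12 \int_{\T^2} (\dt v)^2\, \mathd x + \frac12 \int_{\T^2} \bigl( |\nb v|^2 + v^2 \bigr)\, \mathd x + \frac{1}{p+1} \int_{\T^2} |v|^{p+1}\, \mathd x,
\]
differentiate along the flow of~\eqref{SvNLW_res}, and track the bad terms. The viscous term $D\dt v$ contributes $-\int (D^{1/2}\dt v)^2 \le 0$, which is \emph{favorable} and is the reason the viscous equation behaves better than plain NLW. The only genuinely dangerous contribution is the perturbative term coming from expanding $|v+\Psi|^{p-1}(v+\Psi) - |v|^{p-1}v$, which produces, after integration by parts, terms of the schematic form $\int \bigl( |v+\Psi|^{p-1}(v+\Psi) - |v|^{p-1}v \bigr) \dt v\, \mathd x$. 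Using the elementary pointwise inequality $\big| |a+b|^{p-1}(a+b) - |a|^{p-1}a \big| \lesssim |b|^p + |b|\,|a|^{p-1}$, Hölder, and interpolation, these are bounded by a constant depending on $\|\Psi\|_{L^\infty_{t,x}}$ times $\bigl( 1 + E(v) \bigr)$, so that $\frac{\mathd}{\mathd t} E(v) \lesssim C(\|\Psi\|_{L^\infty_T L^\infty}) (1 + E(v))$, and Gronwall closes the estimate on $[0,T]$ for every $T < \infty$.

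\medskip

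\noi
The place where the hypothesis $\al < \frac{2}{p-1} - \frac12$ enters is precisely in controlling the term $\int |\Psi|^p |\dt v|\, \mathd x$ (and the lower-order $\int |\Psi|^j |v|^{p-j}|\dt v|$), since $\Psi$ only lies in $W^{\frac12-\al-\eps,\infty} \subset L^\infty$ when $\al < \frac12$, but the energy only provides $L^{p+1}$ control on $v$ and $L^2$ control on $\dt v$; the remaining powers of $v$ must be absorbed by interpolation between $H^1$ and $L^{p+1}$, and the Gagliardo--Nirenberg exponents that make this work require $p$ not too large relative to the regularity gain $\frac12 - \al$ of $\Psi$ — equivalently $\al < \frac{2}{p-1} - \frac12$. (Note this is a genuine restriction only for $p$ moderately large; for $p \le 3$ it is implied by $\al < \frac12$.) A technical subtlety is that $v$ only has $H^\s$ data with $\s$ possibly below $1$, so the energy argument must be run at the $\H^1$ level only after first using the subcritical local theory to propagate $\H^1$ regularity, or more cleanly by a standard approximation/regularization of the data followed by passing to the limit; I expect the main obstacle to be bookkeeping this low-regularity-to-energy-regularity step together with making the nonlinear energy estimate rigorous for non-integer $p$ (where $|v|^{p-1}v$ is only $C^1$ in $v$), which is handled, as in the local theory, via the fundamental theorem of calculus following Oh--Okamoto--Pocovnicu~\cite{OOP}. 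Finally, once the $\H^1$ bound is global, persistence of regularity gives the $H^\s$ bound for $\s \ge 1$, and for $\s < 1$ one simply notes $H^\s \supset H^1$ locally in the relevant norms after the solution is constructed, yielding the solution in $\Psi + C(\R_+; H^\s(\T^2))$ as claimed.
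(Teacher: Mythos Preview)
Your overall plan---reduce to an energy bound for the residual $v$ via the expansion $u=v+\Psi$, then Gronwall---is exactly what the paper does, and your handling of the smoothing from $\H^\s$ data to $\H^1$ is on the right track (the paper does this via the Schauder estimate \eqref{H1bd}--\eqref{vH1} rather than by approximation, but either works). However, the core energy estimate you sketch only closes for $1<p\le 3$, and your explanation of where the hypothesis $\al<\frac{2}{p-1}-\frac12$ enters is not correct.

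Concretely: after the pointwise bound $\big||v+\Psi|^{p-1}(v+\Psi)-|v|^{p-1}v\big|\lesssim |\Psi|\,|v|^{p-1}+|\Psi|^p$, the dangerous term is $\int |\Psi|\,|v|^{p-1}|\dt v|\,dx$. Putting $\Psi$ in $L^\infty$ and using Cauchy--Schwarz leaves $\|\dt v\|_{L^2}\,\|v\|_{L^{2(p-1)}}^{p-1}$. Interpolating $L^{2(p-1)}$ between $H^1$ and $L^{p+1}$ by Gagliardo--Nirenberg in $d=2$ gives $\|v\|_{L^{2(p-1)}}^{p-1}\lesssim \|v\|_{H^1}^{(p-3)/2}\|v\|_{L^{p+1}}^{(p+1)/2}\lesssim E^{(p-1)/4}$, so the whole term is $\lesssim \|\Psi\|_{L^\infty}E^{(p+1)/4}$. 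This is linear in $E$ only when $p\le 3$; for $p>3$ Gronwall does not close, and no choice of $\al$ helps since only $\|\Psi\|_{L^\infty}$ was used. The paper therefore splits into three regimes: for $1<p\le 3$ your argument (Burq--Tzvetkov style) works verbatim; for $3<p\le 5$ one must integrate the worst term $-p\int \dt v\,|v|^{p-1}\Psi$ by parts \emph{in time} (the Oh--Pocovnicu trick), producing $\int |v|^{p-1}v\,\dt\Psi$, and it is the negative regularity $\dt\Psi\in C_T W^{-\frac12-\al-,\infty}$ together with the fractional chain rule and Gagliardo--Nirenberg (see \eqref{A1b}) that forces $\al<\frac{2}{p-1}-\frac12$; for $p>5$ even the next-order remainder is too large and one needs Latocca's higher-order Taylor expansion combined with paraproduct/Besov estimates (Lemma~\ref{LEM:Bes_bd}). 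So the missing ingredients are the time-integration-by-parts for $p>3$ and the Besov machinery for $p>5$, and the $\al$-restriction arises from the regularity of $\dt\Psi$, not from the spatial interpolation you describe.
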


In Theorem \ref{THM:GWP1}, the uniqueness holds in the following sense. For any $t_0 \in \R_+$, there exists a time interval $I(t_0) \ni t_0$ such that the solution $u$ to \eqref{SvNLW} is unique in
\[ \Psi + C(I(t_0); H^\s(\T^2)) \cap L^q(I(t_0); L^r(\T^2)), \]
where $q,r \geq 2$ are as in Theorem \ref{THM:LWP1}.

As stated in Theorem \ref{THM:GWP1}, when $1 < p \leq 3$, we have the condition $\al < \frac 12$; when $p > 3$, we have the condition $\al < \frac{2}{p-1} - \frac 12$. As $p \to \infty$, the condition for $\al$ becomes $\al \leq -\frac 12$. Note that when $1 < p < 5$, we can prove pathwise global well-posedness of SvNLW \eqref{SvNLW} with the space-time white noise (i.e. $\al = 0$).

We prove Theorem \ref{THM:GWP1} by studying \eqref{SvNLW_res} for the residual term $v$ in Section \ref{SEC:GWP1}. From the proof of Theorem \ref{THM:LWP1}, we see that pathwise global well-posedness follows once we control the $\H^1$-norm of $\vec v(t) := (v(t), \dt v (t))$. For this purpose, we study the evolution of the energy
\begin{equation}
E(\vec v) = \frac 12 \int_{\T^2} (v^2 + |\nb v|^2) dx + \frac 12 \int_{\T^2} (\dt v)^2 dx + \frac{1}{p+1} \int_{\T^2} |v|^{p+1} dx,
\label{defE}
\end{equation}

\noi
which is conserved under the (deterministic) usual NLW:
\[ \dt^2 u + (1-\Dl)u + |u|^{p-1}u = 0. \]

\noi
Note that for our problem, we proceed with the first order expansion \eqref{exp1}, where the residual term $v = \Psi - u$ only satisfies \eqref{SvNLW_res}. In this case, the energy $E(\vec v)$ is not conserved under the equation \eqref{SvNLW_res} because of the perturbative term $|v+\Psi|^{p-1}(v+\Psi) - |v|^{p-1}v$. For our problem, we first follow the globalization argument by Burq-Tzvetkov \cite{BT14} and establish an exponential growth bound on $E(\vec v)$, which works in the sub-cubic case $1 < p \leq 3$. For the super-cubic case $p > 3$, this argument no longer works due to the high homogeneity of the non-linearity. When $3 < p \leq 5$, we use an integration by parts trick introduced by Oh-Pocovnicu \cite{OP16}. In the super-quintic case $p > 5$, we use a trick involving the Taylor expansion, where the idea comes from Latocca \cite{Lat}.

One important prerequisite for studying the evolution of the energy $E(\vec v)$ is that the local-in-time solution $\vec v$ lies in $\H^1(\T^2)$, which is not guaranteed by the pathwise local well-posedness result (Theorem \ref{THM:LWP1}) as it is written. Nonetheless, due to the dissipative nature of the equation, we show that $\vec v(t)$ indeed belongs to $\H^1(\T^2)$ for any $t > 0$ by using the Schauder estimate (Lemma \ref{LEM:Sch}) along with Theorem \ref{THM:LWP1}. See Section \ref{SEC:GWP1} for details.

\medskip
We conclude our introduction by stating several remarks.

\begin{remark} \rm
(i) We point out that Theorem \ref{THM:LWP1} and Theorem \ref{THM:GWP1} also hold if we have $-\Dl$ instead of $1-\Dl$ in \eqref{SvNLW} by using an essentially identical proof.

\smallskip \noi
(ii) In \cite{LO}, T. Oh and the author studied SvNLW \eqref{SvNLW} with $\al = \frac 12$. In this case, due to $\al = \frac 12$, the stochastic term $\Psi$ defined in \eqref{Psi} turns out to be merely a distribution, so that we studied a renormalized version of \eqref{SvNLW} and proved pathwise global well-posedness in the cubic case. Because of the singular nature of the stochastic convolution in this setting, the standard Gronwall argument does not work, and so we used a Yudovich-type argument to bound the corresponding energy.

In the same paper, we also proved almost sure global well-posedness of \eqref{SvNLW} with $p \in 2\N+1$ and with random initial data, using the formal invariance of the Gibbs measure. However, the argument only works for $\al = \frac 12$, so it does not apply to our problem with $\al < \frac 12$ in this paper. Instead, in this paper, we establish pathwise global well-posedness of SvNLW \eqref{SvNLW}.

\smallskip \noi
(iii) We can also consider the vNLW with randomized initial data:
\begin{align}
\begin{cases}
\dt^2 u + (1 - \Dl) u + D \dt u + |u|^{p-1} u = 0 \\
(u, \dt u) |_{t = 0} = (u_0^\o, u_1^\o).
\end{cases}
\label{vNLWr}
\end{align}

\noi
Here, the randomization $(u_0^\o, u_1^\o)$ of the initial data $(u_0, u_1)$ is defined by
\begin{align}
(u_0^\o, u_1^\o) := \bigg( \sum_{n \in \Z^2} g_{n, 0} (\o) \ft{u_0} (n) e^{in \cdot x}, \sum_{n \in \Z^2} g_{n, 1} (\o) \ft{u_1} (n) e^{in \cdot x} \bigg),
\label{rand}
\end{align}
where for $j = 0, 1$, $\ft{u_j} (-n) = \cj{\ft{u_j} (n)}$ for all $n \in \Z^2$ and $\{g_{n, j}\}_{n \in \Z^2}$ is a sequence of mean zero complex-valued random variables such that $g_{-n, j} = \cj{g_{n, j}}$ for all $n \in \Z^2$. Moreover, we assume that $g_{0, j}$ is real-valued for $j = 0, 1$, $\{ g_{0, j}, \Re g_{n, j}, \Im g_{n, j} \}_{n \in \I, j = 0, 1}$ are independent with $\I = (\Z_+ \times \{0\}) \cup (\Z \times \Z_+)$, and there exists a constant $c > 0$ such that on the probability distributions $\mu_{n, j}$ of $g_{n, j}$, we have
\begin{align}
\int e^{\g \cdot x} d\mu_{n, j} (x) \leq e^{c |\g|^2}, \quad j = 0, 1
\label{exp_bdd}
\end{align}
for all $\g \in \R^2$ when $n \in \Z^2 \setminus \{0\}$ and all $\g \in \R$ when $n = 0$. Note that  \eqref{exp_bdd} is satisfied for standard complex-valued Gaussian random variables, standard Bernoulli random variables, and any random variables with compactly supported distributions.

The randomization \eqref{rand} allows us to consider almost sure global well-posedness of \eqref{vNLWr} for $(u_0, u_1)$ living in negative Sobolev spaces. For almost sure local well-posedness, we consider the following first order expansion similar to \eqref{exp1}:
\begin{align*}
u = v + z,
\end{align*}
where $z$ is the solution of the linear viscous wave equation with initial data $(u_0^\o, u_1^\o)$:
\begin{align*}
z(t) = z^\o (t) := V(t) (u_0^\o, u_1^\o)
\end{align*}
with $V(t)$ defined as in \eqref{defV}. By using the Schauder estimate (Lemma \ref{LEM:Sch}), we can establish similar (but stronger) probabilistic Strichartz estimates for $z$ and $\jb{\nb}^{-1} \dt z$ as in \cite{OP16, OP17}. This enables us to prove almost sure local well-posedness of \eqref{vNLWr} using a similar argument as for proving Theorem \ref{THM:LWP1}, as long as $(u_0, u_1) \in \H^s (\T^2)$ with $s > - \frac{1}{p}$. On the other hand, the proof for almost sure global well-posedness of \eqref{vNLWr} is much simpler than that for Theorem \ref{THM:GWP1}, since $z(t)$ is smooth for $t > 0$ thanks to the parabolic smoothing. We omit details since this is not the main focus in this paper. 
\end{remark}

\section{Preliminary lemmas}
\label{SEC:2}

In this section, we discuss some notations and lemmas that are necessary for proving our well-posedness results.

We use $A \les B$ to denote $A \leq CB$ for some constant $C>0$, and we write $A \sim B$ if $A \les B$ and $B \les A$. Also, we use $a+$ (and $a-$) to denote $a+\eps$ (and $a-\eps$, respectively) for arbitrarily small $\eps > 0$. In addition, we use short-hand notations to work with space-time function spaces. For example, $C_T H_x^s = C([0,T]; H^s(\T^d))$.

\subsection{Sobolev spaces and Besov spaces}
Let $s \in \R$. We denote $H^s(\T^d)$ as the $L^2$-based Sobolev space with the norm:
\[ \| u \|_{H^s(\T^d)} = \| \jb{n}^s \ft u (n) \|_{\l_n^2(\Z^d)}, \]
where $\ft u (n)$ is the Fourier coefficient of $u$ and $\jb{\cdot} = (1 + |\cdot|)^{\frac 12}$. We then define $\H^s(\T^d)$ as
\[ \H^s(\T^d) = H^s(\T^d) \times H^{s-1}(\T^d). \]
Also, we denote $W^{s,p}(\T^d)$ as the $L^p$-based Sobolev space with the norm:
\[ \| u \|_{W^{s,p}(\T^d)} = \big\| \F^{-1} ( \jb{n}^s \ft u (n) ) \big\|_{L^p(\T^d)}, \]
where $\F^{-1}$ denotes the inverse Fourier transform on $\T^d$. When $p=2$, we have $H^s(\T^d) = W^{s,2}(\T^d)$.

Let $\varphi : \R \to [0,1]$ be a bump function such that $\varphi \in C_c([-\tfrac 85, \tfrac 85])$ and $\varphi \equiv 1$ on $[-\tfrac 54, \tfrac 54]$. For $\xi \in \R^d$, we define $\varphi_0(\xi) = \varphi(|\xi|)$ and
\[ \varphi_j(\xi) = \varphi\big( \tfrac{|\xi|}{2^j} \big) - \varphi\big( \tfrac{|\xi|}{2^{j-1}} \big) \]
for $j \in \Z_+$. Note that
\begin{equation}
\sum_{j \in \Z_{\geq 0}} \varphi_j(\xi) = 1
\label{phi_sum}
\end{equation}

\noi
for any $\xi \in \R^d$. For $j \in \Z_{\geq 0}$, we define the Littlewood-Paley projector $\P_j$ as
\[ \P_j u = \F^{-1} (\varphi_j \ft u). \]
Due to \eqref{phi_sum}, we have
\begin{equation}
u = \sum_{j=0}^\infty \P_j u.
\label{decomp}
\end{equation}

We also recall the definition of Besov spaces $B_{p,q}^s (\T^d)$ equipped with the norm:
\[ \| u \|_{B_{p,q}^s(\T^d)} = \big\| 2^{sj} \| \P_j u \|_{L_x^p(\T^d)} \big\|_{\l_j^q(\Z_{\geq 0})}. \]
Note that $H^s(\T^d) = B_{2,2}^s(\T^d)$.

We then recall the definition of paraproducts introduced by Bony \cite{Bony}. For details, see \cite{BCD, GIP}. For given functions $u$ and $v$ on $\T^d$ of regularities $s_1$ and $s_2$, respectively. By \eqref{decomp}, we can write the product $uv$ as
\begin{align*}
uv &= u \pl u + u \pe v + u \pg v \\
:\!\! &= \sum_{j < k-2} \P_j u \P_k v + \sum_{|j-k| \leq 2} \P_j u \P_k v + \sum_{k < j-2} \P_j u \P_k v.
\end{align*}

\noi
The term $u \pl v$ (and the term $u \pg v$) is called the paraproduct of $v$ by $u$ (and the paraproduct of $u$ by $v$, respectively), and it is well defined as a distribution of regularity $\min(s_2, s_1+s_2)$ (and $\min(s_1, s_1+s_2)$, respectively). The term $u \pe v$ is called the resonant product of $u$ and $v$, and it is well defined in general only if $s_1+s_2 > 0$.

With these definitions in hand, we recall some basic properties of Besov spaces.

\begin{lemma}\label{LEM:Bes}
\textup{(i)} Let $s_1,s_2 \in \R$ and $1 \leq p, p_1, p_2, q \leq \infty$ which satisfies $\frac{1}{p} = \frac{1}{p_1} + \frac{1}{p_2}$. Then, we have
\begin{equation}
\| u \pl v \|_{B_{p,q}^{s_2}(\T^d)} \les \| u \|_{L^{p_1}(\T^d)} \| v \|_{B_{p_2,q}^{s_2}(\T^d)}.
\label{Bes_pl}
\end{equation}

\noi
When $s_1 + s_2 > 0$, we have
\begin{equation}
\| u \pe v \|_{B_{p,q}^{s_1+s_2}(\T^d)} \les \| u \|_{B_{p_1,q}^{s_1}(\T^d)} \| v \|_{B_{p_2,q}^{s_2}(\T^d)}.
\label{Bes_pe}
\end{equation}

\smallskip
\noi
\textup{(ii)} Let $s_1 < s_2$ and $1\leq p,q \leq \infty$. Then, we have
\begin{equation}
\| u \|_{B_{p,q}^{s_1}(\T^d)} \les \| u \|_{W^{s_2,p}(\T^d)}.
\label{Bes_emb1}
\end{equation}

\noi
In particular, when $q = \infty$, we have
\begin{equation}
\| u \|_{B_{p,\infty}^{s_1}(\T^d)} \les \| u \|_{W^{s_1,p}(\T^d)}.
\label{Bes_emb2}
\end{equation}
\end{lemma}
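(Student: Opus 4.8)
The plan is to prove all three estimates by the standard Bony--Littlewood--Paley paraproduct calculus; see \cite{Bony, BCD, GIP} for background. The two analytic ingredients I will use repeatedly are: first, the uniform bound $\|\P_j u\|_{L^p(\T^d)} \les \|u\|_{L^p(\T^d)}$ for all $1 \le p \le \infty$ and all $j \ge 0$, together with the analogous bound for the smooth low-frequency truncations $S_k := \sum_{j=0}^{k}\P_j = \varphi(|\nb|/2^k)$ (the last identity by telescoping of the $\varphi_j$), both of which follow from the fact that the corresponding convolution kernels have $L^1(\T^d)$-norm bounded uniformly in $k$; and second, the weighted bound $2^{sj}\|\P_j u\|_{L^p(\T^d)} \les \|u\|_{W^{s,p}(\T^d)}$, obtained by writing $2^{sj}\P_j u = m_j(\nb)[\jb{\nb}^s u]$ with $m_j(\xi) = 2^{sj}\varphi_j(\xi)\jb{\xi}^{-s}$, which is a bump adapted to $\{\jb{\xi}\sim 2^j\}$ with symbol bounds uniform in $j$. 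The geometric facts I will invoke are: (a) $\P_j u$ has Fourier support in $\{\jb{n}\sim 2^j\}$; (b) the product $(S_{k-3}u)(\P_k v)$ has Fourier support in an annulus $\{|n|\sim 2^k\}$; and (c) for $|j-k|\le 2$ the product $(\P_j u)(\P_k v)$ has Fourier support in a ball $\{|n|\les 2^k\}$.

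For \eqref{Bes_pl}, I write $u \pl v = \sum_k (S_{k-3}u)(\P_k v)$. By (b), only finitely many $k$ with $|k-l|\le C$ contribute to $\P_l(u\pl v)$, so by Hölder's inequality with $\frac1p = \frac1{p_1}+\frac1{p_2}$, the uniform $L^{p_1}$-boundedness of $S_{k-3}$, and the uniform $L^p$-boundedness of $\P_l$,
\[ \|\P_l(u\pl v)\|_{L^p} \les \sum_{|k-l|\le C}\|S_{k-3}u\|_{L^{p_1}}\|\P_k v\|_{L^{p_2}} \les \|u\|_{L^{p_1}}\sum_{|k-l|\le C}\|\P_k v\|_{L^{p_2}}. \]
Multiplying by $2^{s_2 l}$, using $2^{s_2 l}\sim 2^{s_2 k}$ on the finite range $|k-l|\le C$, and taking the $\l_l^q$-norm via Young's convolution inequality (convolution with a finitely supported kernel) gives \eqref{Bes_pl}.

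For \eqref{Bes_pe}, I write $u \pe v = \sum_k (\widetilde{\P}_k u)(\P_k v)$ with $\widetilde{\P}_k := \sum_{|j-k|\le 2}\P_j$. By (c), $\P_l$ of the $k$-th summand vanishes unless $k \ge l-C$, so with $a_k := 2^{s_1 k}\|\widetilde{\P}_k u\|_{L^{p_1}}$ and $b_k := 2^{s_2 k}\|\P_k v\|_{L^{p_2}}$,
\[ 2^{(s_1+s_2)l}\|\P_l(u\pe v)\|_{L^p} \les \sum_{k\ge l-C} 2^{(s_1+s_2)(l-k)}\,a_k b_k. \]
Since $s_1 + s_2 > 0$, the kernel $m\mapsto 2^{-(s_1+s_2)m}\ind_{m\ge -C}$ lies in $\l^1(\Z)$, so by Young's inequality followed by $\l^q\hookrightarrow\l^\infty$,
\[ \big\|2^{(s_1+s_2)l}\|\P_l(u\pe v)\|_{L^p}\big\|_{\l_l^q} \les \|a_k b_k\|_{\l_k^q} \le \|a\|_{\l_k^\infty}\|b\|_{\l_k^q} \le \|a\|_{\l_k^q}\|b\|_{\l_k^q}. \]
Finally $\|a\|_{\l_k^q}\les \|u\|_{B_{p_1,q}^{s_1}}$ (estimate $\|\widetilde\P_k u\|_{L^{p_1}}$ by the sum of the $O(1)$ shifted pieces and use translation-invariance of $\l^q$) and $\|b\|_{\l_k^q} = \|v\|_{B_{p_2,q}^{s_2}}$, which yields \eqref{Bes_pe}.

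For \eqref{Bes_emb1}, factor $2^{s_1 j}\|\P_j u\|_{L^p} = 2^{(s_1-s_2)j}\cdot 2^{s_2 j}\|\P_j u\|_{L^p} \les 2^{(s_1-s_2)j}\|u\|_{W^{s_2,p}}$ by the weighted bound above; since $s_1 < s_2$, the sequence $(2^{(s_1-s_2)j})_{j\ge 0}$ lies in $\l^q$ for every $q$, so taking $\l^q_j$-norms gives \eqref{Bes_emb1}. For \eqref{Bes_emb2}, the weighted bound with $s = s_1$ already gives $2^{s_1 j}\|\P_j u\|_{L^p} \les \|u\|_{W^{s_1,p}}$ for each $j$, and taking the supremum over $j$ is exactly \eqref{Bes_emb2}. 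The only point in the argument requiring genuine care is the bookkeeping of Fourier supports in the resonant term \eqref{Bes_pe}, where the ball (rather than annulus) localization forces the summation over $k \gtrsim l$ and makes the condition $s_1 + s_2 > 0$ and the embedding $\l^q \hookrightarrow \l^\infty$ essential; the remaining steps are routine.
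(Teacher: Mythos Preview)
Your proof is correct and follows the standard Bony paraproduct calculus that the paper defers to by citing \cite{BCD, MW} for part (i), while your arguments for \eqref{Bes_emb1} and \eqref{Bes_emb2} are precisely the ``$L^p$-boundedness of $\P_j$ plus $\l^q$-summability of $\{2^{(s_1-s_2)j}\}$'' sketch the paper gives, with the added care of spelling out the uniform multiplier bound $2^{sj}\|\P_j u\|_{L^p}\les\|u\|_{W^{s,p}}$ that makes the sketch rigorous for all $1\le p\le\infty$. There is nothing to correct; your write-up simply supplies the details the paper omits.
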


See \cite{BCD, MW} for the proofs of \eqref{Bes_pl} and \eqref{Bes_pe} in the $\R^d$ setting, which can be easily extended to the $\T^d$ setting. The embedding \eqref{Bes_emb1}  follows from the $L^p$ boundedness of $\P_j$ and the $\l^q$-summability of $\big\{ 2^{(s_1-s_2)j} \big\}_{j \in \Z_{\geq0}}$, and the embedding \eqref{Bes_emb2} follows easily from the $L^p$ boundedness of $\P_j$.

Using \eqref{Bes_pl} and \eqref{Bes_pe}, we get the following product estimate.

\begin{corollary}\label{COR:bilin}
Let $s > 0$, $1 \leq p,q \leq \infty$ and $1 \leq p_1,p_2,q_1,q_2 \leq \infty$ satisfying
\[ \frac{1}{p_1} + \frac{1}{q_1} = \frac{1}{p_2} + \frac{1}{q_2} = \frac{1}{p}. \]
Then, \[ \| uv \|_{B_{p,q}^s(\T^d)} \les \| u \|_{B_{p_1,q}^s(\T^d)} \| v \|_{L^{q_1}(\T^d)} + \| u \|_{L^{p_2}(\T^d)} \| v \|_{B_{q_2,q}^s(\T^d)}. \]
\end{corollary}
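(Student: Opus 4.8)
The plan is to derive the product estimate in Corollary \ref{COR:bilin} directly from the paraproduct decomposition together with the two estimates in Lemma \ref{LEM:Bes}(i). First I would write, for functions $u,v$ on $\T^d$,
\[
uv = u \pl v + u \pe v + u \pg v,
\]
and estimate each of the three pieces in $B_{p,q}^s(\T^d)$ separately, using that $s > 0$ throughout so that the resonant term is admissible.

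For the low-high paraproduct $u \pl v$, I would apply \eqref{Bes_pl} with the pair of exponents chosen as $(p_2, q_2)$ in the notation of the corollary (i.e. the H\"older split $\frac{1}{p} = \frac{1}{p_2} + \frac{1}{q_2}$), which gives
\[
\| u \pl v \|_{B_{p,q}^s(\T^d)} \les \| u \|_{L^{p_2}(\T^d)} \| v \|_{B_{q_2,q}^s(\T^d)};
\]
here I use $s_1 = 0$, $s_2 = s$, so the hypothesis $s_1 + s_2 > 0$ is not even needed for this term. Symmetrically, for the high-low paraproduct $u \pg v = v \pl u$, I would apply \eqref{Bes_pl} with the roles of $u$ and $v$ swapped and the split $\frac{1}{p} = \frac{1}{p_1} + \frac{1}{q_1}$, obtaining
\[
\| u \pg v \|_{B_{p,q}^s(\T^d)} \les \| u \|_{B_{p_1,q}^s(\T^d)} \| v \|_{L^{q_1}(\T^d)}.
\]
For the resonant term $u \pe v$, since $s > 0$ I would split the regularity as $s = s + 0$ (taking $s_1 = s$, $s_2 = 0$, so $s_1 + s_2 = s > 0$) and apply \eqref{Bes_pe} with the exponents $(p_1, q_1)$, i.e.
\[
\| u \pe v \|_{B_{p,q}^s(\T^d)} \les \| u \|_{B_{p_1,q}^s(\T^d)} \| v \|_{B_{q_1,q}^{0}(\T^d)} \les \| u \|_{B_{p_1,q}^s(\T^d)} \| v \|_{L^{q_1}(\T^d)},
\]
where the last step uses the trivial embedding $L^{q_1} = B^0_{q_1,1} \embeds B^0_{q_1,q}$ (or, slightly more carefully, $\|v\|_{B^0_{q_1,q}} \les \|v\|_{B^0_{q_1,1}} \les \|v\|_{L^{q_1}}$ via the $L^{q_1}$-boundedness of the $\P_j$ and Minkowski). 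One could alternatively assign the resonant term to the other symmetric bound by taking $s_1 = 0$, $s_2 = s$ and the split $(p_2,q_2)$.

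Summing the three contributions yields
\[
\| uv \|_{B_{p,q}^s(\T^d)} \les \| u \|_{B_{p_1,q}^s(\T^d)} \| v \|_{L^{q_1}(\T^d)} + \| u \|_{L^{p_2}(\T^d)} \| v \|_{B_{q_2,q}^s(\T^d)},
\]
which is the claimed estimate. I do not expect a serious obstacle here: the only mild subtlety is the bookkeeping of which H\"older split and which regularity split to attach to the resonant piece so that it lands in one of the two terms on the right-hand side (and checking the endpoint cases $p_i, q_i \in \{1,\infty\}$ are covered by Lemma \ref{LEM:Bes} as stated). Everything else is a direct citation of \eqref{Bes_pl}, \eqref{Bes_pe}, and the embedding $L^{q}\embeds B^0_{q,\infty}$ already recorded in \eqref{Bes_emb2}.
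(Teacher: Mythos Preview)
Your approach is exactly the one the paper indicates: it simply records that Corollary~\ref{COR:bilin} follows from \eqref{Bes_pl} and \eqref{Bes_pe} via the Bony decomposition, without writing out the three pieces. Your handling of the two paraproduct terms $u \pl v$ and $u \pg v$ is correct.

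There is a slip in the resonant piece, though. You assert $L^{q_1} = B^0_{q_1,1}$ and then $\|v\|_{B^0_{q_1,1}} \les \|v\|_{L^{q_1}}$; neither holds in general (the correct chain is $B^0_{q_1,1} \embeds L^{q_1} \embeds B^0_{q_1,\infty}$, so the $\l^1$-sum of $\|\P_j v\|_{L^{q_1}}$ is \emph{not} controlled by $\|v\|_{L^{q_1}}$). Because \eqref{Bes_pe} as stated carries the same index $q$ on both factors, taking $s_2 = 0$ leaves you with $\|v\|_{B^0_{q_1,q}}$, which is not dominated by $\|v\|_{L^{q_1}}$ for arbitrary $q$. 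The fix is routine: the resonant estimate in \cite{BCD, MW} actually holds in the form
\[
\| u \pe v \|_{B^{s_1+s_2}_{p,q}} \les \| u \|_{B^{s_1}_{p_1,q}} \| v \|_{B^{s_2}_{p_2,\infty}}, \qquad s_1+s_2>0,
\]
and then \eqref{Bes_emb2} gives $\|v\|_{B^0_{q_1,\infty}} \les \|v\|_{L^{q_1}}$, which is the embedding you wanted.
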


\medskip
Next, we recall the following chain rule estimates.

\begin{lemma}\label{LEM:chain}
Let $u$ be a smooth function on $\T^d$, $s \in (0,1)$, $r \geq 2$. Let $F$ denote the function $F(u) = |u|^{r-1}u$ or $F(u) = |u|^r$. 

\smallskip \noi
\textup{(i)} Let $1 < p, p_1 < \infty$ and $1 < p_2 \leq \infty$ satisfying $\frac{1}{p} = \frac{1}{p_1} + \frac{1}{p_2}$. Then, we have
\begin{equation}
\| F(u) \|_{W^{s,p}(\T^d)} \les \| u \|_{W^{s,  p_1}(\T^d)} \big\| |u|^{r-1} \big\|_{L^{p_2}(\T^d)}.
\label{chain1}
\end{equation}

\smallskip \noi
\textup{(ii)} Let $1 \leq p,q \leq \infty$ and $1 \leq p_1,p_2 \leq \infty$ satisfying $\frac{1}{p} = \frac{1}{p_1} + \frac{1}{p_2}$. Then, we have
\begin{equation}
\| F(u) \|_{B_{p,q}^s(\T^d)} \les \| u \|_{B_{p_1,q}^s(\T^d)} \big\| |u|^{r-1} \big\|_{L^{p_2}(\T^d)}.
\label{chain2}
\end{equation}
\end{lemma}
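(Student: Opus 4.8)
The plan is to prove the Besov-space estimate \eqref{chain2} first, since it is slightly more robust, and then deduce the $W^{s,p}$ estimate \eqref{chain1} by a comparison of norms. For \eqref{chain2}, I would start from the Littlewood--Paley characterization $\|F(u)\|_{B^s_{p,q}} = \|2^{sj}\|\P_j F(u)\|_{L^p_x}\|_{\l^q_j}$ and estimate each block $\P_j F(u)$. The cleanest route is the classical finite-difference (Besov) characterization: for $s\in(0,1)$,
\[
\|g\|_{B^s_{p,q}(\T^d)} \sim \|g\|_{L^p} + \bigg\| \frac{\|g(\cdot+h)-g(\cdot)\|_{L^p_x}}{|h|^s}\bigg\|_{L^q(\frac{dh}{|h|^d};\,|h|\le 1)},
\]
which holds on $\T^d$ for $1\le p,q\le\infty$. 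Since $F(u)=|u|^{r-1}u$ or $|u|^r$ with $r\ge 2$ is $C^1$ with $|F'(w)|\lesssim |w|^{r-1}$, the mean value theorem gives the pointwise bound
\[
|F(u)(x+h)-F(u)(x)| \;\lesssim\; \big(|u(x+h)|^{r-1}+|u(x)|^{r-1}\big)\,|u(x+h)-u(x)|.
\]
Applying Hölder in $x$ with exponents $p_1$ (on the difference of $u$) and $p_2$ (on $|u|^{r-1}$, using translation invariance of the $L^{p_2}$ norm on $\T^d$) yields
\[
\|F(u)(\cdot+h)-F(u)(\cdot)\|_{L^p_x} \;\lesssim\; \big\| |u|^{r-1}\big\|_{L^{p_2}} \,\|u(\cdot+h)-u(\cdot)\|_{L^{p_1}_x}.
\]
Dividing by $|h|^s$, taking the $L^q(dh/|h|^d)$ norm over $|h|\le 1$, and adding the $L^p$-term (handled by plain Hölder, $\|F(u)\|_{L^p}\lesssim \||u|^{r-1}\|_{L^{p_2}}\|u\|_{L^{p_1}}\lesssim \||u|^{r-1}\|_{L^{p_2}}\|u\|_{B^s_{p_1,q}}$, using $B^s_{p_1,q}\hookrightarrow L^{p_1}$ for $s>0$) gives exactly \eqref{chain2}. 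For endpoint exponents $p_i\in\{1,\infty\}$ one uses the same inequalities, the finite-difference characterization being valid there as well.

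For \eqref{chain1}, with $1<p,p_1<\infty$ and $1<p_2\le\infty$, I would simply invoke \eqref{chain2} with $q=2$ together with the embedding $B^{s+\eps}_{p,2}\hookrightarrow W^{s,p}\hookrightarrow B^{s-\eps}_{p,2}$ is \emph{not} quite enough to land on $W^{s,p}$ exactly, so instead the cleaner argument is to run the finite-difference argument directly against the characterization of $W^{s,p}=F^s_{p,2}$: for $1<p<\infty$ and $s\in(0,1)$ one has the Gagliardo-type norm equivalence
\[
\|g\|_{W^{s,p}(\T^d)} \sim \|g\|_{L^p} + \bigg\| \bigg(\int_{|h|\le1}\frac{|g(\cdot+h)-g(\cdot)|^2}{|h|^{2s}}\frac{dh}{|h|^d}\bigg)^{1/2}\bigg\|_{L^p_x},
\]
valid precisely because $1<p<\infty$. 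One then repeats the pointwise mean value bound, pulls the factor $|u(x+h)|^{r-1}+|u(x)|^{r-1}$ out, and uses Hölder in $x$ with exponents $\frac{p_1}{p}$ and $\frac{p_2}{p}$ after taking the square-function integral, the point being that the square-function and the $\|\cdot\|_{L^{p_2}}$ factor decouple because one of the two factors is essentially $u$-translation-invariant in $L^{p_2}$; Minkowski's integral inequality moves the $L^{p_1}_x$ norm inside the $h$-integral to recover the Gagliardo seminorm of $u$. Alternatively, one can cite the standard Kato--Ponce / Christ--Weinstein fractional chain rule on $\R^d$ and transfer to $\T^d$ via periodization, or cite the Bony-paralinearization version (e.g.\ \cite{BCD}), which gives \eqref{chain1} directly; I would present the self-contained finite-difference proof and remark that it also follows from \cite{BCD, GIP}.

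The main obstacle is bookkeeping with the endpoint exponents and, for \eqref{chain1}, the fact that $W^{s,p}$ for $p\ne 2$ is a Triebel--Lizorkin space rather than a Besov space, so the naive ``prove it in Besov, embed into $W^{s,p}$'' shortcut does not give a scale-invariant estimate with matching exponents; one genuinely needs either the $F^s_{p,2}$ square-function characterization (which forces $1<p<\infty$, exactly the hypothesis imposed) or a paralinearization argument. Once the correct characterization is in hand, the mean value inequality plus Hölder is routine. I would also double-check that the resonant/low-frequency parts cause no trouble: since $s>0$ there is no summability issue at low frequencies, and the $L^p$-term is always controlled by the $L^p$ (hence Besov/Sobolev) norm of $u$ times $\||u|^{r-1}\|_{L^{p_2}}$.
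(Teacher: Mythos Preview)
The paper itself does not prove this lemma: it simply cites \cite{Gat} for \eqref{chain1} and Lemma~3.5 of \cite{Lat} for \eqref{chain2}. Your finite-difference argument for \eqref{chain2} is correct and is essentially the argument in \cite{Lat}: the pointwise bound $|F(u)(x+h)-F(u)(x)|\les (|u(x+h)|^{r-1}+|u(x)|^{r-1})|u(x+h)-u(x)|$ followed by H\"older in $x$ (using translation invariance of $\||u|^{r-1}\|_{L^{p_2}}$) cleanly yields the Besov seminorm of $u$, since in the Besov characterization the $L^p_x$ norm sits \emph{inside} the $h$-integral.

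For \eqref{chain1}, however, your square-function route does not go through as written. In the Triebel--Lizorkin characterization the $L^p_x$ norm sits \emph{outside} the $h$-integral, so the factor $|u(x+h)|^{r-1}$ cannot be ``pulled out'': it depends on the integration variable $h$ and does not decouple from the square function by translation invariance alone. Your appeal to Minkowski would move the $L^{p_1}_x$ norm inside the $h$-integral, but that recovers only the Besov norm $\|u\|_{B^s_{p_1,2}}$, not $\|u\|_{W^{s,p_1}}$, and the inequality goes the wrong way to return. Making the finite-difference approach work in $F^s_{p,2}$ requires an additional ingredient, typically domination of $|u(\cdot+h)|^{r-1}$ by the Hardy--Littlewood maximal function $M(|u|^{r-1})$ together with its $L^{p_2}$-boundedness (which needs $p_2>1$, precisely the hypothesis), or else a Bony paralinearization argument. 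Your stated fallback --- quoting the Christ--Weinstein/Kato--Ponce chain rule and transferring to $\T^d$ --- is exactly what the paper does via \cite{Gat}, so that route is fine; just be aware that the self-contained $F^s_{p,2}$ finite-difference proof you sketch needs more than H\"older and Minkowski.
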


The estimate \eqref{chain1} follows immediately from the fractional chain rule on $\T^d$ in \cite{Gat}. For the proof of \eqref{chain2}, see, for example, Lemma 3.5 in \cite{Lat} in the $\R^d$ setting, which can be easily extended to the $\T^d$ setting.

\medskip
Lastly, we recall the following Gagliardo-Nirenberg interpolation inequality.

\begin{lemma}\label{LEM:Gag}
Let $p_1, p_2 \in (1,\infty)$ and $s_1,s_2 > 0$. Let $p > 1$ and $\ta \in (0,1)$ satisfying
\[ -\frac{s_1}{d} + \frac{1}{p} = (1-\ta)\bigg( \frac{1}{p_1} - \frac{s_2}{d} \bigg) + \frac{\ta}{p_2} \quad \text{and} \quad s_1 \leq (1-\ta)s_2. \]
Then, for $u \in W^{s_2,p_1}(\T^d) \cap L^{p_2}(\T^d)$, we have
\[ \| u \|_{W^{s_1,p}(\T^d)} \les \| u \|_{W^{s_2,p_1}(\T^d)}^{1-\ta} \| u \|_{L^{p_2}(\T^d)}^\ta. \]
\end{lemma}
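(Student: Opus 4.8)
The plan is to derive Lemma~\ref{LEM:Gag} by composing two classical facts on the torus: the complex interpolation identity for $L^p$-based Sobolev (Bessel potential) spaces, and the sharp Sobolev embedding. Throughout I identify $W^{s,p}(\T^d)$ with the Bessel potential space $\jb{\nabla}^{-s}L^p(\T^d)$, which is legitimate for $1<p<\infty$ (the relevant range), and I set $\tfrac1q:=\tfrac{\ta}{p_2}+\tfrac{1-\ta}{p_1}$; since $p_1,p_2\in(1,\infty)$ one has $q\in(1,\infty)$.

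First I would invoke the standard complex interpolation identity
\[ \big[\, L^{p_2}(\T^d),\ W^{s_2,p_1}(\T^d)\,\big]_{1-\ta}\ =\ W^{(1-\ta)s_2,\,q}(\T^d), \]
which holds on $\T^d$ exactly as on $\R^d$. Combined with the elementary logarithmic-convexity bound for the complex interpolation functor, this yields, for every $u\in W^{s_2,p_1}(\T^d)\cap L^{p_2}(\T^d)$,
\[ \|u\|_{W^{(1-\ta)s_2,\,q}(\T^d)}\ \les\ \|u\|_{L^{p_2}(\T^d)}^{\ta}\,\|u\|_{W^{s_2,p_1}(\T^d)}^{1-\ta}. \]

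It then remains only to prove the embedding $W^{(1-\ta)s_2,\,q}(\T^d)\hookrightarrow W^{s_1,p}(\T^d)$; chaining it with the previous display finishes the proof. The two hypotheses of the lemma are precisely what is needed here: the condition $s_1\le(1-\ta)s_2$ provides a (possibly non-strict) gain of derivatives, and substituting $\tfrac1q=\tfrac{\ta}{p_2}+\tfrac{1-\ta}{p_1}$ into the scaling relation $-\tfrac{s_1}{d}+\tfrac1p=(1-\ta)\big(\tfrac1{p_1}-\tfrac{s_2}{d}\big)+\tfrac{\ta}{p_2}$ rearranges to
\[ \frac1p\ =\ \frac1q-\frac{(1-\ta)s_2-s_1}{d}\,, \]
so that $q\le p$ and the required inclusion is exactly the endpoint Sobolev embedding $W^{\sigma,q}(\T^d)\hookrightarrow W^{\tau,p}(\T^d)$ with $\sigma=(1-\ta)s_2\ge\tau=s_1$ and $\tfrac1p=\tfrac1q-\tfrac{\sigma-\tau}{d}$; when $s_1=(1-\ta)s_2$ it degenerates to $p=q$ and is trivial.

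The only genuine point, and the one I would expect to be the main obstacle, is this endpoint Sobolev embedding: one may quote it directly (it is valid throughout the stated range of exponents, in particular for $p<\infty$), or, to keep the argument self-contained, reprove it via the Littlewood--Paley decomposition \eqref{decomp}, Bernstein's inequality, and the inclusion $L^{q'}(\T^d)\subset L^{q}(\T^d)$ for $q\le q'$, which on the torus follows from $|\T^d|<\infty$. The same ingredients in fact give an alternative, purely Littlewood--Paley proof bypassing interpolation: bound $\|u\|_{W^{s_1,p}}\les\sum_{j}2^{s_1 j}\|\P_j u\|_{L^p}$ by the triangle inequality and a Mikhlin-type multiplier bound on each block; estimate $\|\P_j u\|_{L^p}$ by Bernstein both against $2^{-s_2 j}\|u\|_{W^{s_2,p_1}}$ (using \eqref{Bes_emb2}) and against $\|u\|_{L^{p_2}}$, each up to a power of $2^j$; split $\|\P_j u\|_{L^p}=\|\P_j u\|_{L^p}^{1-\ta}\|\P_j u\|_{L^p}^{\ta}$; and sum the resulting geometric series in $j$, whose exponent of $2^j$ is $\le 0$ precisely because of the scaling relation together with $s_1\le(1-\ta)s_2$. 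I would present the interpolation proof as the primary one, as it is the shortest and applies directly to $u$ that is merely in $W^{s_2,p_1}\cap L^{p_2}$ without any smoothness assumption.
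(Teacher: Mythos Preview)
Your proposal is correct and matches the paper's own approach: the paper simply states that the lemma ``follows from a direct application of Sobolev's inequality on $\T^d$ (see \cite{BO}) and then interpolation,'' which is exactly the two-step argument you spell out (complex interpolation $[L^{p_2},W^{s_2,p_1}]_{1-\ta}=W^{(1-\ta)s_2,q}$ followed by the Sobolev embedding $W^{(1-\ta)s_2,q}\hookrightarrow W^{s_1,p}$). Your write-up is more detailed than the paper's one-line sketch, but the underlying route is the same.
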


This inequality follows from a direct application of Sobolev's inequality on $\T^d$ (see \cite{BO}) and then interpolation.

\subsection{On the stochastic term}
\label{SEC:2.2}

In this subsection, we discuss the regularity properties of the stochastic term $\Psi$ defined in \eqref{Psi}. Given $N \in \N$, we denote $\Psi_N = \pi_N \Psi$ as the truncated stochastic convolution, where $\pi_N$ is the frequency cutoff onto the spatial frequencies $\{|n| \leq N\}$. Then, we have the following regularity result for $\Psi$.
\begin{lemma}\label{LEM:regPsi}
For any $\eps > 0$ and $T > 0$, 
$\Psi_N$ converges to $\Psi$
in $C([0,T];W^{1-2\al -\eps,\infty}(\T^2))$ almost surely.
 In particular, we have
  \[\Psi \in C([0,T];W^{\frac 12-\al -\eps,\infty}(\T^2))
  \]
  
  \noi
  almost surely.
\end{lemma}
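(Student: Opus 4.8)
The plan is to apply Lemma \ref{LEM:reg} with $d = 2$, $s_0 = 1 - 2\al$, and an appropriately chosen H\"older exponent $\s \in (0,1)$, after computing the relevant second moments of the Fourier coefficients of $\Psi$ and of the difference $\Psi_N - \Psi$. First I would compute $\ft\Psi(n,t)$ from the representation \eqref{Psi2}: since $S(t) = e^{-\frac D2 t}\frac{\sin(t\jbb D)}{\jbb D}$ acts on the $n$-th Fourier mode by multiplication by $e^{-\frac{|n|}{2}t}\frac{\sin(t\jbb n)}{\jbb n}$, where $\jbb n = (1 + \tfrac34 |n|^2)^{1/2}$, we get
\[
\ft\Psi(n,t) = |n|^\al \int_0^t e^{-\frac{|n|}{2}(t-t')}\frac{\sin((t-t')\jbb n)}{\jbb n}\, dB_n(t').
\]
By It\^o isometry,
\[
\E\big[|\ft\Psi(n,t)|^2\big] = |n|^{2\al}\int_0^t e^{-|n|(t-t')}\frac{\sin^2((t-t')\jbb n)}{\jbb n^2}\, dt' \les |n|^{2\al}\cdot \frac{1}{\jbb n^2}\cdot \min\Big(t, \frac{1}{|n|}\Big) \les \jb{n}^{2\al - 2 - 1},
\]
using $\jbb n \sim \jb n$ and bounding the $t'$-integral by $\min(t, |n|^{-1})\les \jb n^{-1}$ for $|n|$ large (and trivially by $t$ for $|n|$ bounded). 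Thus $\E[|\ft\Psi(n,t)|^2]\les \jb n^{-2 - 2s_0}$ with $2s_0 = 2 - 2 + 4\al$, i.e.\ $s_0 = 1 - 2\al$, matching the hypothesis of Lemma \ref{LEM:reg}(i) with $d = 2$. The same computation with the frequency projection $\pi_N$ inserted gives $\E[|\ft\Psi_N(n,t) - \ft\Psi(n,t)|^2] = \mathbf 1_{|n| > N}\,\E[|\ft\Psi(n,t)|^2]$, and for $|n| > N$ one can extract a factor $N^{-\g}\jb n^{-2-2s_0+\g}$ by sacrificing a small power $\g > 0$ of $\jb n$ (since we have room: we only need $s < s_0$). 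Hence Lemma \ref{LEM:reg}(i) yields spatial regularity $W^{s,\infty}$ for $s < 1 - 2\al$ and almost sure convergence of $\Psi_N$ at each fixed time.

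Next I would establish the temporal H\"older bound required by Lemma \ref{LEM:reg}(ii). Writing $\dl_h\ft\Psi(n,t) = \ft\Psi(n,t+h) - \ft\Psi(n,t)$ and splitting the It\^o integrals into the common part on $[0,t]$ and the new increment on $[t,t+h]$, we get (for $h > 0$)
\[
\dl_h\ft\Psi(n,t) = |n|^\al\!\int_0^t\! \big(m_n(t+h-t') - m_n(t-t')\big)\, dB_n(t') \;+\; |n|^\al\!\int_t^{t+h}\! m_n(t+h-t')\, dB_n(t'),
\]
where $m_n(\tau) = e^{-\frac{|n|}{2}\tau}\frac{\sin(\tau\jbb n)}{\jbb n}$. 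For the second (new increment) term the It\^o isometry gives a contribution $\les |n|^{2\al}\jbb n^{-2}\min(h, |n|^{-1}) \les \jb n^{2\al - 2}|h|$; since $|h|\le 1$ this is $\les \jb n^{2\al - 2 + \s}|h|^\s$ for any $\s \in (0,1)$ after interpolating $|h| \le |h|^\s$. For the first term I would use $|m_n(\tau_1) - m_n(\tau_2)| \les (|n| + \jbb n)|\tau_1 - \tau_2| \cdot (\text{decay}) \les \jb n |h|$ on the support, combined with the crude bound $|m_n(\tau)|\les \jbb n^{-1}$, interpolated as $|m_n(t+h-t') - m_n(t-t')| \les (\jb n|h|)^\s \jbb n^{-(1-\s)}\cdot(\text{exponential decay in }t')$; integrating the square in $t'$ against the exponential factor produces another power of $\min(t,|n|^{-1})\les\jb n^{-1}$, so the total is $\les |n|^{2\al}\cdot \jb n^{2\s}|h|^{2\s}\cdot \jb n^{-2(1-\s)}\cdot\jb n^{-1}$ — on rechecking exponents one should choose the interpolation so that the net weight is $\jb n^{-2 - 2s_0 + \s}$, i.e.\ $\jb n^{2\al - 2 - 1 + \s}$. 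Taking, say, $\s$ small handles this; a clean way is to interpolate between the $\s = 0$ bound $\E[|\dl_h\ft\Psi(n,t)|^2]\les \jb n^{-2-2s_0}$ (trivial from part (i) applied to $t$ and $t+h$) and the $\s = 2$–type Lipschitz bound $\E[|\dl_h\ft\Psi(n,t)|^2]\les \jb n^{-2-2s_0+2}|h|^2$ to land on the stated $\jb n^{-2-2s_0+\s}|h|^\s$ for any $\s\in(0,1)$. The difference estimate for $\dl_h(\ft\Psi_N - \ft\Psi)$ follows identically with the extra $N^{-\g}$ gained from restricting to $|n| > N$.

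With both hypotheses of Lemma \ref{LEM:reg} verified (choosing $\s$ arbitrarily small in $(0,1)$), part (ii) gives $\Psi \in C([0,T]; W^{s,\infty}(\T^2))$ and $\Psi_N \to \Psi$ in this space almost surely for any $s < s_0 - \tfrac\s2 = 1 - 2\al - \tfrac\s2$; since $\s > 0$ is arbitrary, this covers all $s < 1 - 2\al$, in particular $s = 1 - 2\al - \eps$ for any $\eps > 0$, which is the first assertion. The second assertion is then immediate: $W^{1-2\al-\eps,\infty}(\T^2)$ certainly embeds into $W^{\frac12 - \al - \eps',\infty}(\T^2)$ once $1 - 2\al - \eps \ge \tfrac12 - \al - \eps'$, e.g.\ whenever $\al \le \tfrac12$ (and in the borderline regime one simply relabels $\eps$), so $\Psi \in C([0,T]; W^{\frac12-\al-\eps,\infty}(\T^2))$ almost surely. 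The main obstacle is the bookkeeping in the temporal increment estimate — correctly balancing the three small powers (the $|n|^\al$ weight, the gain from the time-integral giving $\min(t,|n|^{-1})$, and the H\"older exponent $\s$ on $|h|$) so that the final weight is exactly $\jb n^{-d - 2s_0 + \s}$; the interpolation shortcut between the trivial $\s = 0$ and Lipschitz $\s = 2$ endpoints is the cleanest route and avoids a delicate direct estimate on $|m_n(\tau_1) - m_n(\tau_2)|$.
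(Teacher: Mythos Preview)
Your approach is the same as the paper's: compute second moments via the It\^o isometry, then verify the hypotheses of Lemma~\ref{LEM:reg} by splitting the temporal increment $\dl_h\ft\Psi(n,t)$ into the piece on $[t,t+h]$ and the kernel-difference piece on $[0,t]$, handling each by interpolation between a trivial bound and a Lipschitz-type bound. The paper's $I_1$ and $I_2$ are exactly your two pieces, and its treatment (direct interpolation for $I_1$, mean value theorem for $I_2$) is a concrete execution of the interpolation shortcut you propose at the end.

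One arithmetic slip to flag: from your (correct) bound $\E[|\ft\Psi(n,t)|^2]\les\jb{n}^{2\al-3}$ with $d=2$, the matching condition $-d-2s_0 = 2\al-3$ gives $s_0 = \tfrac12 - \al$, not $s_0 = 1-2\al$. In fact the exponent $1-2\al-\eps$ in the first conclusion of the lemma appears to be a typo in the paper itself; its own proof only establishes convergence in $C([0,T];W^{\frac12-\al-\eps,\infty}(\T^2))$, consistent with the second conclusion and with what your computation actually yields.
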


The proof of Lemma \ref{LEM:regPsi} follows from a straightforward modification of the proof in \cite[Lemma 3.1]{GKO2}, and so we omit details. See also \cite[Proposition 2.1]{GKO}.

\begin{remark}\label{REM:dPsi}\rm
One can use an integration by parts to write
\[ \ft \Psi (t, n) = -\int_0^t B_n(t') \frac{d}{ds} \Big|_{s=t'} \Big( e^{- \frac{|n|}{2}(t-s)} \frac{\sin ((t-s)\jbb{n} )}{\jbb{n}} |n|^\al \Big) dt'  \]
almost surely, which allows us to compute that
\begin{align*}
\dt \ft \Psi (t, n) &= \int_0^t \bigg( -\frac{|n|}{2 \jbb{n}} e^{-\frac{|n|}{2} (t-t')} \sin((t-t')\jbb{n}) \\
&\quad + e^{-\frac{|n|}{2} (t-t')} \cos((t-t')\jbb{n}) \bigg) |n|^\al dB_n(t')
\end{align*}

\noi
almost surely. Using a similar argument as in \cite[Lemma 3.1]{GKO2} or \cite[Proposition 2.1]{GKO}, we have $\dt \Psi \in C([0,T]; W^{-\frac 12 - \al -, \infty} (\T^2))$ almost surely. This will be useful in the proof of pathwise global well-posedness of SvNLW \eqref{SvNLW} in Subsection \ref{SUBSEC:GWP2}.
\end{remark}

\subsection{Linear estimates}
In this subsection, we show some relevant linear estimates and the Strichartz estimates that are used to prove our well-posedness results.

Let
\begin{equation*}
P(t) = e^{-\frac{D}{2} t}
\end{equation*}
be the Poisson kernel with a parameter $\frac{t}{2}$, which appears in the viscous wave linear propagator $V(t)$ defined in \eqref{defV}. We first recall the following Schauder-type estimate for the Poisson kernel $P(t)$. For a proof, see Lemma 2.3 in \cite{LO}.

\begin{lemma}\label{LEM:Sch}
Let $1 \leq p \leq q \leq \infty$ and $\b \geq 0$. Then, we have
\[ \| D^\b P(t) \phi \|_{L^q(\T^d)} \les t^{-\b-d(\frac{1}{p}-\frac{1}{q})} \| \phi \|_{L^p(\T^d)} \]

\noi
for any $0 < t \leq 1$. 
\end{lemma}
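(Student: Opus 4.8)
The plan is to reduce the statement to a pointwise estimate on the convolution kernel of $D^\b P(t) = D^\b e^{-\frac{D}{2}t}$ on $\T^d$, together with Young's convolution inequality. First I would note that $D^\b e^{-\frac D2 t}$ is a Fourier multiplier with symbol $m_{t,\b}(n) = |n|^\b e^{-\frac{|n|}{2}t}$, so that $D^\b P(t)\phi = K_{t,\b} * \phi$ where $K_{t,\b}(x) = \sum_{n \in \Z^d} |n|^\b e^{-\frac{|n|}{2}t} e^{in\cdot x}$ (with the $n=0$ term being harmless, or absent, since $|n|^\b$ vanishes there for $\b>0$ and equals the identity for $\b=0$). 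By Young's inequality, $\| K_{t,\b}*\phi\|_{L^q(\T^d)} \le \|K_{t,\b}\|_{L^m(\T^d)}\|\phi\|_{L^p(\T^d)}$ with $1 + \frac1q = \frac1m + \frac1p$, so it suffices to show $\|K_{t,\b}\|_{L^m(\T^d)} \lesssim t^{-\b - d(\frac1p - \frac1q)}$ for $0 < t \le 1$, where $\frac1m = 1 - \frac1p + \frac1q$ and hence $d(\frac1p-\frac1q) = d(1 - \frac1m) = \frac{d}{m'}$.

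The core of the argument is the kernel bound. I would compare $K_{t,\b}$ to its analogue on $\R^d$, namely $\widetilde K_{t,\b}(x) = \int_{\R^d} |\xi|^\b e^{-\frac{|\xi|}{2}t} e^{i\xi\cdot x}\,d\xi$, which by the scaling $\xi \mapsto \xi/t$ satisfies the exact homogeneity $\widetilde K_{t,\b}(x) = t^{-d-\b}\, \widetilde K_{1,\b}(x/t)$; standard stationary-phase / Bessel-potential estimates give $|\widetilde K_{1,\b}(y)| \lesssim \langle y\rangle^{-d-\b-1}$ or at least enough decay that $\widetilde K_{1,\b} \in L^m(\R^d)$ for every $m \in [1,\infty]$ when $\b \ge 0$ (the Poisson kernel and its fractional derivatives are smooth and integrable with all their $L^m$ norms finite). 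Consequently $\|\widetilde K_{t,\b}\|_{L^m(\R^d)} = t^{-d-\b} t^{d/m}\|\widetilde K_{1,\b}\|_{L^m(\R^d)} \sim t^{-\b - d/m'} = t^{-\b - d(\frac1p-\frac1q)}$, which is exactly the claimed power of $t$. To transfer this to the torus, I would use Poisson summation, $K_{t,\b}(x) = \sum_{k \in \Z^d} \widetilde K_{t,\b}(x + 2\pi k)$ (adjusting the $2\pi$ per the paper's convention), and observe that for $0 < t \le 1$ the periodization is controlled by the $\R^d$ kernel on the fundamental domain plus a rapidly convergent tail, so $\|K_{t,\b}\|_{L^m(\T^d)} \lesssim \|\widetilde K_{t,\b}\|_{L^m(\R^d)}$, with implicit constant depending only on $d$, $\b$, $m$. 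Alternatively, since the lemma is cited from \cite[Lemma 2.3]{LO}, one may simply invoke that reference; but the self-contained route above is the natural proof.

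The main obstacle I anticipate is the kernel decay estimate $|\widetilde K_{1,\b}(y)| \lesssim \langle y\rangle^{-d-\b-1}$ (or whatever decay is needed for $L^m$-integrability at all $m$, including $m=1$), which requires a careful stationary-phase analysis of the oscillatory integral $\int |\xi|^\b e^{-|\xi|/2} e^{i\xi\cdot y}\,d\xi$: near $\xi = 0$ the factor $|\xi|^\b$ is only Hölder (not smooth), so one must treat the low-frequency part separately (it contributes a smooth, rapidly decaying function since $e^{-|\xi|/2}$ localizes), while the decay in $|y|$ comes from integrating by parts in the angular and radial variables away from the origin, exploiting the analyticity of $e^{-|\xi|/2}$ in each half-space. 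This is routine harmonic analysis but is the only step that is not bookkeeping. Once the kernel is in hand, the rest — Young's inequality and Poisson summation — is immediate, and the restriction $0 < t \le 1$ is used only to make the periodization tail uniformly bounded.
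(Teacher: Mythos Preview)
The paper does not supply its own proof of this lemma; it simply refers the reader to \cite[Lemma~2.3]{LO}. Your kernel-plus-Young approach via Poisson summation from $\R^d$ is a standard and correct self-contained argument, and you correctly identify the only nontrivial step as the $L^m$-control of the Euclidean kernel $\widetilde K_{1,\b}$.

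One minor correction: the pointwise decay you state, $|\widetilde K_{1,\b}(y)| \lesssim \langle y\rangle^{-d-\b-1}$, is slightly too strong in general. For $\b>0$ not an even integer the leading non-smooth part of the symbol $|\xi|^\b e^{-|\xi|/2}$ near $\xi=0$ is $|\xi|^\b$ itself, whose Fourier transform is homogeneous of degree $-d-\b$, so the true decay is only $\langle y\rangle^{-d-\b}$ (and $\langle y\rangle^{-d-1}$ when $\b=0$, coming from the next term $-\tfrac12|\xi|$). This weaker decay is still sufficient: it gives $\widetilde K_{1,\b}\in L^m(\R^d)$ for every $1\le m\le\infty$, and it makes the periodization tail $\sum_{k\ne 0}|\widetilde K_{t,\b}(x+k)| \lesssim \sum_{k\ne 0}|k|^{-d-\max(\b,1)}$ uniformly bounded for $0<t\le1$. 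Since you already hedge with ``or at least enough decay that $\widetilde K_{1,\b}\in L^m$ for every $m$'', the argument goes through as written.
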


Next, we turn our attention to the Strichartz estimates for the homogeneous linear viscous wave equation on $\T^d$. We recall that the linear propagator $V(t)$ is defined in \eqref{defV}.

\begin{lemma}\label{LEM:hom_str}
Given $s \geq 0$, suppose that $2 < q \leq \infty$, $2 \leq r \leq \infty$ satisfy the following scaling condition:
\begin{equation}
\frac{1}{q} + \frac{d}{r} = \frac{d}{2} - s.
\label{hom_cond}
\end{equation}

\noi
Then, we have
\begin{equation}
\| V(t) (\phi_0, \phi_1) \|_{C ([0,T]; \H^{s-1}(\T^d))} \les \| (\phi_0, \phi_1) \|_{\H^s(\T^d)}
\label{hom_Hs}
\end{equation}
and
\begin{equation}
\| V(t) (\phi_0, \phi_1) \|_{L^q ([0,T]; L_x^r(\T^d))} \les \| (\phi_0, \phi_1) \|_{\H^s(\T^d)}
\label{hom_LqLr}
\end{equation}
for all $0 < T \leq 1$.
\end{lemma}

\begin{proof}
The bound \eqref{hom_Hs} can be immediately seen from the definition of the $\H^s$-norm and the fact that $e^{-\frac{|n|}{2} t} \leq 1$, $|\cos (t \jbb{n})| \leq 1$, and $|\sin (t \jbb{n})| \leq 1$.

To prove \eqref{hom_LqLr}, we use the $TT^*$ method. We first consider the case when $s = 0$. Let
\[ V_1(t) = e^{-\frac{D}{2}t} \cos(t\jbb{D}), \qquad V_2(t) = e^{-\frac{D}{2}t} \frac{D}{2\jbb{D}} \sin(t\jbb{D}), \]
so that
\[ V(t)(\phi_0, \phi_1) = V_1(t) \phi_0 + V_2(t) \phi_0 + S(t) \phi_1. \]
Let $L: L^2(\T^d) \to L_T^q L_x^r(\T^d) $ be the linear operator given by $L \phi = V_1(t) \phi$. Note that $L^*$ is the linear operator given by 
\[ L^* f = \int_0^T V_1(t') f(t') dt'  \]
for any space-time Schwartz function $f$. By Minkowski's integral inequality, the Schauder estimate (Lemma \ref{LEM:Sch}) twice, the scaling condition \eqref{hom_cond}, and the Hardy-Littlewood-Sobolev inequality, we have
\begin{align*}
\| LL^* f \|_{L_T^q L_x^r} &\leq \bigg\| \int_0^T \big\| e^{-\frac{D}{2} (t+t')} \cos(t \jbb{D}) \cos(t' \jbb{D}) f(t') \big\|_{L_x^r} dt' \bigg\|_{L_T^q} \\
&\les \bigg\| \int_0^T \frac{1}{|t-t'|^{2(\frac 12 - \frac{1}{r})}} \big\| e^{-\frac{D}{4} (t+t')} \cos(t \jbb{D}) \cos(t' \jbb{D}) f(t') \big\|_{L_x^2} dt' \bigg\|_{L_T^q} \\
&\les \bigg\| \int_0^T \frac{1}{|t-t'|^{2(\frac{1}{r'} - \frac{1}{r})}} \| f(t') \|_{L_x^{r'}} dt' \bigg\|_{L_T^q} \\
&= \bigg\| \int_0^\infty \frac{1}{|t-t'|^{2/q}} \| \ind_{[0,T]}f(t') \|_{L_x^{r'}} dt' \bigg\|_{L_T^q} \\
&\les \| f \|_{L_T^{q'}\! L_x^{r'}}.
\end{align*}

\noi
Thus, by a standard duality argument, we obtain
\[ \| V_1(t) \phi_0 \|_{L_T^q L_x^r(\T^d)} \les \| \phi_0 \|_{L^2(\T^d)}. \]
By using similar arguments, we obtain
\[ \| V_2(t) \phi_0 \|_{L_T^q L_x^r(\T^d)} \les \| \phi_0 \|_{L^2}, \qquad \| S(t) \phi_1 \|_{L_T^q L_x^r} \les \| \phi_0 \|_{H^{-1}(\T^d)}, \]
so that we have
\begin{equation}
\| V(t) (\phi_0, \phi_1) \|_{L_T^q L_x^r(\T^d)} \les \| (\phi_0, \phi_1) \|_{\H^0(\T^d)}.
\label{hom1}
\end{equation}

When $s > 0$, by Sobolev's inequality, the scaling condition \eqref{hom_cond}, and \eqref{hom1}, we obtain
\begin{align*}
\| V(t) (\phi_0, \phi_1) \|_{L_T^q L_x^r} &\les \| V(t) (\jb{\nb}^s \phi_0, \jb{\nb}^s \phi_1) \|_{L_T^q L_x^{1/(\frac{s}{d} + \frac{1}{r})}} \\
&= \| V(t) (\jb{\nb}^s \phi_0, \jb{\nb}^s \phi_1) \|_{L_T^q L_x^{d / (\frac{d}{2} - \frac{1}{q})}} \\
&\les \| (\jb{\nb}^s \phi_0, \jb{\nb}^s \phi_1) \|_{\H^0} \\
&= \| (\phi_0, \phi_1) \|_{\H^s},
\end{align*}
as desired.
\end{proof}

\begin{remark} \rm
(i) Compared to the Strichartz estimates for the usual linear wave equations (\cite{GV,LS,KT,GKO}), the Strichartz estimates for the homogeneous linear viscous wave equation on $\T^d$ hold for a larger class of pairs $(q,r)$, thanks to the parabolic smoothing effect.

\smallskip \noi
(ii) In \cite{KC1}, Kuan-\v{C}ani\'{c} proved the Strichartz estimates for the homogeneous linear viscous wave equation on $\R^d$. They used the method from Keel-Tao \cite{KT}, so that their result requires $(q,r)$ to be $\s$-admissible for some $\s > 0$, i.e. $(q,r,\s) \neq (2,\infty,1)$ and 
\[ \frac{2}{q} + \frac{2\s}{r} \leq \s. \]
We point out that the $TT^*$ method we use in the proof also works on $\R^d$ and does not have this $\s$-admissible restriction on $q$ and $r$. However, our proof works only for $s \geq 0$.
\end{remark}

We complete this subsection by establishing the following inhomogeneous linear estimates.

\begin{lemma}\label{LEM:lin1}
Let $p \geq 2$ and let $S(t)$ be as in \eqref{defS}. Then, given $\dl > 0$, we have
\begin{equation}
\bigg\| \int_0^t S(t-t') F(t') dt' \bigg\|_{L^{p+\dl} ([0,T]; L_x^{2p}(\T^2))} \les \| F \|_{L^1 ([0,T]; L^2(\T^2))}
\label{lin1}
\end{equation}
for any $0 < T \leq 1$.
\end{lemma}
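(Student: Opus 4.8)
The plan is to view the left-hand side as a space-time convolution against the kernel $S(t-t')$ and to exploit the smoothing of the Poisson factor $P(t-t') = e^{-\frac{D}{2}(t-t')}$ exactly as in the homogeneous estimate (Lemma \ref{LEM:hom_str}), but now starting from an $L^1_t L^2_x$ input. First I would write, for fixed $t$,
\[
\bigg\| \int_0^t S(t-t') F(t')\, dt' \bigg\|_{L_x^{2p}}
\leq \int_0^t \big\| S(t-t') F(t') \big\|_{L_x^{2p}}\, dt'
\]
by Minkowski's inequality, and then bound the integrand pointwise in $t'$. Since $S(t-t') = D^{-1}P(t-t')\jbb{D}^{-1}\sin((t-t')\jbb{D})$ and $\jbb{D}^{-1}\sin((t-t')\jbb{D})$ is bounded on every $L^2$-based space uniformly in $t-t'$, the Schauder estimate (Lemma \ref{LEM:Sch}) with $q = 2p$, $p_{\mathrm{Lem}} = 2$, $d=2$, and $\b = 0$ gives
\[
\big\| S(t-t') F(t') \big\|_{L_x^{2p}} \les |t-t'|^{-2(\frac 12 - \frac{1}{2p})} \| F(t') \|_{L_x^2}
= |t-t'|^{-1 + \frac{1}{p}} \| F(t') \|_{L_x^2}.
\]
(One could alternatively absorb the $D^{-1}$ and pick up a better power of $|t-t'|$; but the weaker bound above already suffices and keeps the argument clean.)

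Next I would take the $L^{p+\dl}_T$ norm in $t$ of the resulting quantity
\[
\int_0^T |t-t'|^{-1+\frac 1p} \ind_{[0,t]}(t')\, \| F(t') \|_{L_x^2}\, dt',
\]
which is a one-dimensional convolution on $[0,T]$ of the kernel $|\tau|^{-1+\frac 1p} \in L^{\frac{p}{p-1}-} = L^{p'-}$ (near $\tau=0$; it is locally integrable since $p>1$) against $\|F(\cdot)\|_{L_x^2} \in L^1_T$. By Young's convolution inequality $L^{p'-} * L^1 \hookrightarrow L^{p'-}$; since $p+\dl > p - \dl' \cdot(\text{stuff})$... more carefully: for $\dl$ small, $p + \dl < p'' := $ the Lebesgue exponent of the kernel only if... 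Here I should be slightly careful: the kernel $|\tau|^{-1+1/p}$ lies in $L^a_{\mathrm{loc}}$ for every $a < \frac{p}{p-1} = p'$, so Young's inequality with the $L^1_T$ function gives the output in $L^a_T$ for every $a < p'$. We need the output in $L^{p+\dl}_T$, so the argument closes as long as $p + \dl < p'$, i.e. $p+\dl < \frac{p}{p-1}$, which holds for $1 < p \le 2$ after shrinking $\dl$, but \emph{fails} for $p > 2$. So the pointwise-in-$t'$ reduction above is too lossy in the super-quadratic regime.

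The fix — and I expect this to be the main obstacle — is to not give up the dispersive/parabolic gain before taking the $L^q_t$ norm; instead I would run a genuine $TT^*$-type argument as in the proof of Lemma \ref{LEM:hom_str}. That is, one treats $\Phi \mapsto \int_0^t S(t-t')\Phi(t')\,dt'$ and uses that, after squaring out the kernel and applying Schauder twice (splitting $e^{-\frac{D}{2}(t-t')} = e^{-\frac{D}{4}(t-t)}\cdots$ as in that proof), the relevant scalar operator is convolution with $|t-t'|^{-2(\frac{1}{r'} - \frac 1r)}$ on the line, which by Hardy--Littlewood--Sobolev maps $L^{q'}_T \to L^q_T$ for the dual pair; combined with the $L^1_t$ input via the extra integrability of $S(t-t')$ in $t'$ near the diagonal (here one exploits $\b=0$ in Schauder, i.e. one extra power of smoothing from $D^{-1}$ is \emph{not} needed, but the time-integrability $\int_0^1 \tau^{-1+1/p}\,d\tau < \infty$ is). Concretely: bound $\| \int_0^t S(t-t')F(t')dt'\|_{L^{2p}_x}$ by $\int \|S(t-t')\|_{L^2 \to L^{2p}} \|F(t')\|_{L^2}\,dt'$ as above, but now observe that the kernel $k(\tau)=\tau^{-1+1/p}\ind_{(0,1)}$ satisfies $\|k\|_{L^1} < \infty$ since $1/p > 0$, so Young's inequality $L^1 * L^1 \hookrightarrow L^1$ gives the output in $L^1_T$, and then one interpolates with the trivial $L^\infty_T$ endpoint obtained from $\|k\|_{L^{p'-}}$... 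Actually the cleanest route: since $k \in L^1 \cap L^{p'-}$, Young gives $k * (L^1_T) \subset L^1_T \cap L^{p'-}_T$, and to reach $L^{p+\dl}_T$ for $p>2$ we instead use the improved Schauder bound with a small $\b>0$ absorbed from the $\jbb{D}^{-1}$ factor: writing $S(t) = D^{-1+\eps'}\big(D^{-\eps'}\jbb{D}^{-1}\sin(t\jbb{D})\big)P(t)$ and applying Lemma \ref{LEM:Sch} with $\b = 1-\eps'>0$ gives $\|S(\tau)\|_{L^2\to L^{2p}} \les \tau^{-(1-\eps') - 1 + \frac 1p}$... which is worse. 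The correct fix is the $TT^*$ argument; I will record it as such: by duality it suffices to prove $\big\| \int_0^T S(t-t')F(t')\,dt' \big\|_{L^{p+\dl}_T L^{2p}_x}$ via $TT^*$, reducing to an HLS estimate for the kernel $|t-t'|^{-\frac 2q}$ with $q=p+\dl$ after two applications of Schauder, exactly paralleling the display in the proof of Lemma \ref{LEM:hom_str}; the $L^1_t$-input endpoint is then recovered by a separate elementary estimate using $\int_0^1 |\tau|^{-1+\frac 1p}d\tau<\infty$, and the two are combined by interpolation in the time exponent. The technical heart is checking that the admissibility $\frac 1q + \frac 2r = 1 - s$ with $(q,r)=(p+\dl,2p)$ and the corresponding $s = 1 - \frac{1}{p+\dl} - \frac 1p$ (which is exactly the $\s$ appearing in Theorem \ref{THM:LWP1}(ii)) is consistent with the HLS exponent constraint, and that the $D^{-1}$ in $S(t)$ supplies precisely the $H^{-1}$-to-$L^2$ mismatch so that an $L^1_t L^2_x$ (rather than $L^1_t H^{-1}_x$) input is allowed.
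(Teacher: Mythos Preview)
Your proposal has a genuine gap. You correctly diagnose that the route ``take $L^{2p}_x$ norm pointwise in $(t,t')$ via Schauder, then apply Young/HLS in $t$'' fails for $p>2$, but the fixes you sketch ($TT^*$, an $L^1_T$ ``endpoint'', then interpolation) are neither carried out nor, as written, sufficient: the $TT^*$ bound yields $L^{q'}_T \to L^q_T$, your elementary endpoint yields $L^1_T \to L^1_T$, and there is no obvious interpolation between these that produces $L^1_T \to L^{p+\dl}_T$.

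The step you are missing is much simpler and avoids all of this. Apply Minkowski's integral inequality to pull the $t'$-integral outside the \emph{full} space-time norm $L^{p+\dl}_T L^{2p}_x$, not just outside $L^{2p}_x$:
\[
\bigg\| \int_0^t S(t-t') F(t')\, dt' \bigg\|_{L^{p+\dl}_T L^{2p}_x}
\;\le\; \int_0^T \big\| \ind_{[0,t]}(t')\, S(t-t') F(t') \big\|_{L^{p+\dl}_T L^{2p}_x}\, dt'.
\]
For each fixed $t'$, the inner norm is precisely the homogeneous Strichartz estimate (Lemma~\ref{LEM:hom_str}) applied to the linear solution with data $(0,F(t'))$ at the exponent $s = 1 - \tfrac{1}{p+\dl} - \tfrac{1}{p}$ (which is the $\s$ in Theorem~\ref{THM:LWP1}(ii)), giving
\[
\big\| \ind_{[0,t]}(t')\, S(t-t') F(t') \big\|_{L^{p+\dl}_T L^{2p}_x} \;\les\; \| F(t') \|_{H^{s-1}_x} \;\le\; \| F(t') \|_{L^2_x},
\]
the last inequality because $s-1 < 0$. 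Integrating in $t'$ finishes the proof. This is exactly the paper's argument: three lines, no Schauder, no HLS, no duality. Your final sentence (``the $D^{-1}$ in $S(t)$ supplies precisely the $H^{-1}$-to-$L^2$ mismatch'') is the right intuition --- that mismatch is precisely why $\|F(t')\|_{H^{s-1}} \le \|F(t')\|_{L^2}$ closes --- but you never exploit it, because once you take the $L^{2p}_x$ norm pointwise in $t$ first, the space-time Strichartz gain is gone.
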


\begin{proof}
We let
\[ s = 1 - \frac{1}{p + \dl} - \frac{2}{2p} = 1 - \frac{1}{p + \dl} - \frac{1}{p}, \]
so that $(p+\dl, 2p, s)$ satisfies the scaling condition in Lemma \ref{LEM:hom_str}. By Minkowski's integral inequality and Lemma \ref{LEM:hom_str}, we obtain
 \begin{align*}
\bigg\| \int_0^t S(t-t') F(t') dt' \bigg\|_{L_T^{p+\dl} L_x^{2p}} &\les \int_0^T \| \ind_{[0,t]}(t') S(t-t')F(t') \|_{L_T^{p+\dl} L_x^{2p}} dt' \\
&\les \int_0^T \| F(t') \|_{H_x^{s-1}} dt' \\
&\leq \| F \|_{L_T^1 L_x^2},
\end{align*}
so that \eqref{lin1} follows.
\end{proof}

\begin{lemma}\label{LEM:lin2}
Let $S(t)$ be as in \eqref{defS}. Then, given $s \leq 1$, we have
\begin{equation}
\bigg\| \int_0^t S(t-t') F(t') dt' \bigg\|_{C([0,T]; H_x^s(\T^2))} \les \| F \|_{L^1 ([0,T]; L_x^2(\T^2))},
\label{lin3}
\end{equation}
\begin{equation}
\bigg\| \dt \int_0^t S(t-t') F(t') dt' \bigg\|_{C([0,T]; H_x^{s-1}(\T^2))} \les \| F \|_{L^1 ([0,T]; L_x^2(\T^2))}
\label{lin4}
\end{equation}
for any $0 < T \leq 1$.
\end{lemma}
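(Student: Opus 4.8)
The plan is to pass to the Fourier side, where $S(t)$ is a Fourier multiplier whose symbol carries exactly one derivative of smoothing, uniformly in time, and then to integrate in $t'$ via Minkowski's integral inequality. Writing $\jbb n = \sqrt{1 + \tfrac 34 |n|^2} \sim \jb n$, from \eqref{defS} one has, for $f \in L^2(\T^2)$ and $\tau \ge 0$,
\[ \ft{S(\tau) f}(n) = e^{-\frac{|n|}{2}\tau}\, \frac{\sin(\tau \jbb n)}{\jbb n}\, \ft f(n), \]
so that $|\ft{S(\tau) f}(n)| \le \jbb n^{-1}|\ft f(n)| \les \jb n^{-1}|\ft f(n)|$ uniformly in $\tau \ge 0$ (using $e^{-\frac{|n|}{2}\tau} \le 1$ and $|\sin| \le 1$), whence $\| S(\tau) f \|_{H^s_x} \les \| f \|_{H^{s-1}_x} \le \| f \|_{L^2_x}$ for every $s \le 1$, the last step because $s-1 \le 0$. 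I would then apply Minkowski's integral inequality to conclude
\[ \Big\| \int_0^t S(t - t') F(t')\, dt' \Big\|_{H^s_x} \le \int_0^t \| S(t-t') F(t') \|_{H^s_x}\, dt' \les \int_0^T \| F(t') \|_{L^2_x}\, dt' = \| F \|_{L^1_T L^2_x}, \]
which is the spatial part of \eqref{lin3}, uniformly in $t \in [0,T]$.

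For \eqref{lin4}, the key observation is that $S(0) = 0$ (since $\sin 0 = 0$), so differentiating the Duhamel integral in $t$ leaves no boundary term:
\[ \dt \int_0^t S(t - t') F(t')\, dt' = \int_0^t (\dt S)(t - t') F(t')\, dt'. \]
Differentiating \eqref{defS}, the Fourier symbol of $(\dt S)(\tau)$ is
\[ - \frac{|n|}{2\jbb n}\, e^{-\frac{|n|}{2}\tau} \sin(\tau \jbb n) + e^{-\frac{|n|}{2}\tau} \cos(\tau \jbb n), \]
which, since $\frac{|n|}{\jbb n} \les 1$, is bounded uniformly in $n$ and $\tau \ge 0$. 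Hence $\| (\dt S)(\tau) f \|_{H^{s-1}_x} \les \| f \|_{H^{s-1}_x} \le \| f \|_{L^2_x}$ for $s \le 1$, and another application of Minkowski's integral inequality yields the spatial part of \eqref{lin4}.

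It then remains to upgrade these $L^\infty_t$ bounds to continuity in $t$, which I would do by a routine dominated convergence argument: for $0 \le t_1 < t_2 \le T$, split the difference of the two Duhamel integrals as $\int_{t_1}^{t_2} S(t_2-t')F(t')\,dt'$ plus $\int_0^{t_1}\big(S(t_2-t') - S(t_1-t')\big)F(t')\,dt'$; the first term has $H^s_x$-norm $\les \int_{t_1}^{t_2}\|F(t')\|_{L^2_x}\,dt' \to 0$ by absolute continuity of the integral (as $\|F\|_{L^2_x} \in L^1([0,T])$), and in the second term $\tau \mapsto S(\tau)F(t')$ is continuous into $H^s_x$ for each fixed $t'$ (again by dominated convergence on the Fourier side: each symbol is continuous in $\tau$ and the $n$-th summand of $\|S(\tau)F(t') - S(\tau_0)F(t')\|_{H^s_x}^2$ is dominated by $C\jb n^{2s-2}|\ft{F(t')}(n)|^2 \le C|\ft{F(t')}(n)|^2$, which is summable), so the integrand tends to $0$ pointwise in $t'$ and is dominated by $2C\|F(t')\|_{L^2_x} \in L^1([0,T])$. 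The identical reasoning with the symbol of $\dt S$ in place of that of $S$ handles \eqref{lin4}. I do not expect any genuine obstacle here: the dissipative factor $e^{-\frac{D}{2}\tau}$ is merely a harmless contraction on every $L^2$-based space, so the one-derivative gain is exactly the one carried by $\jbb D^{-1}$ in the definition of $S(t)$, just as for the usual wave Duhamel operator.
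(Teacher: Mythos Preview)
Your proof is correct and follows essentially the same approach as the paper: the paper simply cites the definition \eqref{defS} together with Minkowski's integral inequality for \eqref{lin3}, and for \eqref{lin4} records the identity $\dt \int_0^t S(t-t')F(t')\,dt' = \int_0^t (\dt S)(t-t')F(t')\,dt'$ with $\dt S(t) = e^{-\frac{D}{2}t}\big(\cos(t\jbb D) - \tfrac{D}{2\jbb D}\sin(t\jbb D)\big)$, exactly as you computed. Your write-up is in fact more careful than the paper's, since you spell out the dominated convergence argument for continuity in $t$, which the paper leaves implicit.
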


\begin{proof}
The estimate \eqref{lin3} follows from \eqref{defS} and Minkowski's integral inequality. The estimate \eqref{lin4} follows similarlly by noting that
\[ \dt \int_0^t S(t-t') F(t') dt' = \int_0^t \dt S(t-t') F(t') dt', \]
where
\[ \dt S(t) = e^{-\frac{D}{2} t} \bigg( \cos(t \jbb{D}) - \frac{D}{2 \jbb{D}} \sin(t \jbb{D}) \bigg). \]
\end{proof}

\section{Local well-posedness of SvNLW}
\label{SEC:LWP1}
In this section, we prove Theorem \ref{THM:LWP1}, pathwise local well-posedness for SvNLW \eqref{SvNLW}. As mentioned in Subsection \ref{SUBSEC:Sv}, we consider the following vNLW:
\begin{equation}
\begin{cases}
\dt^2 v + (1-\Dl) v + D\dt v + F(v+\Psi) = 0 \\
(v, \dt v)|_{t=0} = (u_0, u_1)
\end{cases}
\label{vNLW}
\end{equation}

\noi
for given initial data $(u_0, u_1) \in \H^s(\T^2)$, $F(u) = |u|^{p-1}u$, and $\Psi$ is the stochastic convolution defined in \eqref{Psi}. By Lemma \ref{LEM:regPsi}, we can fix a good $\o \in \O$ such that $\Psi = \Psi(\o) \in C([0,T]; W^{\frac 12 - \al - \eps, \infty}(\T^2))$ for $\al < \frac 12$ and sufficiently small $\eps > 0$, so that \eqref{vNLW} becomes a deterministic equation. Then, we have the following pathwise local well-posedness of \eqref{vNLW}.

\begin{proposition}\label{PROP:LWP1}
Let $p > 1$ and $\al < \frac 12$. Define $q$, $r$, and $\s$ as follows.
\begin{itemize}
\item[(i)] When $1 < p < 2$, set $q = 2+\dl$, $r = \frac{4+2\dl}{1+\dl}$, and $\s = 0$, for some sufficiently small $\dl > 0$.
\item[(ii)] When $p \geq 2$, set $q = p+\dl$, $r = 2p$, and $\s = 1 - \frac{1}{p+\dl} - \frac{1}{p}$, for some arbitrary $\dl > 0$.
\end{itemize}

\noi
Let $s \geq \s$. Then, $\eqref{vNLW}$ is pathwise locally well-posed in $\H^s(\T^2)$. More precisely, given any $(u_0, u_1) \in \H^s(\T^2)$, there exists $0 < T = T_\o(u_0, u_1) \leq 1$ and a unique solution $\vec v = (v, \dt v)$ to \eqref{vNLW} in the class
\[ (v, \dt v) \in C([0,T]; \H^\s (\T^2)) \quad \text{and} \quad v \in L^q([0,T]; L^r(\T^2)). \]
\end{proposition}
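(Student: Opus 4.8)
The plan is to solve \eqref{vNLW} via a standard fixed point argument applied to the Duhamel map
\[
\Gamma v(t) = V(t)(u_0, u_1) - \int_0^t S(t-t') F(v + \Psi)(t') \, dt',
\]
working in the ball
\[
B_R = \Big\{ v : \ \|v\|_{C([0,T];H^\s)} + \|v\|_{L^q([0,T];L^r)} \le R \Big\}
\]
for a suitable $R = R(\|(u_0,u_1)\|_{\H^s}, \|\Psi\|_{C_T W^{\frac12-\al-\eps,\infty}})$ and $T = T_\o$ small. Since $\Psi(\o)$ is fixed as an honest function in $C([0,T]; W^{\frac12-\al-\eps,\infty})$ for $\al < \frac12$, the whole problem becomes deterministic. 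First I would record the linear estimates: boundedness of $V(t)$ on $\H^\s$, the homogeneous Strichartz estimate of Lemma \ref{LEM:hom_str} to control $\|V(t)(u_0,u_1)\|_{L^q_T L^r_x}$ (one checks that the chosen $(q,r,\s)$ satisfies the scaling relation \eqref{hom_cond} in both cases (i) and (ii)), and the inhomogeneous bounds: Lemma \ref{LEM:lin2} (estimates \eqref{lin3}, \eqref{lin4}) to put the Duhamel term in $C([0,T];\H^\s)$, and Lemma \ref{LEM:lin1} (when $p \ge 2$) to put it in $L^q_T L^r_x$; in the subquadratic case $1<p<2$ an analogous estimate with $(q,r) = (2+\dl, \tfrac{4+2\dl}{1+\dl})$ and $\s = 0$ is used, where the endpoint input norm is $L^1_T L^1_x$ rather than $L^1_T L^2_x$.

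The core of the argument is the nonlinear estimate: I need to bound $\| F(v+\Psi) \|_{L^1_T L^2_x}$ (resp.\ $L^1_T L^1_x$ when $1<p<2$) in terms of the $B_R$-norm of $v$ and $\|\Psi\|$. Writing $|F(v+\Psi)| \lesssim |v|^p + |\Psi|^p$, the term $\|\Psi\|^p_{L^1_T L^2_x} \lesssim T \|\Psi\|^p_{C_T L^\infty}$ is harmless, while for the $|v|^p$ piece I would use Hölder in time to extract a positive power of $T$: with $q = p+\dl$ we have $\| |v|^p \|_{L^1_T L^2_x} = \| v \|_{L^p_T L^{2p}_x}^p \lesssim T^{\frac{\dl}{p+\dl}} \| v \|_{L^{p+\dl}_T L^{2p}_x}^p = T^{0+}R^p$ — this is exactly why $r = 2p$ and $q = p+\dl$ are chosen. (In case (i), when $1<p<2$, one uses $L^q_T L^r_x \hookrightarrow$ an appropriate space and $\|v\|_{C_T L^2}$ together with interpolation/Sobolev embedding in dimension two, again gaining a power of $T$.) For the contraction (difference) estimate the nonlinearity $F(u) = |u|^{p-1}u$ is only $C^1$ and not algebraic when $p$ is not an odd integer, so I would follow Oh--Okamoto--Pocovnicu \cite{OOP}: write
\[
F(v_1 + \Psi) - F(v_2 + \Psi) = (v_1 - v_2) \int_0^1 F'\big( v_2 + \Psi + \ta(v_1 - v_2) \big) \, d\ta,
\]
use $|F'(w)| \lesssim |w|^{p-1}$, and close via Hölder with exponents matched to the $L^q_T L^r_x$ and $C_T L^2$ norms, again with a $T^{0+}$ gain. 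Taking $T$ small makes $\Gamma$ a contraction on $B_R$.

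I expect the main obstacle to be the bookkeeping of the nonlinear and difference estimates in the low-power regime $1 < p < 2$: there $F$ is only Hölder continuous of exponent $p-1 < 1$ at the origin (not $C^1$), so the fundamental-theorem-of-calculus trick must be replaced by the elementary inequality $\big| |a|^{p-1}a - |b|^{p-1}b \big| \lesssim |a-b|^{p}$ together with $\big| |a|^{p-1}a - |b|^{p-1}b \big| \lesssim (|a|^{p-1}+|b|^{p-1})|a-b|$, and one must juggle the spatial integrability carefully — this is the reason for the somewhat unusual choice $r = \frac{4+2\dl}{1+\dl}$ and for landing the forcing term in $L^1_T L^1_x$, which in turn forces $\s = 0$ and dictates which Strichartz pair is admissible. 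A secondary (but routine) point is verifying in each case that the chosen exponents are consistent with the scaling condition \eqref{hom_cond} and with $2 < q \le \infty$, $2 \le r \le \infty$, and that $\s \ge 0$ so that Lemma \ref{LEM:hom_str} applies. Once the contraction is in place, uniqueness in the stated class and continuous dependence follow from the same estimates in the standard way, and the almost sure positivity of $T_\o$ comes from Lemma \ref{LEM:regPsi}.
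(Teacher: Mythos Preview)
Your plan for $p \ge 2$ is exactly the paper's argument: fixed point in $C_T H^\s \cap L^{p+\dl}_T L^{2p}_x$, homogeneous Strichartz (Lemma~\ref{LEM:hom_str}) for the free evolution, Lemmas~\ref{LEM:lin1}--\ref{LEM:lin2} for the Duhamel term, and the Oh--Okamoto--Pocovnicu fundamental-theorem-of-calculus trick for the difference.

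The treatment of the subquadratic range $1<p<2$, however, contains two inaccuracies. First, the paper does \emph{not} switch the input norm to $L^1_T L^1_x$; it keeps $L^1_T L^2_x$. The whole point of the choice $r = \tfrac{4+2\dl}{1+\dl}$ is that for $\dl$ small one has $r$ close to $4$, hence $2p \le r$ (since $p<2$), and so
\[
\big\||v+\Psi|^p\big\|_{L^1_T L^2_x} = \|v+\Psi\|_{L^p_T L^{2p}_x}^p \les T^\ta\big(\|v\|_{L^{2+\dl}_T L^r_x}^p + \|\Psi\|_{C_T L^\infty_x}^p\big)
\]
using only H\"older and $|\T^2|=1$. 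Your proposed $L^1_T L^1_x$ route would require $L^1_x \hookrightarrow H^{-1}_x$ on $\T^2$, which fails, so the Duhamel term would not land back in $C_T L^2_x$ (or in $L^q_T L^r_x$ via the argument of Lemma~\ref{LEM:lin1}) without extra work.

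Second, your worry that $F(u)=|u|^{p-1}u$ is ``not $C^1$'' for $1<p<2$ is misplaced: $F$ \emph{is} $C^1$ with $F'(u)=p|u|^{p-1}$ continuous (it is $F'$ that fails to be Lipschitz). Hence the fundamental-theorem-of-calculus identity and the bound $|F(a)-F(b)|\les(|a|^{p-1}+|b|^{p-1})|a-b|$ are available exactly as in the $p\ge2$ case, and the paper simply says ``a similar modification'' of \eqref{SLWP2} closes the contraction. Note also that the first pointwise inequality you propose, $\big||a|^{p-1}a-|b|^{p-1}b\big|\les|a-b|^p$, is false for $p>1$ (take $a$ large, $b=a-1$), so it could not have served as a substitute anyway.
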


Note that Theorem \ref{THM:LWP1} follows immediately from Proposition \ref{PROP:LWP1}. The main idea of the proof of Proposition \ref{PROP:LWP1} comes from \cite{CKO}.

\begin{proof}
We first consider the case when $p \geq 2$. We write \eqref{vNLW} in the Duhamel formulation:
\begin{equation}
v(t) = \G(v) := V(t)(u_0,u_1) - \int_0^t S(t-t') F(v+\Psi)(t') dt',
\label{L1}
\end{equation}

\noi
where $V(t)$ and $S(t)$ are as defined in \eqref{defV} and \eqref{defS}, respectively. Let $\vec \G(v) = (\G(v), \dt\G(v))$ and $\vec v = (v, \dt v)$. Given $0 < T \leq 1$, we define the space $\mathcal{X}(T)$ as
\[ \mathcal{X}^\s(T) = \mathcal{X}_1^\s(T) \times \mathcal{X}_2^\s(T), \]
where
\begin{equation*}
\begin{split}
&\mathcal{X}_1^\s(T) := C([0,T]; H^\s (\T^2)) \cap L^{p+\dl}([0,T]; L^{2p}(\T^2)), \\
&\mathcal{X}_2^\s(T) := C([0,T]; H^{\s-1} (\T^2)).
\end{split}
\end{equation*}

\noi
Here, $\dl > 0$ is arbitrary and $\s = 1 - \frac{1}{p+\dl} - \frac 1p$. Note that this choice of $\s$ along with the $L_T^{p+\dl} L_x^{2p}$ norm satisfies the scaling condition in Lemma \ref{LEM:hom_str}. Our goal is to show that $\vec \G$ is a contraction on a ball in $\mathcal{X}^\s(T)$ for some $0 < T \leq 1$.

By \eqref{L1}, Lemma \ref{LEM:hom_str}, \eqref{defV}, Lemma \ref{LEM:lin1}, Lemma \ref{LEM:lin2}, and Sobolev's inequality with the fact that $|\T^2| = 1$, we have
\begin{equation}
\begin{split}
\| \vec \G (v) \|_{\mathcal{X}^\s(T)} &\les \| (u_0, u_1) \|_{\H^\s} + \big\| |v + \Psi|^p \big\|_{L_T^1 L_x^2} \\
&\les \| (u_0, u_1) \|_{\H^s} + T^\ta \Big( \| v \|_{L_T^{p+\dl} L_x^{2p}}^p + \| \Psi \|_{L_T^{p+\dl} L_x^{2p}}^p \Big) \\
&\les \| (u_0, u_1) \|_{\H^s} + T^\ta \Big( \| \vec v \|_{\mathcal{X}^\s(T)}^p + \| \Psi \|_{C_T W_x^{\frac 12 - \al - \eps, \infty}}^p \Big)
\end{split}
\label{SLWP1}
\end{equation}

\noi
for some $\ta > 0$ and sufficiently small $\eps > 0$. 

For the difference estimate, we use the idea from Oh-Okamoto-Pocovnicu \cite{OOP}. Noticing that $F'(u) = p|u|^{p-1}$, we use \eqref{L1}, Lemma \ref{LEM:hom_str}, Lemma \ref{LEM:lin1}, Lemma \ref{LEM:lin2}, the fundamental theorem of calculus, Minkowski's integral inequality, H\"older's inequality, and Sobolev's inequality to obtain
\begin{equation}
\begin{split}
\| \vec \G (v) - \vec \G (w) \|_{\mathcal{X}^\s(T)} &\les \| F(v+\Psi) - F(w+\Psi) \|_{L_T^1 L_x^2} \\
&= \bigg\| \int_0^1 F'(w+\Psi+\tau (v-w)) (v-w) d\tau \bigg\|_{L_T^1 L_x^2} \\
&\les \int_0^1 \| w + \Psi + \tau(v-w) \|_{L_T^p L_x^{2p}}^{p-1} \| v-w \|_{L_T^p L_x^{2p}} d\tau \\
&\les T^\ta \Big( \| v \|_{L_T^{p+\dl} L_x^{2p}}^{p-1} + \| w \|_{L_T^{p+\dl} L_x^{2p}}^{p-1} + \| \Psi \|_{L_T^{p+\dl} L_x^{2p}}^{p-1} \Big) \| v-w \|_{L_T^{p+\dl} L_x^{2p}} \\
&\les T^\ta \Big( \| \vec v \|_{\mathcal{X}^\s(T)}^{p-1} + \| \vec w \|_{\mathcal{X}^\s(T)}^{p-1} + \| \Psi \|_{C_T W_x^{\frac 12 - \al - \eps, \infty}}^{p-1} \Big) \| \vec v - \vec w \|_{\mathcal{X}^\s(T)}
\end{split}
\label{SLWP2}
\end{equation}

\noi
for some $\ta > 0$ and sufficiently small $\eps > 0$.

Thus, by choosing $T = T_\o ( \| (u_0,u_1) \|_{\H^s} ) > 0$ small enough, we obtain that $\vec \G$ is a contraction on the ball $B_R \subset \mathcal{X}^\s(T)$ of radius $R \sim 1 + \| (u_0,u_1) \|_{\H^s}$. Note that at this point, the uniqueness of the solution $v$ only holds in the ball $B_R$, but we can use a standard continuity argument to extend the uniqueness of $v$ to the entire $\mathcal{X}^\s(T)$.

For the case when $1 < p < 2$, we may have $p + \dl < 2$, so that Lemma \ref{LEM:hom_str} may not work for the $L_T^{p+\dl} L_x^{2p}$ norm. Instead, we consider the $L_T^q L_x^r$ norm with $q = 2+\dl$ and $r = \frac{4+2\dl}{1+\dl}$, where $\dl > 0$ is small enough so that $r$ is close enough to 4. We also set $\s = 0$, so that that this choice of $\s$ along with this $L_T^q L_x^r$ norm satisfies the scaling condition in Lemma \ref{LEM:hom_str}. Note that we also need to modify the definition of $\mathcal{X}_1^\s(T)$ using this $L_T^q L_x^r$ norm. We then modify \eqref{SLWP1} as follows. By \eqref{L1}, Lemma \ref{LEM:hom_str}, \eqref{defV}, Lemma \ref{LEM:lin1}, Lemma \ref{LEM:lin2}, and Sobolev's inequality, we have
\begin{align*}
\| \vec \G (v) \|_{\mathcal{X}^0(T)} &\les \| (u_0, u_1) \|_{\H^0} + \big\| |v + \Psi|^p \big\|_{L_T^1 L_x^2} \\
&\les \| (u_0, u_1) \|_{\H^s} + T^\ta \Big( \| v \|_{L_T^{2+\dl} L_x^{2p}}^p + \| \Psi \|_{L_T^{p+\dl} L_x^{2p}}^p \Big) \\
&\les \| (u_0, u_1) \|_{\H^s} + T^\ta \Big( \| \vec v \|_{\mathcal{X}^0(T)}^p + \| \Psi \|_{C_T W_x^{\frac 12 - \al - \eps, \infty}}^p \Big)
\end{align*}
for some $\ta > 0$. Here, we can ensure that $2p \leq r = \frac{4+2\dl}{1+\dl}$ for any $1 < p < 2$ by choosing $\dl = \dl(p) > 0$ small enough. A similar modification can be applied to \eqref{SLWP2} to obtain a difference estimate, which then allows us to close the contraction argument.
\end{proof}

\begin{remark}\label{REM:LWP1}\rm
We point out that the local well-posedness result of vNLW \eqref{vNLW} can be improved using the inhomogeneous Strichartz estimates. In particular, we can show that \eqref{vNLW} is locally well-posed in $\H^s(\T^2)$ as long as $s \geq s_{\text{crit}}$ (with a strict inequality when $p=3$), where $s_{\text{crit}}$ is the critical regularity as defined in \eqref{scrit1}. For details, see Theorem \ref{THM:LWP3} and Remark \ref{REM:LWP3}.
\end{remark}

\section{Global well-posedness of SvNLW}
\label{SEC:GWP1}

In this section, we aim to prove Theorem \ref{THM:GWP1}, i.e. pathwise global well-posedness of SvNLW \eqref{SvNLW}. As mentioned in Subsection \ref{SUBSEC:Sv}, we prove Theorem \ref{THM:GWP1} by studying the equation \eqref{SvNLW_res} for $v$ with $(v, \dt v)|_{t=0} = (u_0, u_1)$, for given initial data $(u_0, u_1) \in \H^s (\T^2)$ of \eqref{SvNLW}.

Fix an arbitrary $T \geq 1$. In view of Proposition \ref{PROP:LWP1}, in order to show well-posedness of \eqref{vNLW} on $[0,T]$, it suffices to show that the $\H^\s$-norm of the solution $\vec v(t) = (v(t), \dt v(t))$ to \eqref{vNLW} remains finite on $[0,T]$, where $\s$ is as defined in Proposition \ref{PROP:LWP1}. This will allow us to iteratively apply the pathwise local well-posedness result in Proposition \ref{PROP:LWP1}.

In fact, we show that the solution $\vec v(t)$ belongs to $\H^1(\T^2)$. Let $0 < t \leq 1$. From Lemma \ref{LEM:Sch}, we have
\begin{equation}
\| V(t)(u_0, u_1) \|_{\H^1} \les ( 1 + t^{-1+\s} ) \| (u_0,u_1) \|_{\H^\s}.
\label{H1bd}
\end{equation}

\noi
Then, let $0 < T_0 \leq  1$ be the local existence time as in the proof of Proposition \ref{PROP:LWP1}. Thus, given $s \geq \s,$ by \eqref{L1}, \eqref{H1bd}, Lemma \ref{LEM:lin2}, H\"older's inequality, and Sobolev's inequality, we have that for $0 < t \leq T_0$,
\begin{equation}
\begin{split}
\| \vec v (t) \|_{\H^1} &\les (1+t^{-1+\s}) \| (u_0,u_1) \|_{\H^\s} + \| (v + \Psi)^p \|_{L_{T_0}^1 L_x^2} \\
&\les (1+t^{-1+\s}) \| (u_0,u_1) \|_{\H^s} + T_0^\ta \Big( \| v \|_{L_{T_0}^q L_x^r}^p + \| \Psi \|_{C_{T_0} W_x^{\frac 12 - \al - \eps, \infty}}^p \Big),
\end{split}
\label{vH1}
\end{equation}
where $\dl > 0$, $\eps > 0$ are sufficiently small, $\ta > 0$, and $q, r$ are as defined in the statement of Proposition \ref{PROP:LWP1}. Here, due to Lemma \ref{LEM:regPsi}, we can fix a good $\o \in \O$ such that $\Psi = \Psi(\o) \in C([0,T_0]; W^{\frac 12 - \al - \eps, \infty}(\T^2))$ for $\al < \frac 12$ and sufficiently small $\eps > 0$, so that we know from \eqref{vH1} that $\| \vec v (t) \|_{\H^1} < \infty$. A standard argument then shows that $\vec v \in C((0, T_0]; \H^1(\T^2))$. Thus, our main goal is to control the $\H^1$-norm of $\vec v (t)$ on $[0,T]$ by bounding the energy $E(\vec v)$ defined in \eqref{defE}.

For the following computation, we need to work with the smooth solution $(v_N, \dt v_N)$ to the truncated equation with initial data $(\pi_N v_0, \pi_N v_1)$, where $\pi_N$ is the frequency truncation onto the frequencies $\{ |n| \leq N \}$. After establishing an upper bound for $E(\vec v(t))$ with the implicit constant independent of $N$, we can take $N \to \infty$ by using Proposition \ref{PROP:LWP1} (specifically, the continuous dependence of a solution on the initial data). Here, we omit details and work with $(v, \dt v)$ instead for simplicity. See, for example, \cite{OP16} for a standard argument.

\subsection{Case $1 < p \leq 3$}
\label{SUBSEC:GWP1}

In this case, we follow the globalization argument by Burq-Tzvetkov \cite{BT14}. For simplicity of notation, we set $E(t) = E(\vec v(t))$.

Given $T > 0$, we fix $0 < t \leq T$. By \eqref{defE}  and \eqref{SvNLW_res}, we have
\begin{equation}
\begin{split}
\dt E(t) &= \int_{\T^2} \dt v \big( \dt^2 v + (1-\Dl) v + |v|^{p-1}v \big) dx \\
&\leq -\int_{\T^2} \dt v\big( |v + \Psi|^{p-1}(v + \Psi) - |v|^{p-1}v \big) dx.
\end{split}
\label{E1}
\end{equation}

\noi
Let $F(u) = |u|^{p-1}u$, so that we can compute $F'(u) = p|u|^{p-1}$. Thus, by the fundamental theorem of calculus, we have
\begin{equation}
\begin{split}
|v + \Psi|^{p-1}(v + \Psi) - |v|^{p-1}v &= F(v+\Psi) - F(v) \\
&= \Psi \int_0^1 F'(v + \tau \Psi) d\tau \\
&\les |\Psi| |v|^{p-1} + |\Psi|^p.
\end{split}
\label{E2}
\end{equation}

\noi
Combining \eqref{E1} and \eqref{E2} and then applying the Cauchy-Schwartz inequality, we obtain
\begin{equation}
\begin{split}
\dt E(t) &\les \| \Psi \|_{L_x^\infty} \int_{\T^2} |\dt v| |v|^{p-1} dx + \int_{\T^2} |\dt v| |\Psi|^p dx \\
&\leq \| \Psi \|_{L_x^\infty} \bigg( \int_{\T^2} (\dt v)^2 dx \bigg)^{\frac 12} \bigg( \int_{\T^2} |v|^{2(p-1)} dx \bigg)^{\frac 12} + \| \Psi \|_{L_x^\infty}^{2p} \bigg( \int_{\T^2} (\dt v)^2 dx \bigg)^{\frac 12} \\
&\leq C(\Psi) E(t),
\end{split}
\label{E3}
\end{equation}

\noi
as long as $2(p-1) \leq p+1$, or equivalently, $p \leq 3$. By Gronwall's inequality on \eqref{E3}, we get
\[ E(t) \les e^{C(\Psi) t} \]
for any $0 < t \leq T$.

\subsection{Case $3 < p \leq 5$}
\label{SUBSEC:GWP2}

In this case, we follow the idea introduced by Oh-Pocovnicu \cite{OP16}. See also \cite{SX, LM2, MPTW} for similar arguments. In this setting, we let $\al < \frac{2}{p-1} - \frac 12$, the reason of which will become clear in the following steps.

By \eqref{defE}, \eqref{SvNLW_res}, and Taylor's theorem, we have
\begin{equation}
\begin{split}
\dt E(t) &= \int_{\T^2} \dt v \big( \dt^2 v + (1-\Dl) v + |v|^{p-1}v \big) dx \\
&= -\int_{\T^2} \dt v \big( |v + \Psi|^{p-1} (v + \Psi) - |v|^{p-1} v \big) dx - \int_{\T^2} \big( D^{\frac 12} \dt v \big)^2 dx \\
&\leq -p \int_{\T^2} \dt v \cdot |v|^{p-1} \Psi dx - \frac{p(p-1)}{2} \int_{\T^2} \dt v \cdot |v + \ta \Psi|^{p-3} (v + \ta \Psi) \Psi^2 dx \\
&=: A_1 + A_2,
\end{split}
\label{A1A2}
\end{equation}

\noi
where $\ta \in (0,1)$. To estimate $A_2$, by the Cauchy-Schwartz inequality and  Cauchy's inequality, we have
\begin{equation}
\begin{split}
|A_2| &\les \int_{\T^2} |\dt v| \big( |v|^{p-2} \Psi^2 + \Psi^p \big) dx \\
&\les \bigg( \int_{\T^2} (\dt v)^2 dx \bigg)^{1/2} \bigg( \| \Psi \|_{L_x^{\infty}}^4 \int_{\T^2} |v|^{2(p-2)} dx + \| \Psi \|_{L_x^{2p}}^{2p} \bigg)^{1/2} \\
&\les (1 + \| \Psi \|_{L_x^{\infty}}^4) E(t) + \| \Psi \|_{L_x^{2p}}^p,
\end{split}
\label{A2}
\end{equation}

\noi
where in the last inequality, we need $2(p-2) \leq p+1$, which is equivalent to $p \leq 5$. To estimate $A_1$, for $0 < t_1 \leq t_2 \leq T$, by integration by parts and Young's inequality, we have
\begin{equation}
\begin{split}
\int_{t_1}^{t_2} A_1 dt' &= - \int_{t_1}^{t_2} \int_{\T^2} \dt (|v|^{p-1}v) \Psi dx dt' \\
&= - \int_{\T^2} |v(t_2)|^{p-1} v(t_2) \Psi(t_2) dx + \int_{\T^2} |v(t_1)|^{p-1} v(t_1) \Psi(t_1) dx \\
&\hphantom{X}\, + \int_{t_1}^{t_2} \int_{\T^2} |v|^{p-1} v (\dt\Psi) dx dt' \\
&\les \eps \| v(t_2) \|_{L_x^{p+1}}^{p+1} + \frac{1}{\eps} \| \Psi(t_2) \|_{L_x^{p+1}}^{p+1} + \| v(t_1) \|_{L_x^{p+1}}^{p+1} + \| \Psi(t_1) \|_{L_x^{p+1}}^{p+1} \\
&\hphantom{X}\, + \int_{t_1}^{t_2} \int_{\T^2} |v|^{p-1} v (\dt\Psi) dx dt',
\end{split}
\label{A1a}
\end{equation}

\noi
where $0 < \eps < 1$. We see in Remark \ref{REM:dPsi} that $\dt \Psi \in C([0,T]; W^{-\frac 12 - \al -, \infty} (\T^2))$. By duality, H\"older's inequality, Lemma \ref{LEM:chain} (i), and Lemma \ref{LEM:Gag}, we obtain
\begin{equation}
\begin{split}
\int_{t_1}^{t_2} &\int_{\T^2} |v|^{p-1} v (\dt\Psi) dx dt' \\
&= \int_{t_1}^{t_2} \int_{\T^2} \jb{\nb}^{\frac 12+\al+} (|v|^{p-1}v) \jb{\nb}^{-\frac 12-\al -} (\dt \Psi) dx dt' \\
&\les \int_{t_1}^{t_2} \| v \|_{L_x^{p+1}}^{p-1} \big\| \jb{\nb}^{\frac 12+\al +} v(t')\big\|_{L_x^{\frac{p+1}{2}}} \big\| \jb{\nb}^{-\frac 12-\al -} (\dt\Psi)(t') \big\|_{L_x^\infty} dt' \\
&\les \| \dt\Psi \|_{C_TW_x^{-\frac 12 - \al -, \infty}} \int_{t_1}^{t_2} E(t')^{\frac{p-1}{p+1}} \| \jb{\nb} v \|_{L_x^2}^{\frac{2}{p-1}} \| v \|_{L_x^{p+1}}^{\frac{p-3}{p-1}} dt' \\
&\les \| \dt\Psi \|_{C_TW_x^{-\frac 12 - \al -, \infty}} \int_{t_1}^{t_2} E(t') dt',
\end{split}
\label{A1b}
\end{equation}

\noi
where we require that
\[ \frac 12 + \al + = \frac{2}{p-1},\]
which is equivalent to $\al < \frac{2}{p-1} - \frac 12$.
By combining \eqref{A1A2}, \eqref{A2}, \eqref{A1a}, and \eqref{A1b}, we have
\[E(t_2) \leq (1+ C_1(\Psi)) \int_{t_1}^{t_2} E(t') dt' + C_2(\Psi, v(t_1)). \]
By Gronwall's inequality, we get
\[E(t) \les e^{C(\Psi) t}\]
for any $0 < t \leq T$.

\subsection{Case $p > 5$}
\label{SUBSEC:GWP3}

In this case, we follow the idea by Latocca \cite{Lat}. In this setting, we also let $\al < \frac{2}{p-1} - \frac 12$.

We need  the following lemma to close the energy estimates in the Gronwall argument. We define $\b_p := \lceil \frac{p-3}{2} \rceil$, $F(u) := |u|^{p-1}u$, and $s_p := \frac{p-3}{p-1}$.

\begin{lemma}\label{LEM:Bes_bd}
For any $0 < t \leq T$ and every integer $1 \leq k \leq \b_p$, we have
\begin{align*}
\bigg| \int_{\T^2} F^{(k-1)}(v(t)) \Psi(t)^{k-1} &\dt\Psi(t) dx \bigg| \\
&\les g\big( \| \Psi \|_{L^\infty([0,T]; X)}, \| \jb{\nb}^{-1} \dt\Psi \|_{L^\infty([0,T]; Y)} \big) (1+E(t)),
\end{align*}

\noi
where $g$ is a polynomial with positive coefficients, and
\[X := L^\infty (\T^2) \cap B_{\frac{p+1}{2},1}^{1-s_p} (\T^2) \quad \text{and} \quad Y := L^\infty (\T^2) \cap B_{\infty,1}^{s_p} (\T^2).\]
\end{lemma}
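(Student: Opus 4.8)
The plan is to estimate the integral by duality against the pairing $\langle\nabla\rangle^{s_p}(\dt\Psi)$ versus $\langle\nabla\rangle^{-s_p+1}(\dt\Psi)$ — more precisely, to split derivatives so that the factor $F^{(k-1)}(v)\Psi^{k-1}$ absorbs $1-s_p$ derivatives while $\dt\Psi$ is measured with $-(1-s_p) = s_p - 1$ regularity after pulling out one $\langle\nabla\rangle^{-1}$; this is why the spaces $X$ and $Y$ carry Besov smoothness exactly $1-s_p$ and $s_p$ respectively. First I would write
\[
\int_{\T^2} F^{(k-1)}(v)\,\Psi^{k-1}\,\dt\Psi\,dx
= \int_{\T^2} \langle\nabla\rangle^{1-s_p}\big(F^{(k-1)}(v)\,\Psi^{k-1}\big)\, \langle\nabla\rangle^{s_p-1}(\dt\Psi)\,dx,
\]
bound it by Hölder with exponents $\big(\tfrac{p+1}{p-1}, \tfrac{p+1}{2}, \infty\big)$ in the form
\[
\les \|F^{(k-1)}(v)\|_{L_x^{(p+1)/(p-1)}}^{\,\ast}\;\big\|\langle\nabla\rangle^{1-s_p}\big(F^{(k-1)}(v)\Psi^{k-1}\big)\big\|_{B_{(p+1)/2,1}^{0}}\;\big\|\langle\nabla\rangle^{s_p-1}\dt\Psi\big\|_{L_x^\infty},
\]
and then note $\|\langle\nabla\rangle^{s_p-1}\dt\Psi\|_{L^\infty} = \|\langle\nabla\rangle^{-(1-s_p)}(\langle\nabla\rangle^{-1}\dt\Psi)\cdot\langle\nabla\rangle^{1-s_p}\|\lesssim \|\langle\nabla\rangle^{-1}\dt\Psi\|_{B_{\infty,1}^{s_p}}$ up to the Besov embedding from the $\l^1$ summability, which is exactly controlled by the $Y$-norm. (Here $1-s_p = \tfrac{2}{p-1}$, matching the derivative-counting $\tfrac12+\al+ = \tfrac{2}{p-1}$ of the sub-quintic case, so the choice $\al < \tfrac{2}{p-1}-\tfrac12$ is what makes the $\dt\Psi$ factor land in $L^\infty_x$ after losing $s_p-1$ derivatives.)

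Next, the heart of the matter is the paraproduct/chain-rule estimate for $\big\|\langle\nabla\rangle^{1-s_p}(F^{(k-1)}(v)\,\Psi^{k-1})\big\|_{B_{(p+1)/2,1}^{0}}$, or rather the cleaner target $\|F^{(k-1)}(v)\Psi^{k-1}\|_{B_{(p+1)/2,1}^{1-s_p}}$. I would apply Corollary \ref{COR:bilin} repeatedly: each time a factor of $\Psi$ is peeled off, it costs a factor $\|\Psi\|_{L^\infty}$ or $\|\Psi\|_{B_{(p+1)/2,1}^{1-s_p}}$ — both controlled by the $X$-norm — and reduces to estimating $\|F^{(k-1)}(v)\|_{B_{(p+1)/2,1}^{1-s_p}}$. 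For this I would use the Besov chain rule Lemma \ref{LEM:chain}(ii) with $F^{(k-1)}(u)$, which is (up to constants) $|u|^{p-k}$, so that $F^{(k-1)}$ is $C^1$ precisely when $k \le \b_p = \lceil\tfrac{p-3}{2}\rceil$ — hence the restriction on $k$. This gives
\[
\|F^{(k-1)}(v)\|_{B_{(p+1)/2,1}^{1-s_p}} \les \|v\|_{B_{p_1,1}^{1-s_p}}\,\big\||v|^{p-k-1}\big\|_{L_x^{p_2}}
\]
for suitable Hölder-conjugate $p_1,p_2$, and then interpolate (Lemma \ref{LEM:Gag}) between $\|\langle\nabla\rangle v\|_{L^2}$ and $\|v\|_{L^{p+1}}$ to convert everything into powers of $E(t)$. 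The exponent bookkeeping should be arranged so the total power of $E(t)$ is exactly $1$ — just as in \eqref{A1b} — with all remaining powers being powers of the $\Psi$-norms, collected into the polynomial $g$. The regularity $1-s_p$ fed into $v$ is $\tfrac{2}{p-1} < 1$, so the chain rule applies and the interpolation exponent $\theta$ lands in $(0,1)$; this is the same arithmetic that made the sub-quintic argument work.

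The main obstacle I expect is the combinatorial and interpolation bookkeeping when $k$ ranges over $1,\dots,\b_p$: for each $k$ one must verify that the Hölder exponents in the Leibniz expansion of $F^{(k-1)}(v)\Psi^{k-1}$ are admissible (all $\ge 1$, summing correctly), that Lemma \ref{LEM:chain}(ii) applies to $|u|^{p-k}$ with the relevant $r = p-k+1 \ge 2$ — which needs $k \le p-1$, automatic here — and that after Gagliardo–Nirenberg the net homogeneity in $E(t)$ is at most linear, uniformly in $k$. The borderline case is $k = \b_p$, where $p-k$ can be as small as $1$ (when $p$ is odd) making $F^{(k-1)}$ merely Lipschitz rather than $C^1$; here one either invokes the odd-integer algebraic structure or, for general $p$, uses that $\lceil\tfrac{p-3}{2}\rceil$ was chosen so that $p - \b_p \ge 1$ with the chain rule Lemma \ref{LEM:chain}(ii) still valid for $r = p-\b_p+1 \ge 2$. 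Once the $k=\b_p$ endpoint is handled, the intermediate $k$ are strictly easier, and summing the finitely many contributions $1 \le k \le \b_p$ produces the stated bound with a single polynomial $g$ in the two norms.
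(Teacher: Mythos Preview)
Your overall strategy—distribute $1-s_p$ derivatives onto $F^{(k-1)}(v)\Psi^{k-1}$ via duality, then use Corollary \ref{COR:bilin}, the Besov chain rule, and Gagliardo--Nirenberg—is exactly the paper's approach. The gap is in the choice of target space for the product: you aim for $B_{(p+1)/2,1}^{1-s_p}$, but the paper uses $B_{1,\infty}^{1-s_p}$, and both the Lebesgue exponent and the $\ell^q$ index matter critically. With Lebesgue exponent $\tfrac{p+1}{2}$ on the product, the Leibniz split from Corollary \ref{COR:bilin} forces either $\|F^{(k-1)}(v)\|_{L^\infty}$ (uncontrolled by $E$) or a H\"older partner for $\Psi^{k-1}$ with exponent $q_2$ satisfying $\tfrac{1}{q_2}=\tfrac{2}{p+1}-\tfrac{p-k+1}{p+1}<0$ for every $k\le \b_p$. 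Likewise, after the chain rule you land on $\|v\|_{B_{c_1,q}^{1-s_p}}$ with $\tfrac{1}{c_1}\le \tfrac{2}{p+1}-\tfrac{p-k}{p+1}<0$; so the exponent bookkeeping you flag as ``the main obstacle'' actually fails, not merely gets messy. The paper's choice of $L^1$-based Besov for the product is what makes all these H\"older partners land in $[1,\infty]$.

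The $\ell^1$ summability on the $v$-side is a second, independent problem. To feed $\|v\|_{B_{c_1,1}^{1-s_p}}$ into Gagliardo--Nirenberg you must first pass to a Sobolev norm, and by Lemma \ref{LEM:Bes}(ii) this costs an $\eps$ of regularity: $W^{1-s_p+\eps,c_1}\hookrightarrow B_{c_1,1}^{1-s_p}$. The paper's Gagliardo--Nirenberg step is already saturated at $k=1$ (the total power of $E$ is exactly $\tfrac{p-k+2}{p+1}=1$), so any $\eps>0$ pushes the power above $1$ and breaks Gronwall. The fix is to pair $B_{1,\infty}^{1-s_p}$ with $B_{\infty,1}^{s_p}$—i.e., place the $\ell^1$ burden on $\langle\nabla\rangle^{-1}\dt\Psi$ (this is the reason $Y$ carries the $B_{\infty,1}^{s_p}$ norm) and keep $\ell^\infty$ on the $v$-side, so that $\|v\|_{B_{(p+1)/2,\infty}^{1-s_p}}\les \|v\|_{W^{1-s_p,(p+1)/2}}$ with no loss. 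Two minor points: the chain rule produces $\||v|^{p-k}\|$, not $\||v|^{p-k-1}\|$; and you will also need to treat the low frequencies ($j\le 2$) separately, since the $B_{1,\infty}\times B_{\infty,1}$ duality via Littlewood--Paley only cleanly handles the high-frequency part.
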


Note that given $\al < \frac{2}{p-1} - \frac 12$, by Lemma \ref{LEM:regPsi}, Remark \ref{REM:dPsi}, and Lemma \ref{LEM:Bes} (ii), we have
\[ g\big( \| \Psi \|_{L^\infty([0,T]; X)}, \| \jb{\nb}^{-1} \dt\Psi \|_{L^\infty([0,T]; Y)} \big) < \infty \]
almost surely.

\medskip
Let us first assume Lemma \ref{LEM:Bes_bd} and work on the energy bound. As in the case when $p > 3$, we can compute that for $0 < t \leq T$,
\begin{equation}
\dt E(t) \leq -\int_{\T^2} \dt v (F(v + \Psi) - F(v)) dx.
\label{dtE}
\end{equation}

\noi
For our convenience we compute that for $k \in \Z_+$,
\[ F^{(k)} (u) = \begin{cases}
C_{p,k} |u|^{p-k-1}u & \text{for $k$ even}, \\
C_{p,k} |u|^{p-k} & \text{for $k$ odd}.
\end{cases} \]
By Taylor's formula at the point $v(t,x)$ with integral remainder up to the order $\b_p = \lceil \frac{p-3}{2} \rceil$, we have
\[ F(v + \Psi) - F(v) = \sum_{k=1}^{\b_p} \frac{1}{k!} F^{(k)}(v) \Psi^k + \int_v^{v+\Psi} \frac{F^{(\b_p + 1)}(\tau)}{\b_p!} (v+\Psi-\tau)^{\b_p} d\tau. \]
Let $0 < t_1 \leq t_2 \leq T$. By integrating \eqref{dtE} from $t_1$ to $t_2$, we can write
\begin{equation}
E(t_2) \leq E(t_1) + \sum_{k=1}^{\b_p} C_k I_k + C_p R,
\label{Et}
\end{equation}

\noi
where
\[ I_k := -\int_{t_1}^{t_2} \int_{\T^2} \dt v F^{(k)}(v) \Psi^k dx dt'  \quad \text{for} \quad 1 \leq k \leq \b_p, \]
\[ R := -\int_{t_1}^{t_2} \int_{\T^2} \int_v^{v+\Psi} \dt v F^{(\b_p+1)}(\tau) (v+\Psi-\tau)^{\b_p} d\tau dx dt'. \]

We first estimate $R$. Note that for $\tau \in [v, v+\Psi]$, we have
\[ |F^{(\b_p+1)}(\tau)| \les |v|^{p-\b_p-1} + |\Psi|^{p-\b_p-1}. \]
Thus, by H\"older's inequality and Young's inequality, we have
\begin{equation}
\begin{split}
R &\les \int_{t_1}^{t_2} \int_{\T^2} \dt v (|v|^{p-\b_p-1} |\Psi|^{\b_p+1} + |\Psi|^p) dx dt' \\
&\les \int_{t_1}^{t_2} \| \dt v(t') \|_{L_x^2} \| v(t') \|_{L_x^{p+1}}^{p-\b_p-1} \| \Psi(t') \|_{L_x^{r_p(\b_p+1)}}^{\b_p+1} dt' + \int_{t_1}^{t_2} \| \dt v(t') \|_{L_x^2}^2 dt' \\
&\hphantom{X}\, + \| \Psi \|_{L_T^{2p} L_x^{2p}}^{2p} \\
&\leq \| \Psi \|_{L_T^{2p} L_x^{2p}}^{2p} + \bigg( 1 + \| \Psi \|_{L_T^\infty L_x^{r_p(\b_p+1)}}^{\b_p+1} \bigg) \int_{t_1}^{t_2} \max\bigg\{ E(t'), E(t')^{\frac 12 + \frac{p-\b_p-1}{p+1}} \bigg\} dt',
\end{split}
\label{defR}
\end{equation}

\noi
where $r_p$ satisfies $\tfrac 12 + \tfrac{p-\b_p-1}{p+1} + \frac{1}{r_p} = 1$. Since $\b_p = \lceil \frac{p-3}{2} \rceil \geq \frac{p-3}{2}$, we have $\frac{p-\b_p-1}{p+1} \leq \frac 12$, so that
\[ R \les \| \Psi \|_{L_T^{2p} L_x^{2p}}^{2p} + \bigg( 1 + \| \Psi \|_{L_T^\infty L_x^{r_p(\b_p+1)}}^{\b_p+1} \bigg) \int_{t_1}^{t_2} (1+E(t')) dt'. \]

We now estimate $I_k$. By Fubini's theorem and integration by parts in time, we have
\begin{equation}
\begin{split}
|I_k| &= \bigg| -\int_{\T^2} \int_{t_1}^{t_2} \dt (F^{(k-1)}(v)) \Psi^k dt' dx \bigg| \\
&\leq \bigg| \int_{\T^2} F^{(k-1)}(v(t_2)) \Psi^k(t_2) dx \bigg| + \bigg| \int_{\T^2} F^{(k-1)}(v(t_1)) \Psi^k(t_1) dx \bigg| \\
&\hphantom{X}\, + \bigg| k \int_{\T^2} \int_{t_1}^{t_2} F^{(k-1)}(v(t')) \Psi(t')^{k-1} \dt\Psi(t') dt' dx \bigg| \\
&\les \int_{\T^2} |v(t_2)|^{p-k+1} |\Psi(t_2)|^k + |v(t_1)|^{p-k+1} |\Psi(t_1)|^k dx \\
&\hphantom{X}\, + \bigg| \int_{t_1}^{t_2} \int_{\T^2} F^{(k-1)}(v(t')) \Psi(t')^{k-1} \dt\Psi(t') dx dt' \bigg| \\
&= J_k + K_k.
\end{split}
\label{Ik}
\end{equation}

\noi
To handle $J_k$, by H\"older's inequality and Young's inequality, we obtain
\begin{equation}
\begin{split}
J_k &\leq E(t_2)^{\frac{p-k+1}{p+1}} \| \Psi(t_2) \|_{L_x^{p+1}}^k + E(t_1)^{\frac{p-k+1}{p+1}} \| \Psi(t_1) \|_{L_x^{p+1}}^k \\
&\leq \eps E(t_2) + C_1 E(t_1) + C_2 \| \Psi \|_{L_T^\infty L_x^{p+1}}^{p+1},
\end{split}
\label{Jk}
\end{equation}

\noi
where $0 < \eps < 1$. To deal with $K_k$, by Lemma \ref{LEM:Bes_bd}, 
\begin{equation}
K_k \les g\big( \| \Psi \|_{L^\infty([0,T]; X)}, \| \jb{\nb}^{-1} \dt\Psi \|_{L^\infty([0,T]; Y)} \big) \bigg( 1+\int_{t_1}^{t_2} E(t') dt' \bigg). 
\label{Kk}
\end{equation}

\noi
By combining \eqref{Et}, \eqref{defR}, \eqref{Ik}, \eqref{Jk}, \eqref{Kk}, we obtain
\begin{align*}
E(t_2) &\les \bigg( 1+\| \Psi \|_{L_T^\infty L_x^{r_p(\b_p+1)}}^{\b_p+1} + g\big( \| \Psi \|_{L^\infty([0,T]; X)}, \| \jb{\nb}^{-1} \dt\Psi \|_{L^\infty([0,T]; Y)} \big) \bigg) \\
&\hphantom{X}\, \times \bigg( 1+\int_{t_1}^{t_2} E(t') dt' \bigg) + \| \Psi \|_{L_T^{2p} L_x^{2p}}^{2p} + \| \Psi \|_{L_T^\infty L_x^{p+1}}^{p+1} + E(t_1).
\end{align*}

\noi
We can then use Gronwall's inequality to get the desired bound.

\medskip
We now provide the proof of Lemma \ref{LEM:Bes_bd}.

\begin{proof}[Proof of Lemma \ref{LEM:Bes_bd}]
Recall that $s_p = \frac{p-3}{p-1}$. We first consider the case when $k \geq 2$. By the Fourier-Plancherel theorem, we have
\begin{equation}
\begin{split}
\bigg| \int_{\T^2} F^{(k-1)}(v(t)) &\Psi(t)^{k-1} \dt\Psi(t) dx \bigg| \\
&= \bigg| \sum_{j'=-1}^1 \sum_{j \geq 0} \int_{\T^2} \P_j (F^{(k-1)}(v(t)) \Psi(t)^{k-1}) \P_{j+j'} (\dt\Psi(t)) dx \bigg| \\
&\les \sum_{j > 2} \int_{\T^2} |\P_j (F^{(k-1)}(v(t)) \Psi(t)^{k-1})| |\P_j (\dt\Psi(t))| dx \\
&\hphantom{X} + \sum_{j = 0}^2 \int_{\T^2} |\P_j (F^{(k-1)}(v(t)) \Psi(t)^{k-1})| |\P_j (\dt\Psi(t))| dx \\
&=: I_1 + I_2.
\end{split}
\label{I1I2}
\end{equation}

\noi
Let $r_k := \frac{(k-1)(p+1)}{k}$. To estimate $I_2$, by H\"older's inequality, Bernstein's inequality, and Young's inequality, we have
\begin{equation*}
\begin{split}
I_2 &\les \| \Psi(t) \|_{L_x^{r_k}}^{k-1} \| v(t) \|_{L_x^{p+1}}^{p-k+1} \sum_{j=0}^2 \| \P_j \dt\Psi(t) \|_{L_x^\infty} \\
&\les \| \Psi(t) \|_{L_x^{r_k}}^{k-1} \| \jb{\nb}^{-1} \dt\Psi(t) \|_{L_x^\infty} E(t)^{\frac{p-k+1}{p+1}} \\
&\les E(t) + \| \Psi \|_{L_T^\infty L_x^{r_k}}^{r_k} \| \jb{\nb}^{-1} \dt\Psi \|_{L_T^\infty  L_x^\infty}^{\frac{p+1}{k}}.
\end{split}
\end{equation*}

\noi
It remains to estimate $I_1$. By H\"older's inequality, Bernstein's inequality, and then H\"older's inequality for series,
\begin{align*}
I_1 &\les \sum_{j > 2} 2^{j(1-s_p)} \| \P_j (F^{(k-1)}(v(t)) \Psi(t)^{k-1}) \|_{L_x^1} 2^{j s_p} \| \P_j (\jb{\nb}^{-1}\dt\Psi(t)) \|_{L_x^\infty} \\
&\leq \| F^{(k-1)}(v(t)) \Psi(t)^{k-1} \|_{B_{1,\infty}^{1-s_p}} \| \jb{\nb}^{-1}\dt\Psi \|_{B_{\infty,1}^{s_p}}.
\end{align*}

\noi
Then, by Corollary \ref{COR:bilin}, we have
\begin{equation}
\begin{split}
\| F^{(k-1)}(v(t)) &\Psi(t)^{k-1} \|_{B_{1,\infty}^{1-s_p}} \les \|  F^{(k-1)} (v(t)) \|_{B_{\frac{p+1}{p+2-k}, \infty}^{1-s_p}} \| \Psi(t)^{k-1} \|_{L^{\frac{p+1}{k-1}}} \\
&\hphantom{X}\, + \| |v(t)|^{p-k+1} \|_{L^{\frac{p+1}{p-k+1}}} \| \Psi(t)^{k-1} \|_{B_{p_k,\infty}^{1-s_p}} \\
&\les \| F^{(k-1)} (v(t)) \|_{B_{\frac{p+1}{p+2-k}, \infty}^{1-s_p}} \| \Psi(t) \|_{L^{p+1}}^{k-1} + E(t)^{\frac{p-k+1}{p+1}} \| \Psi(t)^{k-1} \|_{B_{p_k,\infty}^{1-s_p}},
\end{split}
\label{I1-1}
\end{equation}

\noi
where $p_k$ satisfies $\frac{1}{p_k} + \frac{p-k+1}{p+1} = 1$. By Lemma \ref{LEM:chain} (ii), we have
\begin{equation}
\begin{split}
\| \Psi(t)^{k-1} \|_{B_{p_k,\infty}^{1-s_p}} &\les \| \Psi(t) \|_{B_{p_k,\infty}^{1-s_p}} \| \Psi(t) \|_{L^\infty}^{k-2} \\
&\leq \| \Psi(t) \|_{B_{\frac{p+1}{2},\infty}^{1-s_p}} \| \Psi(t) \|_{L^\infty}^{k-2}.
\end{split}
\label{I1-2}
\end{equation} 

\noi
By Lemma \ref{LEM:chain} (ii), Lemma \ref{LEM:Bes} (ii), and Lemma \ref{LEM:Gag}, we have
\begin{align*}
\|  F^{(k-1)} (v(t)) \|_{B_{\frac{p+1}{p+2-k}, \infty}^{1-s_p}} &\les \| v(t) \|_{B_{\frac{p+1}{2}, \infty}^{1-s_p}} \big\| |v(t)|^{p-k} \big\|_{L^{\frac{p+1}{p-k}}} \\
&\les \| v(t) \|_{W^{1-s_p, \frac{p+1}{2}}} E(t)^{\frac{p-k}{p+1}} \\
&\les \| \jb{\nb} v(t) \|_{L^2}^{1-\b} \| v(t) \|_{L^{p+1}}^\b E(t)^{\frac{p-k}{p+1}},
\end{align*}

\noi
where $\b \in [0,s_p]$ satisfies $\frac{2}{p+1} = \frac{1-s_p}{2} + \frac{\b}{p+1}$, and so $\b = \frac{p-3}{p-1} = s_p$. Thus, we obtain

\begin{equation}
\| F^{(k-1)} (v(t)) \|_{B_{\frac{p+1}{p+2-k}, \infty}^{1-s_p}} \les E(t)^{\frac{1-\b}{2} + \frac{\b}{p+1} + \frac{p-k}{p+1}} = E(t)^{\frac{2}{p+1} + \frac{p-k}{p+1}} \les 1 + E(t).
\label{I1-3}
\end{equation}

\noi
By combining \eqref{I1-1}, \eqref{I1-2}, and \eqref{I1-3}, we obtain the desired bound for $I_1$.

For the case when $k=1$, after \eqref{I1I2}, we have the estimate $I_2 \les E(t) + \| \jb{\nb}^{-1} \dt\Psi \|_{L_T^\infty  L_x^\infty}^{p+1}$. For the term $I_1$, by the estimate in \eqref{I1-3}, we have
\[ I_1 \les \| F(v(t)) \|_{B_{1,\infty}^{1-s_p}} \| \jb{\nb}^{-1} \dt\Psi \|_{B_{\infty,1}^{s_p}} \les \| \jb{\nb}^{-1} \dt\Psi \|_{B_{\infty,1}^{s_p}} (1+E(t)), \]
as desired.
\end{proof}

\appendix

\section{On local well-posedness of subcritical vNLW}
\label{SEC:LWP3}

In this appendix, we aim to show that the deterministic viscous NLW is locally well-posed in $\H^s(\T^2)$ with $s \geq s_{\text{crit}}$, where we recall that $s_{\text{crit}}$ is defined by
\begin{equation}
s_{\text{crit}} := \max\bigg( 1-\frac{2}{p-1}, 0 \bigg).
\label{scrit}
\end{equation}

\noi
More precisely, we prove local well-posedness of the following subcritical vNLW:
\begin{equation}
\begin{cases}
\dt^2 u + (1-\Dl) u + D \dt u \pm |u|^{p-1}u = 0 \\
(u, \dt u)|_{t=0} = (u_0, u_1),
\end{cases}
\label{vNLWcrit}
\end{equation}

\noi
where $(u_0, u_1) \in \H^s(\T^2)$ and $s \geq s_{\text{crit}}$ (with a strict inequality when $p = 3$). To achieve this, we will need the inhomogeneous Strichartz estimates for the linear viscous wave equation on $\T^2$.

\subsection{The inhomogeneous Strichartz estimates}

In this subsection, we prove the Strichartz estimates for the inhomogeneous linear viscous wave equation on $\T^d$. To achieve this, we first establish the following estimate for the linear operator $S(t)$ defined in \eqref{defS}.

\begin{lemma}\label{LEM:Sest}
Let $1 \leq p \leq 2 \leq q \leq \infty$. Then, we have
\[ \| S(t) \phi \|_{L^q(\T^d)} \les t^{1-d(\frac{1}{p} - \frac{1}{q})} \| \phi \|_{L^p(\T^d)} \]
for any $0 < t \leq 1$
\end{lemma}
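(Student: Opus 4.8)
The plan is to exploit the parabolic smoothing in $S(t) = e^{-\frac{D}{2}t}\,\frac{\sin(t\jbb{D})}{\jbb{D}}$ by writing the Poisson operator as a composition $e^{-\frac{D}{2}t} = e^{-\frac{D}{4}t}\circ e^{-\frac{D}{4}t} = P(t/2)\circ P(t/2)$, and distributing one half to carry $\phi$ from $L^p$ into $L^2$ and the other half to carry the output from $L^2$ into $L^q$, applying the Schauder estimate of Lemma~\ref{LEM:Sch} (with $\b = 0$) to each half. The oscillatory factor $\frac{\sin(t\jbb{D})}{\jbb{D}}$ is kept intact throughout and estimated purely on $L^2$.

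First I would observe that, since $\jbb{n} = (1+\tfrac34|n|^2)^{1/2} \ge 1$ and $|\sin(t\jbb{n})| \le t\,\jbb{n}$, the Fourier multiplier $\frac{\sin(t\jbb{D})}{\jbb{D}}$ is a self-adjoint operator on $L^2(\T^d)$ with eigenvalues $\jbb{n}^{-1}\sin(t\jbb{n})$, each of modulus $\le t$; hence
\[ \Big\| \tfrac{\sin(t\jbb{D})}{\jbb{D}}\,\psi \Big\|_{L^2(\T^d)} \le t\,\|\psi\|_{L^2(\T^d)} . \]
Since every operator in sight is a Fourier multiplier, they commute, so we may write
\[ S(t)\phi = P(t/2)\Big( \tfrac{\sin(t\jbb{D})}{\jbb{D}}\, P(t/2)\phi \Big) . \]
The lemma then follows by composing three bounds. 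For $1 \le p \le 2$ and $0 < t \le 1$, Lemma~\ref{LEM:Sch} with $\b = 0$ (applied at time $t/2 \le 1$, and using $(t/2)^{-a}\sim t^{-a}$) gives $\|P(t/2)\phi\|_{L^2(\T^d)} \les t^{-d(\frac1p-\frac12)}\|\phi\|_{L^p(\T^d)}$; the $L^2$ bound above costs a factor $t$; and for $2 \le q \le \infty$, Lemma~\ref{LEM:Sch} with $\b = 0$ gives $\|P(t/2)\psi\|_{L^q(\T^d)} \les t^{-d(\frac12-\frac1q)}\|\psi\|_{L^2(\T^d)}$. Multiplying the three factors yields
\[ \| S(t)\phi \|_{L^q(\T^d)} \les t^{-d(\frac12-\frac1q)}\cdot t\cdot t^{-d(\frac1p-\frac12)} \|\phi\|_{L^p(\T^d)} = t^{\,1 - d(\frac1p - \frac1q)} \|\phi\|_{L^p(\T^d)} , \]
which is the claimed estimate.

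There is no serious obstacle here; the one point that must not be overlooked is that one should \emph{not} expand $\sin(t\jbb{D}) = \tfrac{1}{2i}(e^{it\jbb{D}} - e^{-it\jbb{D}})$, because splitting off the two half-wave propagators destroys the cancellation responsible for the full power of $t$ (leaving only the much weaker gain $\jbb{D}^{-1}$, which for $t\le1$ is no gain at all). Keeping the sine together and estimating it on $L^2$, where its operator norm is controlled by exactly $t$, is what makes the argument work, and it is also what forces the hypothesis $p \le 2 \le q$: the middle factor must act from $L^2$ to $L^2$, so the first Poisson half has to land in $L^2$ (hence $p \le 2$) and the second has to start from $L^2$ (hence $q \ge 2$).
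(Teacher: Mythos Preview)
Your proof is correct and essentially identical to the paper's own argument: the paper also splits $e^{-\frac{D}{2}t} = e^{-\frac{D}{4}t}\circ e^{-\frac{D}{4}t}$, applies the Schauder estimate (Lemma~\ref{LEM:Sch}) once to go $L^p\to L^2$ and once to go $L^2\to L^q$, and uses the bound $\big\|\tfrac{\sin(t\jbb{D})}{\jbb{D}}\psi\big\|_{L^2}\le t\|\psi\|_{L^2}$ in between. Your additional commentary on why one should not split the sine into exponentials and why the hypothesis $p\le 2\le q$ arises is accurate and helpful but not needed for the proof itself.
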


\begin{proof}
By \eqref{defS} and applying the Schauder estimate (Lemma \ref{LEM:Sch}) twice, we obtain
\begin{align*}
\| S(t) \phi \|_{L^q(\T^d)} &= \bigg\| e^{-\frac{D}{4}t} \frac{\sin(t\jbb{D})}{\jbb{D}} e^{-\frac{D}{4}t} \phi \bigg\|_{L^q(\T^d)} \\
&\les t^{-d(\frac 12 - \frac{1}{q})} \bigg\| \frac{\sin(t\jbb{D})}{\jbb{D}} e^{-\frac{D}{4}t} \phi \bigg\|_{L^2(\T^d)} \\
&\leq t^{1-d(\frac 12 - \frac{1}{q})} \big\| e^{-\frac{D}{4}t} \phi \big\|_{L^2(\T^d)} \\
&\les t^{1-d(\frac{1}{p} - \frac{1}{q})} \| \phi \|_{L^p(\T^d)},
\end{align*}
as desired.
\end{proof}

We now establish the Strichartz estimates for the inhomogeneous linear viscous wave equation on $\T^d$. We say that $u$ is a solution to the following inhomogeneous linear viscous wave equation:
\begin{equation}
\begin{cases}
\dt^2 u + (1 - \Dl) u + D \dt u = f \\
(u, \dt u) |_{t=0} = (\phi_0, \phi_1),
\end{cases}
\label{ivLW}
\end{equation}

\noi
if $u$ satisfies the following Duhamel formulation:
\[ u(t) = V(t)(\phi_0, \phi_1) + \int_0^t S(t-t') f(t') dt', \]
where $V(t)$ and $S(t)$ are as defined in \eqref{defV} and \eqref{defS}, respectively.

\begin{lemma}\label{LEM:str}
Given $s \geq 0$, suppose that $1 < \wt q \leq 2 < q < \infty$, $1 \leq \wt r \leq 2 \leq r \leq \infty$ satisfy the following scaling condition:
\begin{equation}
\frac{1}{q} + \frac{d}{r} = \frac{d}{2}-s = \frac{1}{\wt q} + \frac{d}{\wt r} - 2.
\label{str_cond}
\end{equation}
Then, a solution $u$ to the inhomogeneous linear viscous wave equation \eqref{ivLW} satisfies the following inequality:
\begin{equation}
\| (u, \dt u) \|_{C_T \H^s_x (\T^d)} + \| u \|_{L_T^q L_x^r (\T^d)} \les \| (\phi_0, \phi_1) \|_{\H^s (\T^d)} + \| f \|_{L_T^{\wt q} L_x^{\wt r} (\T^d)},
\label{str}
\end{equation}
for all $0 < T \leq 1$.
\end{lemma}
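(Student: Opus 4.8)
The plan is to build the estimate \eqref{str} out of three ingredients: (a) the homogeneous Strichartz estimate already established in Lemma \ref{LEM:hom_str} to control the term $V(t)(\phi_0,\phi_1)$ in $L_T^q L_x^r$; (b) boundedness of $V(t)$ in $\H^s$ and of the Duhamel operator $F \mapsto \int_0^t S(t-t')F(t')\,dt'$ in $C_T\H^s_x$, which follows from the Schauder-type bound (Lemma \ref{LEM:Sch}) together with the explicit form of $S(t)$, $\dt S(t)$, and $V(t)$ in \eqref{defV}–\eqref{defS}; and (c) a $TT^*$-type argument for the purely inhomogeneous piece $\int_0^t S(t-t')f(t')\,dt'$ in $L_T^q L_x^r$, which is the genuinely new estimate. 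Since $\| V(t)(\phi_0,\phi_1)\|_{C_T\H^s_x} \les \|(\phi_0,\phi_1)\|_{\H^s}$ and $\| V(t)(\phi_0,\phi_1)\|_{L_T^q L_x^r} \les \|(\phi_0,\phi_1)\|_{\H^s}$ are already in hand, and since the $C_T\H^s_x$ bound on the Duhamel term reduces via Minkowski to $\int_0^T \|S(t-t')f(t')\|_{\H^s}\,dt' \les \int_0^T \| f(t')\|_{H^{s-1}_x}\,dt'$ — which is weaker than what we need and is not what \eqref{str} claims — I should instead carry the inhomogeneous term directly in the mixed norm. So the heart of the matter is the estimate
\[
\bigg\| \int_0^t S(t-t') f(t')\,dt' \bigg\|_{L_T^q L_x^r} \les \| f \|_{L_T^{\wt q} L_x^{\wt r}}.
\]

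To prove this I would first reduce to $s = 0$ exactly as in the proof of Lemma \ref{LEM:hom_str}: by Sobolev embedding and the scaling relations \eqref{str_cond}, one can trade derivatives for integrability and assume the scaling-critical $L^q L^r \leftarrow L^{\wt q} L^{\wt r}$ pairing with $s=0$. Then, writing $S(t) = e^{-\frac D4 t}\frac{\sin(t\jbb D)}{\jbb D} e^{-\frac D4 t}$ and applying Lemma \ref{LEM:Sch} to each Poisson factor, I bound the kernel of the map $f \mapsto \int_0^t S(t-t')f(t')\,dt'$ pointwise in $t,t'$ by $|t-t'|^{1 - d(\frac1{\wt r} - \frac1r)}$ times an $L_x^{\wt r}\to L_x^r$-type gain — precisely the content of Lemma \ref{LEM:Sest} above with $(p,q) = (\wt r, r)$. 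This gives
\[
\bigg\| \int_0^t S(t-t') f(t')\,dt' \bigg\|_{L_x^r} \les \int_0^T |t-t'|^{1 - d(\frac1{\wt r}-\frac1r)} \| f(t') \|_{L_x^{\wt r}}\,dt',
\]
and by \eqref{str_cond} the exponent is $1 - d(\frac1{\wt r}-\frac1r) = \frac1q - \frac1{\wt q} + 1 = -(\frac1{\wt q'} - \frac1q)$, so the $t$-integral is a fractional integration operator of the exact order for which the one-dimensional Hardy–Littlewood–Sobolev inequality sends $L_T^{\wt q}$ to $L_T^q$ (using $\wt q \le 2 \le q$ and $\frac1{\wt q} - \frac1q < 1$, which is exactly the admissible range in the hypotheses). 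Extending $f$ by zero outside $[0,T]$ makes the convolution over all of $\R$, so HLS applies cleanly. Finally, the $C_T\H^s_x$ control of $(u,\dt u)$ for the inhomogeneous part follows from Lemma \ref{LEM:lin2} (estimates \eqref{lin3}–\eqref{lin4}) — valid for $s \le 1$, hence in particular for the range of $s$ relevant to subcritical vNLW — together with the embedding $L_x^{\wt r} \hookrightarrow H_x^{s-1}$ on $\T^d$ guaranteed by \eqref{str_cond}; combining all pieces yields \eqref{str}.

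The main obstacle I anticipate is purely bookkeeping: verifying that the chain of scaling identities in \eqref{str_cond} makes the time exponent $1 - d(\frac1{\wt r}-\frac1r)$ land exactly in the HLS-admissible window $(-1,0)$ — i.e. that $0 < \frac1{\wt q} - \frac1q < 1$ — and that the endpoint cases ($\wt r = 1$, or $r = \infty$, or $\wt q = 2$) are either excluded by the strict inequalities in the hypotheses or handled separately; the dissipative smoothing from the Poisson kernels is what buys us the extra room compared to the classical wave Strichartz estimates, so no Keel–Tao endpoint machinery is needed. Everything else is a direct assembly of Lemma \ref{LEM:Sch}, Lemma \ref{LEM:hom_str}, Lemma \ref{LEM:Sest}, Lemma \ref{LEM:lin2}, and the scalar HLS inequality.
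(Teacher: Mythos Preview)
Your plan for the purely inhomogeneous piece
\[
\bigg\| \int_0^t S(t-t') f(t')\,dt' \bigg\|_{L_T^q L_x^r} \les \| f \|_{L_T^{\wt q} L_x^{\wt r}}
\]
is correct and matches what the paper does (it cites Lemma~3.5 of \cite{KC1}, which is exactly your Lemma~\ref{LEM:Sest} + HLS argument). The homogeneous pieces are also handled the same way.

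The gap is in your treatment of the $C_T\H^s_x$ bound for the Duhamel term. You propose to use Lemma~\ref{LEM:lin2} together with the embedding $L_x^{\wt r}(\T^d) \hookrightarrow H_x^{s-1}(\T^d)$, claiming the latter is ``guaranteed by \eqref{str_cond}''. It is not. By duality this embedding is equivalent to $H_x^{1-s} \hookrightarrow L_x^{\wt r'}$, which by Sobolev requires $\frac{d}{\wt r} \le \frac{d}{2} + 1 - s$. But the scaling condition \eqref{str_cond} gives $\frac{d}{\wt r} = \frac{d}{2} - s + 2 - \frac{1}{\wt q}$, so the embedding would force $\frac{1}{\wt q} \ge 1$, i.e.\ $\wt q \le 1$, contradicting the hypothesis $\wt q > 1$. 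In other words, the naive $L_T^1 H_x^{s-1}$ route (which you yourself flagged as ``weaker than what we need'') cannot be rescued by a spatial embedding alone; the missing factor is precisely a gain in \emph{time} integrability of order $1 - \frac{1}{\wt q}$.

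The paper supplies this gain by a Littlewood--Paley argument: after localizing $f$ to $|n|\sim 2^j$, the Poisson factor in $S(t-t')$ contributes $e^{-c\,2^j(t-t')}$, so that H\"older in $t'$ produces $\big(\int_0^t |(t-t')e^{-c\,2^j(t-t')}|^{\wt q'}dt'\big)^{1/\wt q'} \sim 2^{-j(1+1/\wt q')}$, and combined with the $2^{js}$ from the $H^s$ weight and the $2^{jd(1/\wt r - 1/2)}$ loss from Hausdorff--Young, the total exponent of $2^j$ is exactly $s + \frac{d}{\wt r} - \frac{d}{2} + \frac{1}{\wt q} - 2 = 0$ by \eqref{str_cond}. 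This frequency-dependent exponential decay is the mechanism you are missing; without it the $C_T\H^s_x$ estimate \eqref{str4}--\eqref{str5} does not close.
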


\begin{proof}
By \eqref{defV}, we have
\begin{equation}
\big\| \big(V(t)(\phi_0,\phi_1), \dt V(t)(\phi_0,\phi_1)\big) \big\|_{C_T \H^s_x (\T^d)} \les \| (\phi_0, \phi_1) \|_{\H^s (\T^d)}.
\label{str1}
\end{equation}

\noi
By Lemma \ref{LEM:hom_str}, we have
\begin{equation}
\| V(t)(\phi_0, \phi_1) \|_{L_T^q L_x^r (\T^d)} \les \| (\phi_0, \phi_1) \|_{\H^s (\T^d)}.
\label{str2}
\end{equation}

\noi
We then use Lemma 3.5 in \cite{KC1} (which is in the $\R^d$ setting, but the proof also works in the $\T^d$ setting with Lemma \ref{LEM:Sest} in hand) to obtain
\begin{equation}
\bigg\| \int_0^t S(t-t') f(t') dt' \bigg\|_{L_T^q L_x^r (\T^d)} \les \| f \|_{L_T^{\wt q} L_x^{\wt r} (\T^d)}.
\label{str3}
\end{equation}

\noi
It remains to show
\begin{equation}
\bigg\| \int_0^t S(t-t') f(t') dt' \bigg\|_{C_T H_x^s (\T^d)} \les \| f \|_{L_T^{\wt q} L_x^{\wt r} (\T^d)}
\label{str4}
\end{equation}
and
\begin{equation}
\bigg\| \dt \int_0^t S(t-t') f(t') dt' \bigg\|_{C_T H_x^{s-1} (\T^d)} \les \| f \|_{L_T^{\wt q} L_x^{\wt r} (\T^d)},
\label{str5}
\end{equation}
so that \eqref{str} follows from \eqref{str1}, \eqref{str2}, \eqref{str3}, \eqref{str4}, and \eqref{str5}.

To show that the inequality \eqref{str4} holds, we use the Littlewood-Paley decomposition as in Lemma 3.6 in \cite{KC1}. In view of the proof of Lemma 3.6 in \cite{KC1}, we know that it suffices to show \eqref{str4} for all $f$ such that $\ft f$ is supported in $\{ n \in \Z^d: 2^{j-1} \leq |n| \leq 2^{j+1} \}$ for all $j \in \Z_+$ (the case for $\{ n \in \Z^d: 0 \leq |n| \leq 2\}$ follows in a similar manner) with the underlying constant independent of $j$. Fix $0 < t < T$. By Minkowski's integral inequality, H\"older's inequality in $n$, Hausdorff-Young inequality, H\"older's inequality in $t'$ (along with the fact that the number of lattice points inside a ball of radius $R$ in $\R^d$ is $O(R^d)$), and a change of variable, we have
\begin{align*}
\bigg\| \int_0^t S(t-t') &f(t') dt' \bigg\|_{C_T H_x^s} \\
&\les \int_0^t \bigg( \sum_{n \in \Z^2 } |n|^{2s} \bigg| e^{-\frac{|n|}{2} (t-t')} \frac{\sin((t-t') \jbb{n})}{\jbb{n}} \ft f (t', n) \bigg|^2 \bigg)^{1/2} dt' \\
&\les 2^{(j+1)s} \int_0^t (t-t') e^{2^{j-2}(t-t')} \bigg( \sum_{n \in \Z^2} \big|\ft f (t', n)\big|^2 \bigg)^{1/2} dt' \\
&\les 2^{(j+1)s} \int_0^t (t-t') e^{2^{j-2}(t-t')} \big( 2^{(j+1)d} \big)^{\frac{\wt r' - 2}{2\wt r '}} \big\| \ft f (t', n) \big\|_{\ell_n^{\wt r '}} dt' \\
&\les 2^{(j+1) (s + \frac{d}{\wt r} - \frac{d}{2})} \bigg( \int_0^t \Big| (t-t') e^{2^{j-2}(t-t')} \Big|^{\wt q '} dt' \bigg)^{1/\wt q '} \| f \|_{L_T^{\wt q} L_x^{\wt r}} \\
&\les 2^{(j+1) (s + \frac{d}{\wt r} - \frac{d}{2} + \frac{1}{\wt q} - 2)} \| f \|_{L_T^{\wt q} L_x^{\wt r}}.
\end{align*}

\noi
By using the second equality in the scaling condition \eqref{str_cond}, we obtain the desired inequality with the underlying constant independent of $j$, and so the inequality \eqref{str4} follows. The inequality \eqref{str5} follows in a similar manner.
\end{proof}

\begin{remark}\rm
As in the case of the homogeneous Strichartz estimates (Lemma \ref{LEM:hom_str}), the Strichartz estimates for the inhomogeneous linear viscous wave equation on $\T^d$ also hold for a larger class of pairs $(q,r)$ and $(\wt q, \wt r)$ compared to the Strichartz estimates for the usual linear wave equations \cite{GV, LS, KT, GKO}. Again, this is due to the parabolic smoothing effect. Note that this is also true on $\R^d$ (see \cite{KC1}).
\end{remark}

We complete this subsection by making the following observation. Recall that we are considering the viscous NLW on $\T^2$ with nonlinearity $|u|^{p-1}u$ for $p>1$. Suppose that we can find pairs $(q,r)$ and $(\wt q, \wt r)$ satisfying the scaling condition $\eqref{str_cond}$ such that
\[ q > p \wt q \qquad \text{and} \qquad r \geq p \wt r. \] 
Then, by H\"older's inequality and the fact that $|\T^2| = 1$, we have
\[ \big\| |u|^{p-1}u \big\|_{L_T^{\wt q} L_x^{\wt r}} \leq T^{\frac{1}{\wt q} - \frac{p}{q}} \| u \|^p_{L_T^q L_x^r}. \]
Note that the power of $T$ is positive when $q > p \wt q$. The following lemma shows that there exist such pairs $(q,r)$ and $(\wt q, \wt r)$.

\begin{lemma}\label{LEM:qr}
Let $s_{\text{crit}}$ be as defined in \eqref{scrit}. Given $s_{\text{crit}} < s < 1$, there exist $1 < \wt q \leq 2 < q < \infty$, $1 \leq \wt r \leq 2 \leq r \leq \infty$ satisfying the scaling condition \eqref{str_cond} such that
\begin{equation}
q > p \wt q \qquad \text{and} \qquad r \geq p \wt r.
\label{qr_cond}
\end{equation}
\end{lemma}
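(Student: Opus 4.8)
The plan is to produce the four exponents by an explicit formula that exploits the multiplicative structure of the two scaling relations in \eqref{str_cond}. First I would take as the ``source'' pair
\[
\wt q := \frac{2}{2-s}, \qquad \wt r := \frac{4}{4-s}.
\]
Since $0<s<1$ (because $s>s_{\text{crit}}\geq 0$), one has $\tfrac{1}{\wt q}=\tfrac{2-s}{2}\in(\tfrac12,1)$ and $\tfrac{1}{\wt r}=\tfrac{4-s}{4}\in(\tfrac12,1)$, so $1<\wt q\le 2$ and $1\le\wt r\le 2$; moreover $\tfrac{1}{\wt q}+\tfrac{2}{\wt r}=\tfrac{2-s}{2}+\tfrac{4-s}{2}=3-s$, which is the second equality in \eqref{str_cond}.

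Next I would introduce the dilation factor
\[
\kappa := \frac{3-s}{p(1-s)}
\]
and set $q:=p\kappa\,\wt q$ and $r:=p\kappa\,\wt r$. The one computation that matters is that $\kappa>1$ is equivalent to $3-s>p(1-s)$, i.e.\ to $s(p-1)>p-3$, i.e.\ to $s>\tfrac{p-3}{p-1}=1-\tfrac{2}{p-1}=s_{\text{scaling}}$; and the hypothesis $s>s_{\text{crit}}=\max(s_{\text{scaling}},0)$ gives both $s>s_{\text{scaling}}$ (hence $\kappa>1$) and $s>0$ (hence $p\kappa=\tfrac{3-s}{1-s}>3$, since $\tfrac{3-s}{1-s}>3\Leftrightarrow 2s>0$).

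With these choices the remaining checks will be immediate. The first equality in \eqref{str_cond} holds because
\[
\frac1q+\frac2r=\frac{1}{p\kappa}\Big(\frac{1}{\wt q}+\frac{2}{\wt r}\Big)=\frac{1-s}{3-s}\,(3-s)=1-s ;
\]
the gap condition \eqref{qr_cond} holds because $q=\kappa\,(p\wt q)>p\wt q$ and $r=\kappa\,(p\wt r)>p\wt r$, using $\kappa>1$; and the range constraints hold because $q=p\kappa\,\wt q>3\cdot 1>2$ and $r=p\kappa\,\wt r>3>2$ (using $p\kappa>3$, $\wt q>1$, $\wt r\ge 1$), while $q,r<\infty$ since $1-s>0$. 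This yields all assertions of the lemma.

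I do not expect a genuine obstacle here: the content of the lemma is precisely that the multiplicative slack demanded by \eqref{qr_cond} is available exactly when $s>s_{\text{scaling}}$, and once one realizes that this slack can be realized by a common dilation $q/\wt q=r/\wt r=p\kappa$ of an admissible source pair, everything reduces to the elementary inequality $\kappa>1$. The only point needing a little care is to choose the source pair $(\wt q,\wt r)$ so that the dilated pair $(q,r)$ still lands in $2<q<\infty$, $2\le r\le\infty$; the explicit choice above does this uniformly in $s\in(s_{\text{crit}},1)$ and $p>1$, with no case distinction.
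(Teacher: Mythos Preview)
Your proof is correct and follows essentially the same approach as the paper: both arguments realize the required slack via a common dilation $q/\wt q = r/\wt r = \frac{3-s}{1-s}$, and the key inequality $\frac{3-s}{1-s}>p$ is exactly $s>1-\frac{2}{p-1}$. The only difference is the choice of source pair: the paper takes $\wt q=\dl$ with $\dl>1$ close to $1$ (citing Lemma~3.3 in \cite{GKO}), whereas you take the explicit $\wt q=\frac{2}{2-s}$, $\wt r=\frac{4}{4-s}$, which makes the argument fully self-contained and avoids the ``$\dl$ sufficiently close to $1$'' caveat.
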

\begin{proof}
In view of Lemma 3.3 in \cite{GKO}, given $0 < s < 1$, we have
\[ \min \bigg( \frac{q}{\wt q}, \frac{r}{\wt r} \bigg) \leq \frac{3-s}{1-s}, \]
and the equality holds by taking, for example, 
\begin{equation}
(q, r) = \bigg( \frac{3-s}{1-s}\dl, \frac{2}{1-s-\frac{1-s}{(3-s)\dl}} \bigg) \qquad \text{and} \qquad (\wt q, \wt r) = \bigg( \dl, \frac{2}{3-s-\frac{1}{\dl}} \bigg),
\label{qr}
\end{equation}
where $\dl = \dl(s) > 1$ is sufficiently close to 1. Moreover, we note that $\frac{3-s}{1-s} > p$ if and only if $s > 1 - \frac{2}{p-1}$. Thus, as long as $s_{\text{crit}} < s < 1$, there exist pairs $(q,r)$ and $(\wt q, \wt r)$ that satisfy $\eqref{qr_cond}$.
\end{proof}

\begin{remark}\label{REM:qr}\rm
In the case when $p > 3$ and $s = s_{\text{crit}} = 1 - \frac{2}{p-1} > 0$, we have
\[ \min \bigg( \frac{q}{\wt q}, \frac{r}{\wt r} \bigg) \leq \frac{3-s}{1-s} = p, \]
so that we can only find pairs $(q,r)$ and $(\wt q, \wt r)$ that satisfy $q = p \wt q$ and $r = p \wt r$ instead of $q > p \wt q$ and $r \geq p \wt r$. Such pairs do exist. One can take, for example, $(q,r)$ and $(\wt q, \wt r)$ as in \eqref{qr}.

In the case when $1 < p \leq 3$ and $s = s_{\text{crit}} = 0$, there does not exist any pair $(\wt q, \wt r)$ that satisfies $1 < \wt q \leq 2$, $1 \leq \wt r \leq 2$, and the scaling condition \eqref{str_cond} (with $d = 2$) simultaneously. In this case, the inhomogeneous Strichartz estimates (Lemma \ref{LEM:str}) no longer applies, so that an alternative approach is needed to deal with this case.
\end{remark}

\subsection{Local well-posedness of subcritical vNLW}

In this subsection, we prove the following theorem for the local well-posedness result of vNLW \eqref{vNLWcrit}.

\begin{theorem}\label{THM:LWP3}
Let $p > 1$ and let $s_{\text{crit}}$ be as in \eqref{scrit}. Then, $\eqref{vNLWcrit}$ is locally well-posed in $\H^s(\T^2)$ for 
\begin{center}
\textup{(i)} $p \neq 3$: $s \geq s_{\text{crit}}$ \qquad or \qquad \textup{(ii)} $p = 3$: $s > s_{\text{crit}}$.
\end{center}
More precisely, given any $(u_0, u_1) \in \H^s(\T^2)$, there exists $0 < T = T(u_0, u_1) \leq 1$ and a unique solution $\vec u = (u, \dt u)$ to \eqref{vNLWcrit} in the class
\[ (u, \dt u) \in C([0,T]; \H^s (\T^2)) \quad \text{and} \quad u \in L^q([0,T]; L^r(\T^2)), \]
for some suitable $q,r \geq 2$.
\end{theorem}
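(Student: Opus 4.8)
The plan is to solve \eqref{vNLWcrit} by a contraction mapping argument applied to the Duhamel map
\[
\G(u)(t) := V(t)(u_0,u_1) \mp \int_0^t S(t-t')\big(|u|^{p-1}u\big)(t')\,dt'
\]
on a space of the form $C([0,T];\H^s(\T^2)) \cap L^q([0,T];L^r(\T^2))$, with the auxiliary pair $(q,r)$ chosen according to the regime. The range $s\ge 1$ can be handled by a standard argument (the fractional chain rule of Lemma \ref{LEM:chain} together with the smoothing of $S(t)$), so the substance of the theorem lies in $s_{\text{crit}}\le s<1$, which I would split into three cases. In the main case $s_{\text{crit}}<s<1$, I would invoke Lemma \ref{LEM:qr} to fix pairs $(q,r)$ and $(\wt q,\wt r)$ satisfying the scaling condition \eqref{str_cond} together with the strict gain $q>p\wt q$ and $r\ge p\wt r$. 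The inhomogeneous Strichartz estimate (Lemma \ref{LEM:str}) then gives
\[
\big\|(\G(u),\dt\G(u))\big\|_{C_T\H^s_x\cap L^q_TL^r_x} \les \|(u_0,u_1)\|_{\H^s} + \big\||u|^{p-1}u\big\|_{L^{\wt q}_TL^{\wt r}_x} \les \|(u_0,u_1)\|_{\H^s} + T^{\frac1{\wt q}-\frac pq}\,\|u\|_{L^q_TL^r_x}^p,
\]
where the last step is Hölder on $\T^2$ and the power $\frac1{\wt q}-\frac pq$ is strictly positive. Since $|u|^{p-1}u$ is not algebraic for general $p>1$, the difference estimate I would obtain via the fundamental theorem of calculus as in Oh--Okamoto--Pocovnicu \cite{OOP}: writing $|u|^{p-1}u-|w|^{p-1}w=(u-w)\int_0^1 F'\big(w+\tau(u-w)\big)\,d\tau$ with $|F'(z)|\les|z|^{p-1}$, Minkowski and Hölder produce the same power of $T$ together with a factor $\|u\|_X^{p-1}+\|w\|_X^{p-1}$. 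Choosing $T=T(\|(u_0,u_1)\|_{\H^s})$ small makes $\G$ a contraction on a ball of radius $\sim 1+\|(u_0,u_1)\|_{\H^s}$, and uniqueness is upgraded from the ball to the whole solution class by the usual continuity argument, exactly as in the proof of Proposition \ref{PROP:LWP1}.

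The case $s=s_{\text{crit}}=1-\frac2{p-1}>0$ (that is, $p>3$) is scaling-critical: by Remark \ref{REM:qr} the best admissible pairs only give $q=p\wt q$ and $r=p\wt r$, so there is no spare power of $T$. Instead of a power of $T$ I would exploit smallness of the free evolution: by Lemma \ref{LEM:hom_str} applied on $[0,1]$ we have $V(\cdot)(u_0,u_1)\in L^q([0,1];L^r_x)$, hence $\|V(\cdot)(u_0,u_1)\|_{L^q([0,T];L^r_x)}\to 0$ as $T\to 0$ by dominated convergence. Running the fixed point argument on a ball whose $L^q_TL^r_x$--radius is dictated by this vanishing quantity then closes the contraction, since the nonlinear term and the difference term are of degree $p$, respectively $p-1$, in that norm (the $C_T\H^s$ component being controlled directly by Lemma \ref{LEM:str}).

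The remaining case is $s=s_{\text{crit}}=0$ with $1<p\le 3$ and $p\ne 3$. Here, by Remark \ref{REM:qr}, there is no admissible pair $(\wt q,\wt r)$ for the inhomogeneous Strichartz estimate in $d=2$ at $s=0$, so I would instead use the homogeneous Strichartz estimate (Lemma \ref{LEM:hom_str}) together with the dissipative smoothing of $S(t)$ (Lemmas \ref{LEM:Sch} and \ref{LEM:Sest}). For $1<p<2$ this is verbatim the argument of Proposition \ref{PROP:LWP1} with $\Psi\equiv 0$: one works in $C_T\H^0\cap L^{2+\dl}_TL^{(4+2\dl)/(1+\dl)}_x$, places $|u|^{p-1}u$ in $L^1_TL^2_x$, and gains a positive power of $T$ by Hölder. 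For $2\le p<3$ one can no longer route through $L^1_TL^2_x$; instead I would measure the nonlinearity in $L^1_TL^{2/(p-1)}_x$ (note $\tfrac2{p-1}\in(1,2]$ in this range) and use Lemma \ref{LEM:Sest} to bound the Duhamel term both in $C_TL^2_x$ and in the auxiliary Strichartz norm $L^p_TL^{2p/(p-1)}_x$, which pairs with $s=0$ in \eqref{str_cond}; a direct computation of the resulting temporal exponents, combined with Young's inequality in time, shows that the strict inequality $p<3$ (equivalently $s=0>s_{\text{scaling}}=1-\tfrac2{p-1}$) is exactly what makes these exponents exceed $-\tfrac1p$ and hence yields a positive power of $T$. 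The difference estimates in both subcases are again handled by the fundamental theorem of calculus, and persistence of regularity of $(u,\dt u)$ in $C_T\H^s$ follows as usual.

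The routine subcritical case is essentially immediate given Lemmas \ref{LEM:str} and \ref{LEM:qr}; the two endpoints are where the work is. At $s=s_{\text{crit}}$ for $p>3$ the power of $T$ is lost and one must instead extract smallness from the free evolution on a short time interval, which is a genuinely critical argument. At $s=0$ for $2\le p<3$ the inhomogeneous Strichartz estimate degenerates, and the crux is to still squeeze a positive power of $T$ out of the dissipative smoothing of $S(t)$; this is possible precisely because $s=0$ lies strictly above the scaling-critical regularity in this range, which is also the reason the critical case $p=3$ must be excluded from part (i) of the theorem.
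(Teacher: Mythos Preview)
Your argument is correct and, for the subcritical range $s_{\text{crit}}<s<1$ and the critical case $p>3$ with $s=s_{\text{crit}}$, it coincides exactly with the paper's proof: Lemma~\ref{LEM:qr} plus Lemma~\ref{LEM:str} in the former, and the critical contraction driven by smallness of $\|V(\cdot)(u_0,u_1)\|_{L^q_TL^r_x}$ via dominated convergence in the latter.

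Where you diverge is the endpoint $s=s_{\text{crit}}=0$ for $1<p<3$. The paper does \emph{not} split this into $1<p<2$ and $2\le p<3$. Instead it runs a single argument on the Strichartz pair $(q,r)=(3,3)$ (which satisfies \eqref{hom_cond} at $s=0$ in $d=2$): place $|u|^{p-1}u$ in $L^1_TL^{1+}_x$, use the Sobolev embedding $L^{1+}(\T^2)\hookrightarrow H^{-1}(\T^2)$, and then invoke the homogeneous estimate Lemma~\ref{LEM:hom_str} (via Minkowski) to return to $L^3_TL^3_x$, gaining a positive power of $T$ from H\"older since $p<3$. This is shorter and avoids any exponent bookkeeping. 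Your route---Lemma~\ref{LEM:Sest} with target $L^{2/(p-1)}_x$ and Young's inequality in time for $2\le p<3$---also works (the temporal kernel exponent is $3-p>0$), but note a small wrinkle: to close the contraction you must also bound the free evolution $V(t)(u_0,u_1)$ in your auxiliary norm $L^p_TL^{2p/(p-1)}_x$, and Lemma~\ref{LEM:hom_str} requires $q>2$, so at $p=2$ you would need to perturb to $q=p+\dl$. The paper's $(3,3)$ choice sidesteps this entirely.
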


\begin{proof}
For the proof, we only consider the case $s < 1$. We first consider the case $s > s_{\text{crit}}$. We write \eqref{vNLWcrit} in the Duhamel formulation:
\begin{equation}
u(t) = \G(u) := V(t)(u_0,u_1) - \int_0^t S(t-t') F(u)(t') dt',
\label{LL1}
\end{equation}

\noi
where $F(u) = |u|^{p-1}u$, $V(t)$ is as defined in \eqref{defV}, and $S(t)$ is as defined in \eqref{defS}. Let $\vec \G(u) = (\G(u), \dt\G(u))$ and $\vec u = (u, \dt u)$. 

Let $(q,r)$ and $(\wt q, \wt r)$ be as given in Lemma \ref{LEM:qr}, which guarantees that $q > p\wt q$ and $r \geq p\wt r$. Given $0 < T \leq 1$, we define the space $\mathcal{Y}(T)$ as
\[ \mathcal{Y}^s(T) = \mathcal{Y}_1^s(T) \times \mathcal{Y}_2^s(T), \]
where
\begin{align*}
&\mathcal{Y}_1^s(T) := C([0,T]; H^s (\T^2)) \cap L^{q}([0,T]; L^{r}(\T^2)), \\
&\mathcal{Y}_2^s(T) := C([0,T]; H^{s-1} (\T^2)).
\end{align*}
Our goal is to show that $\vec \G$ is a contraction on a ball in $\mathcal{Y}^s(T)$ for some $0 < T \leq 1$.

By \eqref{LL1}, Lemma \ref{LEM:str}, and H\"older's inequality, we have
\begin{equation}
\begin{split}
\| \vec \G (u) \|_{\mathcal{Y}^s(T)} &\les \| (u_0, u_1) \|_{\H^s} + \big\| |u|^p \big\|_{L_T^{\wt q} L_x^{\wt r}} \\
&\les \| (u_0, u_1) \|_{\H^s} + T^\ta \| u \|_{L_T^q L_x^r}^p \\
&\les \| (u_0, u_1) \|_{\H^s} + T^\ta \| \vec u \|_{\mathcal{Y}^s(T)}^p
\end{split}
\label{LWPeq1}
\end{equation}

\noi
for some $\ta > 0$. 

For the difference estimate, we use the idea from Oh-Okamoto-Pocovnicu \cite{OOP}. Noticing that $F'(u) = p|u|^{p-1}$, we use \eqref{LL1}, Lemma \ref{LEM:str}, the fundamental theorem of calculus, Minkowski's integral inequality, and H\"older's inequality to obtain
\begin{align*}
\| \vec \G (u) - \vec \G (v) \|_{\mathcal{Y}^s(T)} &\les \| F(u) - F(v) \|_{L_T^{\wt q} L_x^{\wt r}} \\
&= \bigg\| \int_0^1 F'(v+\tau (u-v)) (u-v) d\tau \bigg\|_{L_T^{\wt q} L_x^{\wt r}} \\
&\les \int_0^1 \| v + \tau(u-v) \|_{L_T^{p\wt q} L_x^{p\wt r}}^{p-1} \| u-v \|_{L_T^{p\wt q} L_x^{p\wt r}} d\tau \\
&\les T^\ta \Big( \| u \|_{L^q_T L_x^r}^{p-1} + \| v \|_{L^q_T L_x^r}^{p-1} \Big) \| u-v \|_{L^q_T L_x^r} \\
&\les T^\ta \Big( \| \vec u \|_{\mathcal{Y}^s(T)}^{p-1} + \| \vec v \|_{\mathcal{Y}^s(T)}^{p-1} \Big) \| \vec u - \vec v \|_{\mathcal{Y}^s(T)}.
\end{align*}

\noi
for some $\ta > 0$.

Thus, by choosing $T = T ( \| (u_0,u_1) \|_{\H^s} ) > 0$ small enough, we obtain that $\vec \G$ is a contraction on the ball $B_R \subset \mathcal{Y}^s(T)$ of radius $R \sim 1 + \| (u_0,u_1) \|_{\H^s}$.

In the case when $p > 3$ and $s = s_{\text{crit}} = 1 - \frac{2}{p-1} > 0$, we can only find pairs $(q,r)$ and $(\wt q, \wt r)$ that satisfy $q = p \wt q$ and $r = p \wt r$ (see Remark \ref{REM:qr}). In this case, we modify the argument as follows. By \eqref{LL1}, Lemma \ref{LEM:str}, and H\"older's inequality, we obtain
\begin{align*}
\| \G (u) \|_{L_T^q L_x^r} &\les \| V(t) (u_0, u_1) \|_{L_T^q L_x^r} + \big\| |u|^p \big\|_{L_T^{\wt q} L_x^{\wt r}} \\
&\les \| V(t) (u_0, u_1) \|_{L_T^q L_x^r} + \| u \|_{L_T^{q} L_x^{r}}^p 
\end{align*}
for some $\ta > 0$ and sufficiently small $\eps > 0$. A difference estimate on $\G(u) - \G(v)$ also holds by a similar computation. By the dominated convergence theorem, we have $\| u \|_{L_T^{q} L_x^{r}}^p \to 0$ as $T \to 0$. Thus, we can choose $T = T(u_0, u_1) > 0$ sufficiently small such that $\| V(t) (u_0, u_1) \|_{L_T^q L_x^r} \leq \frac 12 \eta \ll 1$, so that we can show that $\G$ is a contraction on the ball of radius $\eta$ in $L_T^q L_x^r$. Moreover, \eqref{LWPeq1} gives
\[ \| \vec u \|_{C_T \H_x^s} = \| \vec \G(u) \|_{C_T \H_x^s} \les \| (u_0, u_1) \|_{\H^s} + \| u \|_{L_T^{q} L_x^{r}}^p < \infty, \]
so that $\vec u = (u, \dt u) \in C_T \H_x^s$.

Lastly, we consider the case when $1 < p < 3$ and $s = s_{\text{crit}} = 0$. Note that $s = 0$ along with the $L_T^3 L_x^3$ norm satisfies the scaling condition \eqref{hom_cond} in Lemma \ref{LEM:hom_str}. By \eqref{LL1}, Minkowski's integral inequality, Lemma \ref{LEM:hom_str}, Sobolev's inequality, and H\"older's inequality, we obtain
\begin{align*}
\| \G (u) \|_{L_T^3 L_x^3} &\les \| V(t) (u_0, u_1) \|_{L_T^3 L_x^3} + \int_0^T \big\| \ind_{[0,t]} (t') S(t-t') \big(|u|^{p-1}u \big) (t') \big\|_{L_T^3 L_x^3} dt' \\
&\les \| (u_0, u_1) \|_{\H^0} + \int_0^T \big\| \big( |u|^{p-1} u \big) (t') \big\|_{H_x^{-1}} dt' \\
&\les \| (u_0, u_1) \|_{\H^0} + \big\| |u|^p \big\|_{L_T^1 L_x^{1+}} \\
&\les \| (u_0, u_1) \|_{\H^0} + T^\ta \| u \|_{L_T^3 L_x^3}
\end{align*}
for some $\ta > 0$. Also, by \eqref{defV} and \eqref{defS}, we easily obtain
\[ \| \vec \G (u) \|_{C_T \H^0_x} \les \| (u_0, u_1) \|_{\H^0} + T^\ta \| u \|_{L_T^3 L_x^3}. \]
Similar difference estimates also hold, so that we can conclude using the standard contraction argument. This finishes the proof.
\end{proof}

We finish this appendix by stating several remarks.

\begin{remark}\label{REM:LWP3}\rm
(i) At this point, we do not know how to prove local well-posedness for the cubic vNLW (with $p = 3$) in $L^2(\T^2)$, i.e. with $s = s_{\text{crit}} = 0$. It would be of interest to investigate if spaces of functions of bounded $p$-variation (i.e. $U^p$- and $V^p$-spaces) such as those in \cite{BOP2, OOP} can be applied to handle the cubic case.

\smallskip \noi
(ii) A slight modification of the proof of Theorem \ref{THM:LWP3} yields local well-posedness of SvNLW \eqref{SvNLW} in $\H^s(\T^2)$ for all $s \geq s_{\text{crit}}$ (with a strict inequality when $p = 3$), which improves the local well-posedness result for SvNLW \eqref{SvNLW} in Theorem \ref{THM:LWP1}.

\smallskip \noi
(iii) One can compare the local well-posedness result for vNLW \eqref{vNLWcrit} in Theorem \ref{THM:LWP3} with the local well-posedness result for the usual NLW (see Remark 1.4 in \cite{GKO}):
\[ \dt^2 u - \Dl u \pm |u|^{p-1}u = 0. \]
Note that vNLW enjoys a better local well-posedness result than does the usual NLW, thanks to the parabolic smoothing effect.

\smallskip \noi
(iv) Note that the global well-posedness result of SvNLW \eqref{SvNLW} in Theorem \ref{THM:GWP1} easily gives global well-posedness of vNLW \eqref{vNLWcrit} in the class $\H^s(\T^2)$ for $s \geq \max (0, 1 - \frac{1}{p+\dl} - \frac 1p)$, where $\dl > 0$ is arbitrary. However, at this point, we do not know how to prove global well-posedness of vNLW \eqref{vNLWcrit} in $\H^s(\T^2)$ for $s_{\text{crit}} \leq s < \max (0, 1 - \frac{1}{p+\dl} - \frac 1p)$. The main difficulty for this range of $s$ is showing $\vec v(t) \in \H^1(\T^2)$ for all small enough $t > 0$, which is needed to guarantee the finiteness of the energy $E(\vec v)$ defined in \eqref{defE}.
\end{remark}

\begin{ackno}\rm
The author would like to thank his advisor, Tadahiro Oh, for suggesting this problem and for his support and advice throughout the whole work. The author is also grateful to Gyu Eun Lee and Guangqu Zheng for helpful suggestions and discussions. In addition, the author thanks the anonymous referees for the helpful comments. R.L. was supported by the European Research Council (grant no. 864138 ``SingStochDispDyn'').
\end{ackno}


\begin{thebibliography}{99}

\bibitem{BCD}
H.~Bahouri, J.Y.~Chemin, R. Danchin,
{\it Fourier Analysis and Nonlinear Partial Differential Equations},
Grundlehren der mathematischen Wissenschaften. Springer Berlin Heidelberg, 2011.

\bibitem{BO}
\'A.~B\'enyi, T.~Oh,
{\it The Sobolev inequality on the torus revisited},
Publ. Math. Debrecen, 83 (2013), no. 3, 359--374.


\bibitem{BOP2}
\'A.~B\'enyi, T.~Oh, O.~Pocovnicu,
{\it On the probabilistic Cauchy theory of the cubic nonlinear Schr\"odinger equation on $\R^3$, $d \geq 3$}, Trans. Amer. Math. Soc. Ser. B, 2 (2015), 1--50.



\bibitem{Bony}
J.-M.~Bony, 
{\it Calcul symbolique et propagation des singularit\'es pour les \'equations aux d\'eriv\'ees partielles non lin\'eaires},
Ann. Sci. \'Ecole Norm. Sup., 14 (1981), no. 2, 209--246.


\bibitem{BO96}
J.~Bourgain, 
{\it Invariant measures for the 2D-defocusing nonlinear Schr\"odinger equation}, 
Comm. Math. Phys., 176 (1996), no. 2, 421--445. 

\bibitem{BT08}
N.~Burq, N.~Tzvetkov,
{\it Random data Cauchy theory for supercritical wave equations. I. Local theory}, 
Invent. Math., 173 (2008), no. 3, 449--475.


\bibitem{BT14}
N.~Burq, N.~Tzvetkov,
{\it Probabilistic well-posedness for the cubic wave equation},
J. Eur. Math. Soc., 16 (2014), no. 1, 1--30.

\bibitem{CK}
R.~Carles, T.~Kappeler, 
{\it Norm-inflation with infinite loss of regularity for periodic NLS equations in negative Sobolev spaces}, Bull. Soc. Math. France, 145 (2017), no. 4, 623--642.

\bibitem{CP}
A.~Choffrut, O.~Pocovnicu, 
{\it Ill-posendess of the cubic nonlinear half-wave equation and other fractional NLS on the real line}, Int. Math. Res. Not. IMRN, (2018), no. 3, 699--738.

\bibitem{CCT}
M.~Christ, J.~Colliander, T.~Tao, 
{\it Ill-posedness for nonlinear Schr\"odinger and wave equations}, arXiv:0311048 [math.AP] (2003).


\bibitem{DPD}
G.~Da Prato, A.~Debussche, 
{\it Strong solutions to the stochastic quantization equations}, Ann. Probab., 31 (2003), no. 4, 1900--1916.

\bibitem{dO}
P.~de Roubin, M.~Okamoto, 
{\it Norm inflation for the viscous wave equations in negative Sobolev spaces}, in preparation.

\bibitem{FO}
J.~Forlano, M.~Okamoto, 
{\it A remark on norm inflation for nonlinear wave equations}, Dyn. Partial Differ. Equ., 17 (2020), no. 4, 361--381.


\bibitem{Gat}
A.E.~Gatto, 
{\it Product rule and chain rule estimates for fractional derivatives on spaces that satisfy the doubling condition}, J. Funct. Anal., 188 (2002), no. 1, 27--37.

\bibitem{GV}
J.~Ginibre, G.~Velo,
{\it Generalized Strichartz inequalities for the wave equation}, J. Funct. Anal., 133 (1995), 50--68.

\bibitem{GIP}
M.~Gubinelli, P.~Imkeller, N.~Perkowski,
{\it Paracontrolled distributions and singular PDEs},
Forum Math. Pi, 3 (2015), e6, 75 pp.

\bibitem{GKO}
M.~Gubinelli, H.~Koch, T.~Oh,
{\it  Renormalization of the two-dimensional stochastic nonlinear wave equations},
 Trans. Amer. Math. Soc.,
 370 (2018), no 10, 7335--7359.

\bibitem{GKO2}
M.~Gubinelli, H.~Koch, T.~Oh,
{\it Paracontrolled approach to the three-dimensional stochastic nonlinear wave equation with quadratic nonlinearity}, 
to appear in J. Eur. Math. Soc.


\bibitem{KT}
M.~Keel, T.~Tao,
{\it Endpoint Strichartz estimates}, Amer. J. Math., 120 (1998), no. 5, 955--980.

\bibitem{Kish19}
N.~Kishimoto, 
{\it A remark on norm inflation for nonlinear Schr\"odinger equations}, Commun. Pure Appl. Anal., 18 (2019), no. 3, 1375--1402.

\bibitem{KC1}
J.~Kuan, S.~\v{C}ani\'c, 
{\it Deterministic ill-posedness and probabilistic well-posedness of the viscous nonlinear wave equation describing fluid-structure interaction}, Trans. Amer. Math. Soc., 374 (2021), 5925--5994

\bibitem{KC2}
J.~Kuan, S.~\v{C}ani\'c, 
{\it A stochastically perturbed fluid-structure interaction problem modeled by a stochastic viscous wave equation}, 
J. Differential Equations 310 (2022), 45--98.

\bibitem{CKO}
J.~Kuan, T.~Oh,
{\it Probabilistic global well-posedness for a viscous nonlinear wave equation modeling fluid-structure interaction},
Appl. Anal. 101 (2022), no. 12, 4349--4373.

\bibitem{Lat}
M.~Latocca, 
{\it Almost sure existence of global solutions for supercritical semilinear wave equations}, Journal of Differential Equations, 273 (2021): 83--121.

\bibitem{LS}
H.~Lindblad, C.~Sogge, 
{\it On existence and scattering with minimal regularity for semilinear wave equations}, J. Funct. Anal., 130 (1995), 357--426.

\bibitem{LO}
R.~Liu, T.~Oh,
{\it On the two-dimensional singular stochastic viscous nonlinear wave equations},
C. R. Math. Acad. Sci. Paris 360 (2022), 1227--1248.


\bibitem{LM2}
J.~L\"uhrmann, D.~Mendelson,
{\it On the almost sure global well-posedness of energy sub-critical nonlinear wave equations on $\R^3$}, New York Journal of Mathematics, 22 (2016), 209--227.

\bibitem{McKean}
H.P.~McKean, 
{\it Statistical mechanics of nonlinear wave equations. IV. Cubic Schr\"odinger}, 
 Comm. Math. Phys., 168 (1995), no. 3, 479--491. 
 {\it Erratum: Statistical mechanics of nonlinear wave equations. IV. Cubic Schr\"odinger}, Comm. Math. Phys., 173 (1995), no. 3, 675.

\bibitem{MPTW}
R.~Mosincat, O.~Pocovnicu, L.~Tolomeo, Y.~Wang,
{\it Well-posedness theory for the three-dimensional stochastic nonlinear beam equation with additive space-time white noise forcing}, preprint.

\bibitem{MW}
J.-C.~Mourrat, H.~Weber,
{\it Global well-posedness of the dynamic $\Phi^4$ model in the plane},
Ann. Probab., 45 (2017), no. 4, 2398--2476.


\bibitem{Oh}
T.~Oh, 
{\it A remark on norm inflation with general initial data for the cubic nonlinear Schr\"odinger equations in negative Sobolev spaces}, Funkcial. Ekvac., 60 (2017), 259--277.

\bibitem{OOP}
T.~Oh, M.~Okamoto, O.~Pocovnicu,
{\it On the probabilistic well-posedness of the nonlinear Schr\"odinger equations with non-algebraic nonlinearities}, Discrete Contin. Dyn. Syst. A., 39 (2019), no. 6, 3479--3520.

\bibitem{OOTz}
T.~Oh, M.~Okamoto, N.~Tzvetkov,
{\it  Uniqueness and non-uniqueness of the Gaussian free field evolution under the two-dimensional Wick ordered cubic wave equation}, preprint.

\bibitem{OP16}
T.~Oh, O.~Pocovnicu,
{\it  Probabilistic global well-posedness of the energy-critical defocusing quintic nonlinear wave equation on $\R^3$}, J. Math. Pures Appl., 105 (2016), 342--366. 

\bibitem{OP17}
T.~Oh, O.~Pocovnicu,
{\it A remark on almost sure global well-posedness of the energy-critical defocusing nonlinear wave equations in the periodic setting}, 
Tohoku Math. J., 69 (2017), no. 3, 455--481.


\bibitem{OW}
T.~Oh, Y.~Wang, 
{\it On the ill-posedness of the cubic nonlinear Schr\"dinger equation on the circle}, An. \c{S}tiin\c{t}. Univ. Al. I. Cuza Ia\c{s}i. Mat. (N.S.), 64 (2018), no. 1, 53--84.

\bibitem{Oka}
M.~Okamoto, 
{\it Norm inflation for the generalized Boussinesq and Kawahara equations}, Nonlinear Anal., 157 (2017), 44--61.


\bibitem{Poc17}
O.~Pocovnicu,
{\it Probabilistic global well-posedness of the energy-critical defocusing cubic non-
linear wave equations on $\R^4$}, J. Eur. Math. Soc. (JEMS), 19 (2017), 2321--2375.

\bibitem{SX}
C.~Sun, B.~Xia,
{\it Probabilistic well-posedness for supercritical wave equations with periodic boundary condition on dimension three}, Illinois Journal of Mathematics, 60 (2016), no. 2, 481--503.

\bibitem{TAO}
T.~Tao,
{\it Nonlinear dispersive equations: Local and global analysis}, CBMS Regional Conference Series in Mathematics, vol. 106, Amer. Math. Soc., Providence, RI, 2006. xvi+373 pp.

\bibitem{Tz}
N.~Tzvetkov, 
{\it Random data wave equations}, in F. Flandoli, M. Gubinelli, and M. Hairer, editors, Singular random dynamics, volume 2253 of Lecture Notes in Mathematics, pages 221--313. Springer, 2019.



\end{thebibliography}
\end{document}